\numberwithin{equation}{section}  
\newtheorem{lem}{Lemma}[section]
\newtheorem{thm}[lem]{Theorem}
\newtheorem{prp}[lem]{Proposition}
\newtheorem{cor}[lem]{Corollary}
\theoremstyle{definition}
\newtheorem{dfn}[lem]{Definition}
\theoremstyle{remark}
\newtheorem{exm}[lem]{Example}
\newtheorem{rmk}[lem]{Remark}
\newtheorem*{problem}{Problem}
\newcommand{\Rmnum}[1]{\uppercase\expandafter{\romannumeral #1}}
\newcommand{\ii}{\sqrt{-1}}   
\newcommand{\ba}{\bar{\alpha}}   
\newcommand{\bb}{\bar{\beta}}
\newcommand{\bg}{\bar{\gamma}}
\newcommand{\bl}{\bar{\lambda}}
\newcommand{\bm}{\bar{\mu}}
\newcommand{\bn}{\bar{\nu}}
\newcommand{\tth}{\tilde{\theta}}
\newcommand{\la}{\langle}
\newcommand{\ra}{\rangle}
\begin{document}

\title{Rigidity Theorems for Complete Sasakian Manifolds with Constant Pseudo-Hermitian Scalar Curvature\footnote{Supported by NSFC grant No. 11271071 and LMNS, Fudan} \footnote{Key words: Rigidity theorem, Chern-Moser tensor,  CR Yamabe invariant, Sasakian manifold, pseudo-Einstein manifold, pseudo-Hermitian space form}
 \footnote{2010 Mathematics Subject Classification. Primary: 32V05. Secondary: 32V20, 53C24, 53C25}}
\author{Yuxin, Dong \and Hezi, Lin \and Yibin, Ren\footnote{Corresponding author}}

\maketitle

\begin{abstract}
The orthogonal decomposition of the Webster curvature provides us a way to characterize some canonical metrics on a pseudo-Hermitian manifold.  We derive some subelliptic differential inequalities from the Weitzenb\"ock formulas for the traceless pseudo-Hermitian Ricci tensor and the Chern-Moser tensor of Sasakian manifolds with constant pseudo-Hermitian scalar curvature and Sasakian pseudo-Einstein manifolds respectively. By means of either subelliptic estimates or maximum principle, some rigidity theorems are established to characterize Sasakian pseudo-Einstein manifolds among Sasakian manifolds with constant pseudo-Hermitian scalar curvature and Sasakian space forms among Sasakian pseudo-Einstein manifolds respectively.
\end{abstract}

\section{Introduction}
A fundamental result in Riemannian geometry is that the curvature tensor $Rm$ of an $n$ dimensional Riemannian manifold $(M^n,g)$ can be decomposed into three mutually orthogonal irreducible components:
$$
Rm=W + \frac{1}{n-2} E \owedge g + \frac{\rho}{2n (n-1)} g \owedge g
$$
where $W$ denotes the Weyl conformal curvature tensor, $E$ and $\rho$ are the traceless part of the Ricci curvature and the scalar curvature respectively. The vanishing of some component in this decomposition characterizes some special metric on Riemannian manifolds.
One of the important problems in Riemannian geometry is to investigate the rigidity phenomena of some canonical metrics on Riemannian manifolds. As is known, Einstein manifolds play an important role in both geometry and physics, while real spaces forms are the simplest geometric models. In \cite{goldberg1980application,hebey1996effectivel,iton2002isolation,kim2011rigidity,pigola2008vanishing,shen1990some,singer1992positive,xu2010p}, some rigidity results were given to characterize Einstein manifolds and real space forms among suitably larger classes of Riemannian manifolds respectively. The key point of their methods is a special type of differential inequalities derived from a suitable Weitzenb\"ock formula and a refined Kato inequality, which enables one to use either the maximum principle or the elliptic estimates to derive the rigidity results, under either pointwise pinching conditions or global integral pinching conditions.

In 1974, S. S. Chern and J. K. Moser introduced the so-called Chern-Moser
tensor $C$ for a pseudo-Hermitian manifold, which plays the role analogous
to the Weyl curvature tensor for a Riemannian manifold (cf. \cite{chern1974real,webster1978pseudo}). Let $R$
be the Webster curvature of the pseudo-Hermitian manifold. Then we
have the following orthogonal decomposition:
\begin{align*}
R_{\ba \beta \lambda \bm} =&C_{\ba \beta \lambda \bm} + \frac{1}{n+2} (E_{\ba \beta} \delta_{\lambda \bm} + E_{\ba \lambda} \delta_{\beta \bm} + \delta_{\ba \beta} E_{\lambda \bm} + \delta_{\ba \lambda} E_{\beta \bm}) \\
& + \frac{\rho}{n(n+1)} (\delta_{\ba \beta} \delta_{\lambda \bm} + \delta_{\ba \lambda} \delta_{\beta \bm})
\end{align*}
where $C_{\overline{\alpha }\beta \lambda \overline{\mu }}$ is the
Chern-Moser tensor, $E_{\alpha \overline{\beta }}$ is the traceless
pseudo-Hermitian Ricci tensor and $\rho $ is the pseudo-Hermitian scalar
curvature. Recall that a pseudo-Hermitian manifold $(M, HM, J_b, \theta)$ is spherical if and
only if $C = 0$ ($dim \: M \geq 5$); $(M, HM, J_b, \theta)$ is pseudo-Einstein if and only if $E=0$.
Clearly $(M, HM, J_b, \theta)$ is of constant pseudo-Hermitian sectional curvature if and only if $C=0$ and $E=0$, provided it has constant pseudo-Hermitian scalar curvature.
On the other hand, a pseudo-Hermitian manifold is called
Sasakian if its pseudo-Hermitian torsion vanishes. As a special class of
pseudo-Hermitian manifolds, Sasakian manifolds have received much attention
over the past two decades, due to at least two reasons. Firstly, Sasakian
manifolds are the links of K\"ahler cones, and secondly, Sasakian pseudo-Einstein
manifolds play a special role in String theory (cf. \cite{boyer2008sasakian}).

In this paper, we investigate the following two rigidity problems for
complete Sasakian manifolds:
\begin{problem} \mbox{}\par
\begin{enumerate}[(i)]
\item the rigidity of Sasakian pseudo-Einstein
manifolds among Sasakian manifolds with constant pseudo-Hermitian scalar
curvature;  \label{a6}
\item  the rigidity of Sasakian space forms among
Sasakian pseudo-Einstein manifolds.  \label{a7}
\end{enumerate}
\end{problem}
For these purposes, we derive the
Weitzenb\"ock formulas for $|E|^2$ and $|C|^2$ respectively. It turns out
that the vanishing of the pseudo-Hermitian torsion and the constancy of $\rho $ imply that $E$
is a Codazzi type tensor, and if $(M, HM, J_b, \theta)$ is pseudo-Einstein, then $C$ satisfies
a Bianchi-type identity. These properties for $E$ and $C$, combined with some
refined Kato inequalities, enable us to deduce some differential
inequalities for $|E|$ and $|C|$ respectively. Note that the differential
inequalities for $|E|$ and $|C|$ are of the same kind and the main
differential operator appearing in them is the sub-Laplacian. We will treat
both compact and complete noncompact Sasakian manifolds, and use either the
maximum principle or the sub-elliptic estimates to derive the rigidity
results from these differential inequalities, under either pointwise
pinching conditions or global integral pinching conditions. Roughly
speaking, our results are as follows:
\begin{enumerate}
\item For Problem \eqref{a6}, under either
suitable pointwise pinching conditions or suitable $L^{n+1}$ integral
pinching conditions on  $|C|$ and $|E|$, the Sasakian manifolds become Sasakian
pseudo-Einstein manifolds (Section \ref{s1});
\item For Problem \eqref{a7}, under either suitable
pointwise pinching conditions or suitable $L^{n+1}$ integral pinching
conditions on $|C|$, the Sasakian pseudo-Einstein manifolds become Sasakian space
forms (Section \ref{s2}).
\end{enumerate}
These results may be regarded as the CR analogues to the rigidity
results for Riemannian manifolds mentioned previously. Actually the authors
in \cite{itoh2004isolation} considered the Problem \eqref{a7} for compact Sasakian manifold too. They
gave an integral-type rigidity theorem for the Chern-Moser tensor on compact Sasakian
pseudo-Einstein manifolds to characterize the odd dimensional spheres.
However, their integral norm for $C$ is not CR conformal invariant. Using a CR
Sobolev-type inequality, we may obtain a somewhat different result which seems to be more natural from the viewpoints of CR geometry. Besides, the main part
of this paper is to study the Problems \eqref{a6} and \eqref{a7} for complete noncompact
Sasakian manifolds. On the other hand, the authors in \cite{dong2014gap} have established
similar rigidity theorems for complete K\"ahler manifolds with constant
scalar curvature.

\section{Pseudo-Hermitian Geometry}
In this section, we present some basic notions and formulas on CR manifolds. For details, the readers may refer to \cite{boyer2008sasakian,dragomir2006differential,lee1988psuedo,webster1978pseudo}. Recall that a smooth manifold $M$ of real dimension ($2n+1$) is said to be a CR manifold if
there exists a smooth rank $n$ complex subbundle $T_{1,0} M \subset TM \otimes \mathbb{C}$ such that
$$
T_{1,0} M \cap T_{0,1} M =0
$$
and
$$
[\Gamma (T_{1,0} M), \Gamma (T_{1,0} M)] \subset \Gamma (T_{1,0} M)
$$
where $T_{0,1} M = \overline{T_{1,0} M}$ is the complex conjugate of $T_{1,0} M$.
Equivalently, the CR structure may also be described by the real subbundle $HM = Re \: \{ T_{1,0}M \oplus T_{0,1}M \}$ of $TM$ which carries a complex structure $J_b : HM \rightarrow HM$ defined by $J_b (X+\overline{X})= \ii (X-\overline{X})$ for any $X \in T_{1,0} M$.
Since $HM$ is naturally oriented by the complex structure, then $M$ is orientable if and only if
there exists a global nowhere vanishing 1-form $\theta$ such that $\theta (HM) =0 $.
Any such section $\theta$ is referred to as a pseudo-Hermitian structure on $M$. Given a pseudo-Hermitian structure $\theta$ on $M$, the Levi form $L_\theta $ is defined by
$$L_\theta (Z,\overline{W} ) = - \ii d \theta (Z, \overline{W})  $$
for any $Z , W \in T_{1, 0} M$.
\begin{dfn}
An orientable CR manifold $M$ with a pseudo-Hermitian structure $\theta$, denoted by $(M, HM, J_b, \theta)$, is called a pseudo-Hermitian manifold. A pseudo-Hermitian manifold $(M, HM, J_b, \theta)$  is said to be a strictly pseudoconvex CR manifold if its Levi form $L_\theta$ is positive definite.
\end{dfn}

From now on we assume that  $(M, HM, J_b, \theta)$  is a strictly pseudoconvex CR manifold. Therefore there exists a unique nowhere zero vector field $T$ transverse to $HM$, satisfying
$T \lrcorner \: \theta =1, \ T \lrcorner \: d \theta =0$. This vector field is called the characteristic direction of $(M, HM, J_b, \theta)$. We can extend $J_b$ to an endomorphism of $TM$ by requiring that $J_b T=0$.
Define the bilinear form $G_\theta$ by
$$
G_\theta (X, Y)= d \theta (X, J_b Y), \quad \mbox{for any }  \  X, Y \in HM.
$$
Since $G_\theta$ is $J_b $-invariant and  coincides with $L_\theta$ on $T_{1,0} M \otimes T_{0,1} M$, $G_\theta$ is also positive definite on $HM \otimes HM$. This allows us to define a Riemannian metric $g_\theta$ on $TM$ by
$$g_\theta (X, Y) = G_\theta (\pi_H X, \pi_H Y)+ \theta(X) \theta (Y), \quad X, Y \in  TM$$
where $\pi_H : TM \rightarrow HM$ is the projection with respect to the direct sum decomposition $TM = HM \oplus \mathbb{R} T$. This metric $g_\theta $ is usually called the Webster metric. Sometimes we denote it by $\la \cdot , \cdot \ra$.

\begin{dfn}
Assume that $(M, HM, J_b, \theta)$ and $( \widetilde{M}, \widetilde{HM}, \widetilde{J_b}, \widetilde{\theta} )$ are two strictly pseudoconvex CR manifolds. We say that $(M, HM, J_b, \theta)$ is CR conformal to $( \widetilde{M}, \widetilde{HM}, \widetilde{J_b}, \widetilde{\theta} )$ if there exists a diffeomorphism $f: M \rightarrow \widetilde{M}$ and a smooth function $u$ on $M$ such that
\begin{gather*}
df \circ J_b = \widetilde{J_b} \circ df \mbox{ and } \ f^* \tilde{\theta} = u \theta.
\end{gather*}
Furthermore $(M, HM, J_b, \theta)$ is called to be D-homothetic to $( \widetilde{M}, \widetilde{HM}, \widetilde{J_b}, \widetilde{\theta} )$ if in addition $u$ is constant.
\end{dfn}

On a strictly pseudoconvex CR manifold, there exists a canonical connection preserving the complex structure and the Webster metric.
\begin{prp} [\cite{webster1978pseudo}; cf. also \cite{dragomir2006differential}]
Let $(M, HM, J_b, \theta)$ be a strictly pseudoconvex CR manifold. Let $T$ be the characteristic direction and $g_\theta$ the Webster metric. Then there is a unique linear connection $\nabla$ on $M$ (called the Tanaka-Webster connection) such that:
\begin{enumerate}[(1)]
\item The Levi distribution HM is parallel with respect to $\nabla$;
\item $\nabla J_b=0$, $\nabla g_\theta=0$;
\item The torsion $T_\nabla$ of the connection $\nabla$ satisfies that for any $X, Y \in HM$,
$$
T_{\nabla} (X, Y)= 2 d \theta (X, Y) T  \mbox{ and }T_{\nabla} (T, J_b X) + J_b T_{\nabla} (T, X) =0.
$$
\end{enumerate}
\end{prp}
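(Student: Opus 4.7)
The plan is to prove uniqueness and existence in turn, following the classical strategy that parallels the construction of the Chern connection on a Hermitian manifold. The existence will be carried out in a local admissible unitary coframe via the pseudo-Hermitian structure equations, and the uniqueness will be extracted from a simple tensorial difference argument.

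For uniqueness, I would first observe that $T$ itself is parallel: by (1), the line bundle $\mathbb{R}T = HM^{\perp}$ is $\nabla$-invariant, so $\nabla_X T = f(X)T$ for some $1$-form $f$; differentiating $g_\theta(T,T) \equiv 1$ and using $\nabla g_\theta = 0$ gives $f \equiv 0$. Now suppose $\nabla$ and $\nabla'$ both satisfy (1)--(3), and set $D_X Y := \nabla'_X Y - \nabla_X Y$. Then $D$ is a tensor that preserves the splitting $TM = HM \oplus \mathbb{R}T$, annihilates $T$, commutes with $J_b$, and is skew-symmetric with respect to $g_\theta$. The horizontal torsion condition forces $D_X Y = D_Y X$ for $X,Y \in HM$, and combining symmetry with skew-symmetry in the usual Koszul-cyclic manner kills the horizontal part of $D$. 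For the $T$-direction, the condition $T_\nabla(T, J_b X) + J_b T_\nabla(T, X) = 0$ pins down the $J_b$-antilinear part of $\nabla_T$ on $HM$, and $\nabla g_\theta = 0$ determines the $J_b$-linear part, leaving $D_T$ on $HM$ equal to zero.

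For existence, I would work in a local frame $\{Z_\alpha\}$ of $T_{1,0}M$ with dual coframe $\{\theta^\alpha\}$ chosen so that $L_\theta(Z_\alpha, \overline{Z_\beta}) = \delta_{\alpha\beta}$. One seeks connection forms $\omega^{\alpha}{}_{\beta}$ and torsion forms $\tau^\alpha$ satisfying
\begin{equation*}
d\theta^\alpha = \theta^\beta \wedge \omega^{\alpha}{}_{\beta} + \theta \wedge \tau^\alpha,
\qquad
\omega^{\alpha}{}_{\beta} + \overline{\omega^{\beta}{}_{\alpha}} = 0,
\end{equation*}
with $\tau^\alpha = A^{\alpha}{}_{\bb}\,\theta^{\bb}$ and $A^{\alpha\beta} = A^{\beta\alpha}$. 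By a standard application of Cartan's lemma to the $(1,1)$-, $(2,0)$- and $\theta$-components of $d\theta^\alpha$, this system has a unique solution. Declaring $\nabla Z_\alpha = \omega^{\beta}{}_{\alpha} \otimes Z_\beta$, extending by complex conjugation to $T_{0,1}M$, and setting $\nabla T = 0$, defines $\nabla$ on a neighborhood. Items (1) and (2) are immediate from the construction, while (3) is read off the structure equations: the horizontal torsion identity comes from the relation between $d\theta$ and $L_\theta$, and the second identity in (3) is exactly the symmetry $A^{\alpha\beta} = A^{\beta\alpha}$ in disguise. The locally defined connections glue to a global $\nabla$ by the uniqueness already established.

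The main obstacle is the algebraic disentanglement in the existence step: one has to check that the combination of unitarity of $\omega^{\alpha}{}_{\beta}$, the purity type $\tau^\alpha \in \Lambda^{0,1} \!\mod \theta$, and the symmetry $A^{\alpha\beta} = A^{\beta\alpha}$ leaves no ambiguity in the decomposition of $d\theta^\alpha$. Once this linear-algebraic step is handled, translating the resulting structure equations back into the coordinate-free torsion identities of (3) and verifying compatibility with $J_b$ and $g_\theta$ are essentially bookkeeping.
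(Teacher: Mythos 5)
The paper itself offers no proof of this proposition --- it is quoted from Webster and from Dragomir--Tomassini --- so there is no in-paper argument to compare against; your outline reproduces the standard proof from those sources (tensorial uniqueness via the symmetric/skew cancellation on $HM$ and a type clash in the $T$-direction, existence via the structure equations in an admissible unitary coframe), and its skeleton is sound. Two attributions should be corrected, however. First, in the uniqueness step for $D_T$ the fact that $D_T$ commutes with $J_b$ comes from $\nabla J_b=0$, not from $\nabla g_\theta=0$: writing $D_TX=\tau'(X)-\tau(X)$ (using that $T$ is parallel for both connections), the second identity in (3) makes $D_T$ anticommute with $J_b$, while $\nabla J_b=\nabla' J_b=0$ makes it commute, and since $J_b^2=-\mathrm{id}$ on $HM$ this forces $D_T=0$; metric compatibility alone does not close this step, because for $n\geq 2$ a $g_\theta$-skew, $J_b$-antilinear endomorphism of $HM$ need not vanish. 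Second, the identity $T_\nabla(T,J_bX)+J_bT_\nabla(T,X)=0$ is not the symmetry $A^{\alpha\beta}=A^{\beta\alpha}$ in disguise: it corresponds to the purity condition $\tau^\alpha\equiv 0\ (\mathrm{mod}\ \theta^{\bar{1}},\dots,\theta^{\bar{n}})$, i.e.\ to $\tau(T_{1,0}M)\subset T_{0,1}M$. The symmetry of $A$ is the separate statement that $\tau$ is $g_\theta$-symmetric, which the paper records as a derived property of the connection rather than an axiom; in the existence step it follows from $d^2\theta=0$ after the forms are normalized (and imposing it does no harm), but it is not what delivers item (3). With these two points repaired, the argument is the classical one and is complete; note also that in your uniqueness computation the full skew-Hermitian condition $\omega^{\alpha}{}_{\beta}+\overline{\omega^{\beta}{}_{\alpha}}=0$ already pins down $\omega$ and $\tau^\alpha$ without invoking the symmetry of $A$ at all.
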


The pseudo-Hermitian torsion, denoted by $\tau$, is the $TM$-valued 1-form defined by $\tau(X) = T_{\nabla} (T,X)$.
A pseudo-Hermitian manifold is called Sasakian if $\tau \equiv 0$. Clearly, the D-homothetic transformation of a Sasakian manifold is also a Sasakian manifold.

Let $(M, HM, J_b, \theta)$ be a strictly pseudoconvex CR manifold of dimension $2n+1$. Let $\{ \eta_1, \dots, \eta_n \}$ be a local orthonormal frame of $T_{1,0} M$ defined on an open set $U \subset M$ , and $\{ \theta^1, \dots \theta^n \}$ its dual coframe. Note that $\tau(T_{1,0} M) \subset T_{0,1} M$ and $\tau$ is $g_\theta$-symmetric (cf. \cite{dragomir2006differential}). Thus we can write $\tau \eta_\alpha = A_{\alpha}^{\bb} \eta_{\bb}$ for some local smooth functions $A_{\alpha}^{\bb} : U \rightarrow \mathbb{C}$.
Then the structure equations are given by
\begin{numcases}{ }
d \theta = 2 \ii \theta^\alpha \wedge \theta^{\ba} , \\
d \theta^\alpha = \theta^\beta \wedge \theta^\alpha_\beta + \theta \wedge \tau^\alpha ,   \\
\theta^\alpha_\beta + \theta^{\bb}_{\ba}= 0 ,  \\
d \theta^\alpha_\beta = \theta^\gamma_\beta \wedge \theta^\alpha_\gamma + \Pi^\alpha_\beta
\end{numcases}
where $\theta^\alpha_\beta$'s are the Tanaka-Webster connection 1-forms with respect to $\{ \eta_\alpha \}$ and $\tau^\alpha= A^\alpha_{\bb} \theta^{\bb}$. In \cite{webster1978pseudo}, S. M. Webster showed that
\begin{align}
\Pi_\beta^\alpha= 2 \ii (\theta^\alpha \wedge \tau_\beta + \theta_\beta \wedge \tau^\alpha) + R^\alpha_{\beta \lambda \bm} \theta^\lambda \wedge \theta^{\bm} + A^\alpha_{\bm, \beta} \theta \wedge \theta^{\bm} - A_{\mu \beta ,}^{\quad \; \alpha} \theta \wedge \theta^\mu \label{a8}
\end{align}
where $R^\alpha_{\beta \lambda \bm}$ is called the Webster curvature.
He also derived the first Bianchi identity, i.e.  $R_{\ba \beta \lambda \bm} = R_{\ba \lambda \beta \bm}$. So the pseudo-Hermitian Ricci curvature can be defined by $R_{\lambda \bm}= R_{\ba \alpha \lambda \bm}$ and then the pseudo-Hermitian scalar curvature is $\rho= R_{\alpha \ba}=R_{\bb \beta \alpha \ba}$.  If $R_{\lambda \bm} = \frac{\rho}{n} \delta_{\lambda \bm}$, $(M, HM, J_b, \theta)$ is called a pseudo-Einstein manifold. To characterize the pseudo-Einstein manifolds, the traceless pseudo-Hermitian Ricci tensor is defined by $E_{\alpha \bb} = E_{\bb \alpha} = R_{\alpha \bb} - \frac{\rho}{n}$. Clearly $(M, HM, J_b, \theta)$ is pseudo-Einstein if and only if $E =0$. From \eqref{a8}, the pseudo-Hermitian torsion and its covariant derivative reflect partial information about the curvature tensor of the Tanaka-Webster connection. The second Bianchi identities were given by J. M. Lee in Lemma 2.2 of \cite{lee1988psuedo}.
\begin{lem}[Second Bianchi identities, \cite{lee1988psuedo}]
The Webster curvature satisfies the following identities
\begin{gather}
R_{\ba \beta \lambda \bm, \gamma} - R_{\ba \beta \gamma \bm, \lambda} = 2 \ii ( A_{\beta \gamma, \bm} \delta_{\ba \lambda} +  A_{\gamma \beta , \ba} \delta_{\lambda \bm} - A_{\beta \lambda, \bm} \delta_{\ba \gamma} -  A_{\lambda \beta, \ba} \delta_{\gamma \bm} ), \label{a4} \\
R_{\ba \beta \lambda \bm, 0}= A_{\lambda \beta, \ba \bm} + A_{\ba \bm , \beta \lambda} + 2 \ii (A_{\ba \bg} A_{\gamma \lambda } \delta_{\beta \bm} - A_{\beta \gamma} A_{\bg \bm} \delta_{\ba \lambda} ), \label{a5}
\end{gather}
and the contracted identities:
\begin{gather}
R_{\lambda \bm, \gamma} - R_{\gamma \bm, \lambda} = 2\ii (A_{\gamma \alpha, \ba} \delta_{\lambda \bm} - A_{\lambda \alpha, \ba} \delta_{\gamma \bm}), \label{a1} \\
\rho_\lambda - R_{\lambda \bm, \mu} = 2 \ii (n-1) A_{\lambda \mu, \bm}, \label{a9} \\
R_{\lambda \bm, 0} = A_{\lambda \alpha, \ba \bm} + A_{\ba \bm, \alpha \lambda}, \label{a2} \\
\rho_0 =A_{\lambda \alpha, \ba \bl} + A_{\ba \bl, \alpha \lambda}. \label{a3}
\end{gather}
\end{lem}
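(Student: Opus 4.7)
The plan is to obtain all six identities by exterior differentiation of the curvature $2$-form formula \eqref{a8} and comparison with the second Bianchi identity of the Tanaka--Webster connection. Starting from $d\theta^\alpha_\beta = \theta^\gamma_\beta \wedge \theta^\alpha_\gamma + \Pi^\alpha_\beta$ and applying $d^2=0$, I would first establish the abstract second Bianchi identity
$$
d\Pi^\alpha_\beta = \theta^\gamma_\beta \wedge \Pi^\alpha_\gamma - \Pi^\gamma_\beta \wedge \theta^\alpha_\gamma .
$$
Then I substitute the explicit expression \eqref{a8} for $\Pi^\alpha_\beta$ into both sides and use the structure equations to expand $d\theta$, $d\theta^\lambda$, $d\theta^{\bm}$, and to rewrite $dR^\alpha_{\beta\lambda\bm}$, $dA^\alpha_{\bm,\beta}$, $dA_{\mu\beta,}{}^\alpha$ via the covariant derivatives with respect to $\nabla$ (i.e.\ $dR^\alpha_{\beta\lambda\bm} = R^\alpha_{\beta\lambda\bm,\gamma}\theta^\gamma + R^\alpha_{\beta\lambda\bm,\bg}\theta^{\bg} + R^\alpha_{\beta\lambda\bm,0}\theta$, modulo connection forms).

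The identities are then extracted by matching coefficients in the basis of $3$-forms. Comparing the coefficient of $\theta^\gamma \wedge \theta^\lambda \wedge \theta^{\bm}$ yields \eqref{a4}, where the torsion terms $2\ii\theta^\alpha\wedge\tau_\beta + 2\ii\theta_\beta\wedge\tau^\alpha$ are responsible for the Kronecker-delta corrections on the right-hand side. Comparing the coefficient of $\theta\wedge\theta^\lambda\wedge\theta^{\bm}$ (after using $d\theta = 2\ii\theta^\alpha\wedge\theta^{\ba}$ to generate $\theta$-terms from $\theta^\alpha\wedge\theta^{\ba}$ products) produces \eqref{a5}; here the quadratic torsion terms come from the wedge of $\theta\wedge\tau_\beta$ with $\theta^\lambda\wedge\theta^{\bm}$-type contributions once $d\tau^\alpha$ is expanded. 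Contraction $\ba = \alpha$ in \eqref{a4} with the index pair $(\lambda,\gamma)$ then gives \eqref{a1}, and a further contraction $\bm=\mu$ combined with the first Bianchi identity $R_{\ba\beta\lambda\bm}=R_{\ba\lambda\beta\bm}$ yields \eqref{a9}. Likewise \eqref{a2} and \eqref{a3} are obtained by contracting \eqref{a5}.

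The main obstacle is bookkeeping: \eqref{a8} contains four terms, and exterior differentiation of each produces several contributions that must be sorted into the three disjoint $3$-form types ($(2,1)$, $(1,2)$, and those containing $\theta$). The $\theta$-bearing terms are the delicate ones because they receive contributions both from the explicit $\theta$-factors in \eqref{a8} and implicitly from $d\theta$ applied to the first two terms. Once those cancellations are carried out carefully, the identities \eqref{a4} and \eqref{a5} fall out directly, and the contracted identities are immediate algebraic consequences, so no further analytic input is needed.
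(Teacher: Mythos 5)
The paper offers no proof of this lemma at all---it is quoted verbatim from Lemma 2.2 of Lee's cited paper---so there is nothing internal to compare against. Your outline (apply $d^2=0$ to the fourth structure equation to get $d\Pi^\alpha_\beta=\theta^\gamma_\beta\wedge\Pi^\alpha_\gamma-\Pi^\gamma_\beta\wedge\theta^\alpha_\gamma$, substitute \eqref{a8}, sort by $3$-form type to read off \eqref{a4} and \eqref{a5}, then contract using the symmetry of $A_{\alpha\beta}$ and the first Bianchi identity to get \eqref{a1}--\eqref{a3} and \eqref{a9}) is exactly the standard derivation carried out in that reference, and it is correct.
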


In \cite{webster1978pseudo}, S. M. Webster introduced a pseudo-Hermitian analogue of the notion of holomorphic sectional curvature in Hermitian geometry. For consistency, we recall it in the terminology of \cite{dragomir2006differential}. Given $x \in M$, let $G_1 (HM)_x$ consist of all 2-planes $\sigma = span \{ X, J_b X\} \subset T_x M$ for all $X \in H_x M$. Then $G_1 (HM)$ (the disjoint union of all $G_1 (HM)_x $) is a fiber bundle over $M$ with the standard fiber $\mathbb{C} P^{n-1}$. Define a function
$
k_\theta: G_1 (HM) \rightarrow \mathbb{R}
$
by
\begin{align*}
k_\theta (\sigma ) =  \frac{1}{4} \frac{ g_\theta \big( R (X, J_b X) J_b X, X \big) }{g_\theta (X, X)^2}
\end{align*}
for any $\sigma \in G_1 (HM) $ spanned by $X$ and $J_b X$. Actually if $Z = \frac{1}{\sqrt{2}} (X - \ii J_b X)$, then
\begin{align*}
k_\theta (\sigma) = - \frac{1}{4} \frac{ g_\theta ( R(Z, \overline{Z}) \overline{Z}, Z ) }{ g_{\theta} ( Z, \overline{Z} )^2 }
\end{align*}
This function $k_\theta$ is referred to as the pseudo-Hermitian sectional curvature of $(M, HM, J_b, \theta)$.
Clearly if $R_{\ba \beta \lambda \bm} = 2 \kappa (\delta_{\ba \beta} \delta_{\lambda \bm} + \delta_{\ba \lambda} \delta_{\beta \bm} ) $ for some constant $\kappa$, then $(M, HM, J_b , \theta)$ has constant pseudo-Hermitian sectional curvature $ \kappa $.
As a consequence of Theorem 1.6 in \cite{dragomir2006differential}, the Riemannian curvature $R^\theta$ and the Webster curvature $R$ of a Sasakian manifold have the following connection:
for any $X, Y, Z \in T M$,
\begin{align}
R^\theta (X, Y) Z = & R(X, Y)Z + g_\theta (JX, Z) JY- g_\theta (JY, Z) JX + 2 d \theta (X, Y) JZ  \nonumber \\
& + \theta (X) g_\theta (Y, Z) T - \theta (Y) g_\theta (X, Z) T - \theta(Z) \theta(X) Y + \theta(Z) \theta(Y) X \label{a11}
\end{align}
which gives the relationship between the Riemannian Ricci curvature $Ric^\theta$ and the pseudo-Hermitian Ricci curvature:
\begin{align}
Ric^\theta_{\alpha \bb} = R_{\alpha \bb} - 2 \delta_{\alpha \bb}, \quad Ric^\theta_{\alpha \beta} = Ric^\theta_{\alpha 0}=0, \quad \mbox{and } Ric^\theta_{00} = 2n.  \label{a10}
\end{align}
From \eqref{a11}, if a Sasakian manifold has constant pseudo-Hermitian sectional curvature $\kappa $, it has constant $J_b$-sectional curvature $ 4 \kappa -3$ in the terminology of \cite{blair2010riemannian,boyer2008sasakian}. Such manifold is called Sasakian space form. The following theorem of Tanno \cite{tanno1969sasakian} gives the classification of Sasakian space forms.

\begin{lem}[\cite{tanno1969sasakian}; also cf. {\cite[p.\,142]{blair2010riemannian}} and  {\cite[p.\,229]{boyer2008sasakian}}] \label{alm4}
Let $(M, HM, J_b, \theta)$ be a complete simply connected Sasakian manifold with constant pseudo-Hermitian sectional curvature $\kappa$. Then $(M, HM, J_b, \theta)$ is  one of the following:
\begin{enumerate}
\item if $\kappa >0 $, $(M, HM, J_b, \theta)$ is D-homothetic to $S^{2n+1}$;
\item if $\kappa =0 $, $(M, HM, J_b, \theta)$ is D-homothetic to $\mathbb{H}^n$;
\item if $\kappa <0 $, $(M, HM, J_b, \theta)$ is D-homothetic to $B^n_{\mathbb{C}} \times \mathbb{R}$.
\end{enumerate}
\end{lem}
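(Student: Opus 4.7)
The plan is to (i) reduce to normalized values of $\kappa$ via a D-homothety, (ii) show that $\tau\equiv 0$ together with constancy of $\kappa$ completely determines the Riemannian curvature and the first-order behavior of the Sasakian structure tensors $(g_\theta, J_b, \theta, T)$, and (iii) apply a Cartan--Ambrose--Hicks-type uniqueness argument to identify $M$ globally with the appropriate standard model.

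For step (i), I would note that a D-homothety $\tilde\theta=c\theta$ with $c>0$ constant preserves the Sasakian condition ($\tilde\tau=c\tau=0$) and scales the pseudo-Hermitian sectional curvature by a factor $1/c$; in particular the sign of $\kappa$ is the only D-homothety invariant, so one may normalize $\kappa$ to a convenient representative in each of the three sign cases. For step (ii), constancy of $\kappa$ is equivalent to $R_{\ba\beta\lambda\bm}=2\kappa(\delta_{\ba\beta}\delta_{\lambda\bm}+\delta_{\ba\lambda}\delta_{\beta\bm})$, and combining this with the Sasakian vanishing $A_{\alpha\beta}=0$ and formula \eqref{a11} yields an explicit algebraic formula for the Levi-Civita curvature $R^\theta$ as a polynomial in $g_\theta$, $J_b$, $\theta$, $T$ and $\kappa$. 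The classical Sasakian identities $\nabla^\theta_X T=-J_b X$ and the corresponding expressions for $\nabla^\theta J_b$ and $\nabla^\theta\theta$ then imply that all covariant derivatives of the tuple $(g_\theta, J_b, \theta, T)$ are themselves algebraic functions of that tuple.

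For step (iii), the algebraic data at a single point determine the geometry along any geodesic through a closed system of first-order ODEs. A Cartan--Ambrose--Hicks-type argument then produces, for any two complete simply connected Sasakian manifolds $M_1, M_2$ with the same constant $\kappa$, a global diffeomorphism $F:M_1\to M_2$ that is simultaneously a Riemannian isometry and a CR isomorphism with $F^*\theta_2=\theta_1$, obtained by matching a pointwise isomorphism of Sasakian frames at chosen base points and propagating it along radial geodesics. Taking $M_2$ to be the relevant standard model and reinstating the scale factor absorbed in step (i) then gives the D-homothety claimed in the lemma.

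The main obstacle is the globalization in step (iii): the classical Cartan--Ambrose--Hicks theorem only produces a Riemannian isometry, so one must upgrade it to a Sasakian isomorphism by verifying that parallel transport of the initial frame isomorphism propagates the matching of all four tensors $(g_\theta, J_b, \theta, T)$ consistently around loops, which in turn rests on the closure of the ODE system above and on the simple connectedness of $M$. An elegant alternative, essentially equivalent in content, is the Boothby--Wang approach, which realizes $M$ as a principal circle or line bundle over a K\"ahler space form and reduces the classification to the classical one for complex space forms; this avoids the frame-transport step at the cost of some additional global fibration analysis.
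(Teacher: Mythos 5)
The paper does not prove this lemma at all: it is imported verbatim from Tanno's 1969 classification (with pointers to Blair and Boyer--Galicki), so there is no in-paper argument to compare yours against. Judged on its own, your outline follows the standard route in the literature and is sound in structure: the D-homothety normalization in step (i) is correct (a constant rescaling $\tilde\theta=c\theta$ leaves the Tanaka--Webster connection unchanged, kills no torsion that was not already zero, and scales $k_\theta$ by $1/c$, so the sign of $\kappa$ is the only invariant and matches the paper's dictionary $H=4\kappa-3$ with the three model values); step (ii) is exactly the mechanism the paper sets up, since \eqref{a11} converts the Webster curvature identity $R_{\ba\beta\lambda\bm}=2\kappa(\delta_{\ba\beta}\delta_{\lambda\bm}+\delta_{\ba\lambda}\delta_{\beta\bm})$ into an explicit algebraic formula for $R^\theta$, and the Sasakian identities close the first-order system for $(g_\theta,J_b,\theta,T)$.

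Two places where your sketch is still a program rather than a proof. First, in step (ii) you assert that constancy of $k_\theta$ is \emph{equivalent} to the displayed form of $R_{\ba\beta\lambda\bm}$; the paper only states the easy converse, and the forward direction needs the polarization argument using the symmetry $R_{\ba\beta\lambda\bm}=R_{\ba\lambda\beta\bm}$ (the pseudo-Hermitian analogue of the fact that constant holomorphic sectional curvature determines the Kähler curvature tensor). Second, and more substantially, the upgrade of Cartan--Ambrose--Hicks from a Riemannian isometry to a Sasakian isomorphism is precisely the content of Tanno's theorem and is where all the work lies; you correctly identify it as the obstacle but do not execute it. In particular, in the case $\kappa>0$ normalized to constant sectional curvature $1$, the round sphere carries many Sasakian structures and one must show the transported structure is conjugate to the standard one under an isometry of $S^{2n+1}$ --- the matching of $T$ is not automatic from the metric alone. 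Your Boothby--Wang alternative is a legitimate substitute but trades this for a regularity/fibration analysis. None of this is a wrong turn; it is simply that the lemma's proof lives in the cited references, and your sketch reproduces their skeleton without the decisive uniqueness step.
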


By studying the CR conformal transformations, the authors in \cite{chern1974real,webster1978pseudo} found an important tensor called Chern-Moser tensor, which plays the role analogous to Weyl tensor in Riemannian geometry. It is defined by
\begin{align}
C_{\ba \beta \lambda \bm} =& R_{\ba \beta \lambda \bm} - \frac{1}{n+2} (E_{\ba \beta} \delta_{\lambda \bm} + E_{\ba \lambda} \delta_{\beta \bm} + \delta_{\ba \beta} E_{\lambda \bm} + \delta_{\ba \lambda} E_{\beta \bm}) \nonumber \\
&  - \frac{\rho}{n(n+1)} (\delta_{\ba \beta} \delta_{\lambda \bm} + \delta_{\ba \lambda} \delta_{\beta \bm}) \label{c3}
\end{align}
which is the projection of the Webster curvature into the traceless subspace.
Moreover under a CR conformal transformation $\tilde{\theta} = e^{2u} \theta$ for some $u \in C^{\infty} (M)$, the new Chern-Moser tensor and the original one differ by a conformal factor, i.e. $\tilde{C} = e^{-2u} C$. Hence
$$
||C||_{L^{n+1} (M)} = \bigg( \int_M |C|^{n+1} \theta \wedge (d \theta)^n \bigg)^{\frac{1}{n+1}}
$$
is a CR conformal invariant. It is known that if the Chern-Moser tensor vanishes and $2n+1 \geq 5$, the strictly pseudoconvex CR manifold is locally CR isomorphic to the unit sphere $S^{2n+1} \subset \mathbb{C}^{n+1}$. Such manifold is called spherical CR manifold. Clearly a Sasakian manifold is a Sasakian space form if and only if it is spherical and pseudo-Einstein. In Sasakian geometry, the Chern-Moser tensor agrees with the contact Bochner curvature tensor defined in \cite{itoh2004isolation}.

Since the scalar curvature of an Einstein manifold is constant, it is natural to ask whether this property is still true for  the pseudo-Hermitian scalar curvature of a pseudo-Einstein manifold. We may show that the answer is affirmative for some special case.
\begin{lem} \label{alm2}
Let $(M, HM, J_b, \theta)$ be a pseudo-Einstein manifold of dimension $2n+1 \geq 5$. If the divergence of the pseudo-Hermitian torsion vanishes, the pseudo-Hermitian scalar curvature is constant.
\end{lem}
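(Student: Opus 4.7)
The plan is to differentiate the pseudo-Einstein identity $R_{\lambda\bm} = \frac{\rho}{n}\delta_{\lambda\bm}$ through the contracted second Bianchi identities \eqref{a9} and \eqref{a3}, thereby reducing derivatives of $\rho$ to covariant derivatives of the pseudo-Hermitian torsion, and then applying the divergence-free hypothesis.

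First, substituting $R_{\lambda\bm} = \frac{\rho}{n}\delta_{\lambda\bm}$ into \eqref{a9}, the contracted term becomes $R_{\lambda\bm,\mu} = \frac{1}{n}\rho_\lambda$, so the left-hand side of \eqref{a9} collapses to $\frac{n-1}{n}\rho_\lambda$. Since the dimension hypothesis $2n+1 \geq 5$ forces $n \geq 2$, I can divide by $n-1$ to obtain
$$
\rho_\lambda = 2\ii n \, A_{\lambda\mu,\bm}.
$$
The right-hand side is exactly the CR-divergence of the symmetric torsion tensor $A_{\alpha\beta}$, which vanishes by hypothesis; hence $\rho_\lambda \equiv 0$ and, by complex conjugation, $\rho_{\bl} \equiv 0$. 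Thus every horizontal derivative of $\rho$ vanishes.

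To handle the transverse (Reeb) derivative I would invoke \eqref{a3}, namely $\rho_0 = A_{\lambda\alpha,\ba\bl} + A_{\ba\bl,\alpha\lambda}$, and observe that the mixed expression $A_{\lambda\alpha,\ba\bl}$ is simply the $\bl$-covariant derivative of the tensor $A_{\lambda\alpha,\ba}$, which is already identically zero on $M$ by the divergence-free hypothesis; consequently both summands of $\rho_0$ vanish, so $\rho_0 \equiv 0$. All directional derivatives of $\rho$ being zero, $\rho$ must be constant on each connected component of $M$. The only subtle point is recognizing that the second-order expression in \eqref{a3} can be read as a single further derivative of an already-vanishing quantity, which sidesteps any need to commute second covariant derivatives of $A$; the dimension hypothesis enters only through the factor $n-1$ in \eqref{a9}, which degenerates precisely in the three-dimensional case where a different argument would be required.
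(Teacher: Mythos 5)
Your proof is correct and follows essentially the same route as the paper: both arguments substitute the pseudo-Einstein condition into the contracted Bianchi identity \eqref{a9} to get $\frac{n-1}{n}\rho_\lambda = 2\ii(n-1)A_{\lambda\mu,\bm} = 0$ (using $n\geq 2$), and then kill $\rho_0$ via \eqref{a3} from the vanishing of the torsion divergence. The only difference is presentational — you solve for $\rho_\lambda$ before applying the hypothesis, and you spell out why both summands of \eqref{a3} vanish, which the paper leaves implicit.
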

\begin{proof}
Since $(M, HM, J_b, \theta)$ is pseudo-Einstein, then $R_{\lambda \bm} = \frac{\rho}{n} \delta_{\lambda \bm}$. Since the divergence of the pseudo-Hermitian torsion vanishes and $n \geq 2$, by \eqref{a9}, we have
\begin{align*}
0 = \rho_\lambda - R_{\lambda \bm, \mu} = \rho_{\lambda } - \frac{1}{n} \rho_{\lambda}
\end{align*}
which implies $\rho_{\lambda} =0 $. Moreover \eqref{a3} yields $\rho_0 =0$. Hence $\rho$ is constant.
\end{proof}

However, the pseudo-Einstein condition does not imply the constancy
of the pseudo-Hermitian scalar curvature in general, even if the Chern-Moser
tensor vanishes. Before giving a counter-example, let us recall the following lemma.
\begin{lem}[\cite{lee1988psuedo}] \label{alm1}
Let $(M, HM, J_b, \theta)$ be a strictly pseudoconvex CR manifold. Under the CR conformal transformation $\tilde{\theta} = e^{2u} \theta$ for some $u \in C^{\infty} (M)$, the pseudo-Hermitian torsion and the pseudo-Hermitian curvature transform as follows:
\begin{gather*}
\tilde{A}_{\alpha \beta } = e^{-2u} \big( A_{\alpha \beta} + \ii u_{\alpha \beta}   - 2 \ii u_{\alpha } u_{\beta}   \big)
\end{gather*}
and
\begin{gather*}
e^{2u} \tilde{R}_{\lambda \bm} = R_{\lambda \bm} - (n+2) ( u_{\lambda \bm} + u_{\bm \lambda} ) -\delta_{\lambda \bm} \big(  u_{\alpha \ba} + u_{\ba \alpha} + 4 (n+1) u_\alpha u_{\ba} \big) \\
e^{2u} \tilde{\rho} = \rho - 2 (n+1) (  u_{\alpha \ba} + u_{\ba \alpha}) - 4 (n+1) u_\alpha u_{\ba}
\end{gather*}
where the covariant derivatives of $u$ are computed with respect to the original pseudo-Hermitian structure $(M, HM, J_b, \theta)$ and $\tilde{A}_{\lambda \mu}$, $\tilde{R}_{\lambda \bm}$ are evaluated with respect to the new coframe $\tilde{\theta}^\alpha = e^u (\theta^\alpha + \ii u^\alpha \theta ) $.
\end{lem}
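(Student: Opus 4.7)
The plan is to derive the transformation formulas directly from the structure equations of the Tanaka--Webster connection, exploiting the uniqueness of this connection recorded earlier in the excerpt. Given a local orthonormal admissible coframe $\{\theta^\alpha\}$ adapted to $(M, HM, J_b, \theta)$, the first step is to verify that
$$\tilde\theta^\alpha = e^u(\theta^\alpha + \ii u^\alpha \theta)$$
is an orthonormal admissible coframe for the rescaled structure $\tilde\theta = e^{2u}\theta$. I would expand $d\tilde\theta = 2 e^{2u} du \wedge \theta + e^{2u} d\theta$, decompose $du = u_\alpha\theta^\alpha + u_{\ba}\theta^{\ba} + u_0 \theta$, and check that the first structure equation $d\tilde\theta = 2\ii \tilde\theta^\alpha \wedge \tilde\theta^{\ba}$ holds; the cross terms $\theta \wedge \theta^\alpha$ cancel precisely because of the explicit $\ii u^\alpha \theta$ correction built into $\tilde\theta^\alpha$.

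Next, I would determine the new connection forms $\tilde\theta^\alpha_\beta$. By the uniqueness statement of the Tanaka--Webster connection, it suffices to exhibit 1-forms satisfying the remaining three structure equations together with the torsion normalisations. Writing
$$\tilde\theta^\alpha_\beta = \theta^\alpha_\beta + \Phi^\alpha_\beta,$$
I would differentiate $\tilde\theta^\alpha$ using $d\theta^\alpha = \theta^\gamma \wedge \theta^\alpha_\gamma + \theta \wedge \tau^\alpha$, and collect terms in $\tilde\theta^\gamma \wedge \tilde\theta^\alpha_\gamma + \tilde\theta \wedge \tilde\tau^\alpha$ to read off $\Phi^\alpha_\beta$ together with $\tilde\tau^\alpha$. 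The skew-Hermitian condition $\tilde\theta^\alpha_\beta + \tilde\theta^{\bb}_{\ba} = 0$ pins down the trace part of $\Phi^\alpha_\beta$, while matching the coefficients of $\theta \wedge \tilde\theta^{\bb}$ forces the torsion transformation
$$\tilde A_{\alpha\beta} = e^{-2u}\bigl(A_{\alpha\beta} + \ii u_{\alpha\beta} - 2\ii u_\alpha u_\beta\bigr).$$
Symmetry of $\tilde A_{\alpha\beta}$ is consistent because $u_{\alpha\beta} - u_{\beta\alpha} = 0$ by the Tanaka--Webster commutation formula in the $(1,0)$ directions on functions.

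For the curvature transformation, I would substitute the expression for $\tilde\theta^\alpha_\beta$ into $\tilde\Pi^\alpha_\beta = d\tilde\theta^\alpha_\beta - \tilde\theta^\gamma_\beta \wedge \tilde\theta^\alpha_\gamma$, use \eqref{a8} to read off the coefficient of $\tilde\theta^\lambda \wedge \tilde\theta^{\bm}$ (which equals $\tilde R^\alpha_{\beta\lambda\bm}$ modulo the torsion terms already accounted for in $\tilde\tau^\alpha$), and then contract. The pure second-derivative terms come from differentiating $\Phi^\alpha_\beta$, while the quadratic $u_\alpha u_{\ba}$ contributions arise from the cross products $\Phi^\gamma_\beta \wedge \theta^\alpha_\gamma$ and $\Phi^\gamma_\beta \wedge \Phi^\alpha_\gamma$. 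Tracing on $\alpha=\beta$ and converting back to $\theta$-based quantities via $\tilde\theta^\alpha = e^u(\theta^\alpha + \ii u^\alpha\theta)$ yields the stated formula for $e^{2u}\tilde R_{\lambda\bm}$, and a further trace in $\lambda\bm$ gives the law for $\tilde\rho$.

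The main obstacle will be the bookkeeping in the second and third steps: one has to distinguish carefully between mixed second covariant derivatives $u_{\alpha\bb}$ and $u_{\bb\alpha}$, which differ by Levi-form and torsion terms via the Tanaka--Webster commutation relations, and ensure that all terms involving $u_0$ ultimately cancel from the Ricci and scalar formulas as stated. Managing the skew-Hermitian constraint alongside the torsion normalisation is the delicate point, since the trace and traceless parts of $\Phi^\alpha_\beta$ are coupled; once the correct ansatz for $\Phi^\alpha_\beta$ is isolated, the remainder of the argument is a systematic expansion.
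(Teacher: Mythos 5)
The paper gives no argument for this lemma at all: it is quoted from Lee's paper (the cited reference \cite{lee1988psuedo}), so there is no internal proof to compare against. Your outline is precisely the derivation carried out in that source: check that $\tilde\theta^\alpha = e^u(\theta^\alpha + \ii u^\alpha\theta)$ is an admissible coframe for $\tilde\theta = e^{2u}\theta$ (your observation that the $\theta\wedge\theta^\alpha$ cross terms in $d\tilde\theta$ are killed exactly by the $\ii u^\alpha\theta$ correction is the right verification), then use the uniqueness of the Tanaka--Webster connection to identify $\tilde\theta^\alpha_\beta = \theta^\alpha_\beta + \Phi^\alpha_\beta$ and $\tilde\tau^\alpha$ from the structure equations, and finally expand $d\tilde\theta^\alpha_\beta - \tilde\theta^\gamma_\beta\wedge\tilde\theta^\alpha_\gamma$ and trace. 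You also correctly flag the two genuinely delicate points: the coupling between the skew-Hermitian normalisation and the torsion condition when solving for $\Phi^\alpha_\beta$, and the need to track the commutation relations separating $u_{\alpha\bb}$ from $u_{\bb\alpha}$ (and the cancellation of $u_0$ terms). The one caveat is that, as written, this is a plan rather than a proof: the entire content of the lemma resides in the specific coefficients ($n+2$, $4(n+1)$, the factor $-2\ii u_\alpha u_\beta$ in $\tilde A_{\alpha\beta}$), and none of these can be confirmed until $\Phi^\alpha_\beta$ is actually written out and the curvature expansion is performed term by term; your proposal explicitly defers exactly that computation. So the method is sound and matches the cited source, but the formulas themselves are not yet verified by what you have written.
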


\begin{exm} \label{aex1}
Let $( \mathbb{H}^n, H \mathbb{H}^n, J_b, \theta )$ be the Heisenberg group. Under the standard coordinates $ (z, t) $ in $ \mathbb{H}^n \cong \mathbb{C}^n \times \mathbb{R} $, we have
\begin{align*}
\theta = d t + \sum_{\alpha=1}^n \ii \big( z^\alpha d \bar{z}^\alpha - \bar{z}^\alpha d z^\alpha  \big) , \mbox{ and } \  d \theta = 2 \ii d z^\alpha \wedge d \bar{z}^\alpha.
\end{align*}
One may choose the following frame and coframe fields
\begin{align*}
\eta_\alpha = \frac{ \partial }{\partial z^\alpha} + \ii \bar{z}^\alpha \frac{ \partial }{ \partial t }, \quad \mbox{and } \ \theta^\alpha = d z^\alpha.
\end{align*}
Since $d \theta^\alpha = d d \bar{z}^\alpha =0$, the connection coefficients $ \theta^\alpha_\beta =0 $ and the pseudo-Hermitian torsion $ \tau =0$. It is easy to verify that the pseudo-Hermitian Ricci curvature, the pseudo-Hermitian scalar curvature
and the Chern-Moser tensor of the Heisenberg group all vanish

Now we consider its CR conformal transformation $\tth = e^{2u} \theta$ with $u(z, t) = |z|^2 $. By Lemma \ref{alm1}, we have
\begin{align*}
\tilde{A}_{\lambda \mu} = -2 \ii \bar{z}^{\lambda} \bar{z}^{\mu} e^{-2 |z|^2 }
\end{align*}
and
\begin{gather*}
\tilde{R}_{\lambda \bm} =-4 (n+1) (1+ |z|^2 ) e^{-2 |z|^2}  \delta_{\lambda \bm}, \\
\tilde{\rho} = -4 n (n+1) (1+ |z|^2 ) e^{-2 |z|^2}.
\end{gather*}
Hence the pseudo-Hermitian manifold $( \mathbb{H}^n, H \mathbb{H}^n, J_b, \tth )$ is pseudo-Einstein too, but non-Sasakian. Since the Chern-Moser tensor changes conformally under the CR conformal transformation, the new Chern-Moser tensor also vanishes. However the pseudo-Hermitian scalar curvature of $\tth $ is nonconstant.
\end{exm}

A useful tool in Riemannian geometry is the Ricci identity for commuting covariant derivatives.
Naturally we need a similar formula for the Tanaka-Webster connection.
\begin{lem}[Ricci Identities]
\begin{align}
R_{\alpha \bb, \lambda \bm} - R_{\alpha \bb, \bm \lambda}= R_{\gamma \bb} R_{\bg \alpha \lambda \bm} + R_{\alpha \bg} R_{\gamma \bb \lambda \bm} + 2 \ii \delta_{\lambda \bm } R_{\alpha \bb, 0} \label{b1}
\end{align}
and
\begin{align}
C_{\ba \beta \varsigma \bm, \lambda \bg} -& C_{\ba \beta \varsigma \bm, \bg \lambda} = C_{\bn \beta \varsigma \bm} R_{\nu \ba  \lambda \bg} + C_{\ba \nu \varsigma \bm} R_{\bn \beta  \lambda \bg}  \nonumber \\
& + C_{\ba \beta \nu \bm} R_{\bn \varsigma  \lambda \bg} + C_{\ba \beta \varsigma \bn} R_{\nu \bm  \lambda \bg} + 2 \ii \delta_{\lambda \bg } C_{\ba \beta \varsigma \bm, 0 } . \label{d10}
\end{align}
\end{lem}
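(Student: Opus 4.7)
The plan is to derive both formulas as instances of a single Ricci-type commutator identity for the Tanaka-Webster connection, obtained by evaluating the curvature 2-form $\Pi^\alpha_\beta$ from (\ref{a8}) on the mixed pair $(\eta_\lambda, \eta_{\bm})$. Starting from the defining relation $[\nabla_X, \nabla_Y] - \nabla_{[X,Y]} = R(X,Y)$ with $X = \eta_\lambda$ and $Y = \eta_{\bm}$, I would first compute the $T$-component of $[\eta_\lambda, \eta_{\bm}]$. Since $\theta$ annihilates $HM$, we have $\theta([\eta_\lambda, \eta_{\bm}]) = -d\theta(\eta_\lambda, \eta_{\bm})$, and the structure equation $d\theta = 2\ii \theta^\alpha \wedge \theta^{\ba}$ gives $d\theta(\eta_\lambda, \eta_{\bm}) = 2\ii \delta_{\lambda \bm}$. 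This $T$-component of the bracket is precisely what produces the $2\ii \delta_{\lambda \bm} \nabla_T$ term on the right-hand sides of both (\ref{b1}) and (\ref{d10}).

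Next I would verify, by a type count, that among the four summands of $\Pi^\alpha_\beta$ in (\ref{a8}) only the purely horizontal piece $R^\alpha_{\beta \lambda \bm} \theta^\lambda \wedge \theta^{\bm}$ contributes when the 2-form is evaluated on $(\eta_\lambda, \eta_{\bm})$. Indeed, the torsion pieces $2\ii(\theta^\alpha \wedge \tau_\beta + \theta_\beta \wedge \tau^\alpha)$ are of pure bidegree $(2,0)$ or $(0,2)$ and therefore vanish on a mixed $(1,0)$--$(0,1)$ pair, while the $A$-derivative terms are wedged with $\theta$ and hence annihilate any vector in $HM$. Combining these two observations yields the general template
\begin{align*}
T_{\ldots, \lambda \bm} - T_{\ldots, \bm \lambda} = \sum_{\text{indices of } T} (\text{curvature acting on that index}) + 2\ii \delta_{\lambda \bm} T_{\ldots, 0}
\end{align*}
valid for an arbitrary Tanaka-Webster tensor $T$.

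The two identities then follow by applying this template slot by slot. For $T = R_{\alpha \bb}$ the $\alpha$-slot contributes $R_{\gamma \bb} R_{\bg \alpha \lambda \bm}$ and the $\bb$-slot contributes $R_{\alpha \bg} R_{\gamma \bb \lambda \bm}$, which together with the characteristic-direction term give (\ref{b1}). For $T = C_{\ba \beta \varsigma \bm}$ one obtains four analogous curvature contractions, one per index; relabeling dummy indices to match the paper's conventions then produces (\ref{d10}). The main obstacle is the bookkeeping of signs and of the placement of raised versus lowered indices, since the paper records the Webster curvature with all indices lowered whereas the curvature 2-form carries one raised index; reconciling these conventions by means of the Kronecker delta (which plays the role of the Webster metric in the orthonormal frame $\{\eta_\alpha\}$) is the only delicate step.
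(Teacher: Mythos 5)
Your proposal is correct and follows essentially the same route as the paper: the paper simply quotes the general commutation identity $(\nabla^2\sigma)(\dots;Y,X)-(\nabla^2\sigma)(\dots;X,Y)=\sum_i\sigma(\dots,R(Y,X)X_i,\dots)+(\nabla_{T_\nabla(Y,X)}\sigma)(\dots)$ for the Tanaka--Webster connection and reads off both identities slot by slot, with the torsion term $T_\nabla(\eta_\lambda,\eta_{\bm})=2\ii\delta_{\lambda\bm}T$ producing the $2\ii\delta_{\lambda\bm}(\cdot)_{,0}$ contribution exactly as you describe. Your additional verification via $\Pi^\alpha_\beta$ that only the purely horizontal piece of the curvature form survives on a mixed $(1,0)$--$(0,1)$ pair is a correct fleshing-out of a step the paper leaves implicit.
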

The above results are consequences of the following general identity: for any $ \sigma  \in \Gamma ( \otimes^q T^* M ) $ and $X, \: Y, \: X_1, \dots, X_q \in \Gamma ( TM ) $, we have
\begin{align*}
& \big( \nabla^2 \sigma \big) (X_1, \dots, X_p; Y, X) - \big( \nabla^2 \sigma \big) (X_1, \dots, X_p; X, Y) \\
& = - \big( R(Y, X)  \sigma \big) (X_1, \dots, X_p) + \big( \nabla_{T_{\nabla} (Y, X)} \sigma  \big) (X_1, \dots, X_p) \\
& =  \sum_{i=1}^q \sigma \big(X_1, \dots,  R (Y, X) X_i, \dots, X_q \big) + \big( \nabla_{T_{\nabla} (Y, X)} \sigma  \big) (X_1, \dots, X_p) .
\end{align*}

Analogous to the Laplace operator in Riemannian geometry, there is a degenerate elliptic operator in CR geometry which is called sub-Laplace operator. Let $(M, HM, J_b, \theta)$ be a strictly pseudoconvex CR manifold of dimension $2n+1$.  The nowhere vanishing form $\theta \wedge (d \theta)^n$ defines a volume form of $M$. Hence the divergence $div (X)$ of a vector field $X$  is given by
\begin{align*}
\mathcal{L}_X \theta \wedge (d \theta)^n = div (X) \theta \wedge (d \theta)^n
\end{align*}
where $\mathcal{L}$ denotes the Lie derivative.
\begin{dfn}
The sub-Laplace operator is the differential operator $\triangle_b$ defined by
\begin{align*}
\triangle_b u =  div (\nabla_b u), \quad \mbox{for any } \ u \in C^\infty (M)
\end{align*}
where $\nabla_b u = \pi_H \nabla u$ is the horizontal part of $\nabla u$.
\end{dfn}

\begin{lem}[\cite{dragomir2006differential}]
Under the above notions, we have
\begin{align*}
\triangle_b u = u_{\alpha \ba} + u_{\ba \alpha}, \quad \mbox{for any } \ u \in C^\infty (M).
\end{align*}
\end{lem}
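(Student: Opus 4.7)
The plan is to unwind the definition of divergence and translate it into the Tanaka-Webster frame $\{\eta_\alpha, \eta_{\ba}, T\}$. I would reduce the identity to two independent statements: (a) the Tanaka-Webster connection $\nabla$ preserves the volume form $\theta \wedge (d\theta)^n$, so the divergence of any vector field is computable via $\nabla$ up to a torsion correction; and (b) that torsion correction vanishes on horizontal vector fields, whereupon the trace $\operatorname{tr}(\nabla \nabla_b u)$ reproduces the stated expression.

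For (a), since $HM$ is $\nabla$-parallel and $g_\theta$ is $\nabla$-parallel, the orthogonal complement $\mathbb{R} T$ is also parallel; combined with $g_\theta(T,T)=1$ this forces $\nabla T = 0$, hence $\nabla \theta = 0$. Because $d\theta$ is determined by its restriction to $HM \times HM$, which is in turn expressible through the parallel tensors $G_\theta$ and $J_b$, we get $\nabla d\theta = 0$ and so $\nabla(\theta \wedge (d\theta)^n) = 0$. Comparing $\mathcal{L}_X \mu$ with $\nabla X$ on a $\nabla$-parallel volume form $\mu$ and using $[X, Y] = \nabla_X Y - \nabla_Y X - T_{\nabla}(X,Y)$ then yields
\begin{align*}
\operatorname{div}(X) = \operatorname{tr}(\nabla X) + \sum_a \omega^a\bigl(T_\nabla(X, e_a)\bigr),
\end{align*}
for any local frame $\{e_a\}$ with dual coframe $\{\omega^a\}$.

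For (b), I would apply this with $X = \nabla_b u$ and the frame $\{\eta_\alpha, \eta_{\ba}, T\}$. Since the Tanaka-Webster torsion maps $HM \times HM$ into $\mathbb{R}T$ and $\tau$ maps $HM$ into $HM$, each summand of the torsion correction is annihilated by the corresponding coframe element, so the correction vanishes. It then remains to evaluate $\operatorname{tr}(\nabla \nabla_b u)$: using $g_\theta(\nabla u, \cdot) = du(\cdot)$ with the pairing $g_\theta(\eta_\alpha, \eta_{\bb}) = \delta_{\alpha\beta}$ one writes $\nabla_b u = u_{\ba}\eta_\alpha + u_\alpha \eta_{\ba}$. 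Because $\nabla$ preserves the type decomposition of $HM\otimes\mathbb{C}$ and $\nabla_T \nabla_b u$ remains horizontal, only the $\theta^\alpha(\nabla_{\eta_\alpha}\cdot)$ and $\theta^{\ba}(\nabla_{\eta_{\ba}}\cdot)$ traces contribute; expanding these using $\nabla_{\eta_\alpha} \eta_\beta = \theta^\gamma_\beta(\eta_\alpha)\eta_\gamma$ together with the skew-symmetry $\theta^\alpha_\beta + \theta^{\bb}_{\ba} = 0$ matches exactly the definition of the covariant Hessian entries $u_{\ba \alpha}$ and $u_{\alpha \ba}$, giving the claim after summation.

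The main subtlety is checking that the various normalization conventions (the factor $2\ii$ in $d\theta = 2\ii\, \theta^\alpha \wedge \theta^{\ba}$, the choice of ``orthonormal'' frame, and the pairing of $\eta_\alpha$ with $\eta_{\bb}$) are mutually consistent so that the final coefficient is exactly one and not an extraneous multiple; once this bookkeeping is in place the computation is routine.
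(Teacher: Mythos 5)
The paper offers no proof here at all---the lemma is simply quoted from Dragomir--Tomassini---so the only meaningful comparison is with the standard derivation, and that is essentially what you have reconstructed. Both of your reductions are correct and correctly justified. For (a): since $HM$ and $g_\theta$ are $\nabla$-parallel, so is $\mathbb{R}T=HM^{\perp}$, and $g_\theta(\nabla_X T,T)=\tfrac12 Xg_\theta(T,T)=0$ forces $\nabla T=0$ and hence $\nabla\theta=0$; moreover $T\lrcorner\, d\theta=0$ gives $d\theta(X,Y)=-G_\theta(\pi_HX,J_b\pi_HY)$, a composite of parallel tensors, so the volume form is parallel and your formula $\operatorname{div}(X)=\operatorname{tr}(\nabla X)+\sum_a\omega^a\bigl(T_\nabla(X,e_a)\bigr)$ follows from $[X,Y]=\nabla_XY-\nabla_YX-T_\nabla(X,Y)$. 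For (b): with $X$ horizontal, $T_\nabla(X,e_a)=2d\theta(X,e_a)T$ for horizontal $e_a$ is killed by $\theta^\alpha,\theta^{\ba}$, while $T_\nabla(X,T)=-\tau X$ is horizontal and killed by $\theta$; and $\theta(\nabla_TX)=0$ because $HM$ is parallel, so the $T$-slot contributes nothing to the trace either. The one caveat you flag is genuine and is the only place needing care: with the paper's literal conventions ($d\theta=2\ii\,\theta^\alpha\wedge\theta^{\ba}$ together with $G_\theta(X,Y)=d\theta(X,J_bY)$), a frame dual to $\{\theta^\alpha\}$ satisfies $g_\theta(\eta_\alpha,\eta_{\bb})=2\delta_{\alpha\beta}$, so writing $\nabla_bu=u_{\ba}\eta_\alpha+u_\alpha\eta_{\ba}$ with no compensating factor tacitly assumes the normalization $L_\theta(\eta_\alpha,\eta_{\bb})=\delta_{\alpha\beta}$ under which the cited source actually proves the identity. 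You should fix one normalization and carry it through consistently; once that is done your argument is complete, and the discrepancy is bookkeeping rather than a gap.
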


The sub-Laplace operator enjoys a similar maximum principle as the Laplace operator.

\begin{lem}[Bony's Maximum Principle \cite{bony1969principe}; also cf. \cite{jost1998subelliptic}] \label{alm3}
Let $(M, HM, J_b, \theta)$ be a strictly pseudoconvex CR manifold and $\Omega $ an open set of $M$. Suppose that $u \in C^2 (\Omega) $ satisfies $\triangle_b u \geq 0$. If there exists $x_0 \in \Omega$ such that $ 0 \leq u(x_0) = \sup_\Omega u < + \infty $, then $u \equiv u(x_0)$ on $\Omega$. In particular, if $M$ is compact, then $u$ must be constant.
\end{lem}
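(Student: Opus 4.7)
The strategy is to recognize $\triangle_b$ locally as a sum-of-squares operator of H\"ormander type, and then invoke Bony's strong maximum principle for such operators. The argument splits into local representation, the bracket-generating property, and horizontal propagation from the maximum.

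First, I would fix $x_0 \in \Omega$ and, in a small neighborhood $U \subset \Omega$, choose a $g_\theta$-orthonormal frame $\{e_1, \dots, e_{2n}\}$ of $HM$. Since $\nabla_b u = \sum_k (e_k u) e_k$, unwinding the definition of the divergence against the volume form $\theta \wedge (d\theta)^n$ gives
\begin{align*}
\triangle_b u = \sum_{k=1}^{2n} e_k^2 u + b(u),
\end{align*}
where $b$ is a smooth horizontal vector field built from the Tanaka-Webster connection coefficients. Thus locally $\triangle_b$ is a sum of squares plus a smooth first-order drift, with no zero-order term.

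Second, I would verify H\"ormander's condition. For $X, Y \in \Gamma(HM)$, $\theta(X) = \theta(Y) = 0$ gives $\theta([X, Y]) = -d\theta(X, Y)$. Strict pseudoconvexity means $d\theta$ is nondegenerate on $HM \otimes HM$, so at each point some bracket $[e_i, e_j]$ has a non-trivial $T$-component. Hence $\{e_1, \dots, e_{2n}\}$ is bracket generating of step $2$, and by Chow-Rashevskii any two points of a connected open subset can be joined by a piecewise-horizontal curve. I would then appeal to Bony's strong maximum principle for degenerate operators satisfying H\"ormander's condition: a $C^2$ subsolution attaining a non-negative interior maximum at $x_0$ must be constant along the integral curve of every horizontal vector field through $x_0$. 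Iterating along piecewise-horizontal paths, the set $\{x \in \Omega : u(x) = u(x_0)\}$ is both open and closed in the component of $x_0$, forcing $u \equiv u(x_0)$.

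The main obstacle is precisely the propagation step, since the degeneracy of $\triangle_b$ invalidates the classical Hopf boundary lemma and prevents the usual ``ball comparison'' argument. Bony's resolution is a careful barrier construction adapted to the anisotropic geometry of $HM$: around a point of a horizontal integral curve one builds an exponential-type barrier $\varphi$ satisfying $(\sum e_k^2 + b)\varphi > 0$ on a narrow ``carrot-shaped'' neighborhood whose boundary touches the curve from one side, and one applies the weak maximum principle to $u - u(x_0) + \varepsilon\varphi$ to force $u$ to remain at its maximum along every horizontal direction. The assumption $u(x_0) \geq 0$ enters only when normalizing this barrier against the first-order drift $b$; it can alternatively be bypassed by the substitution $v = e^{\lambda u}$ for large $\lambda$. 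The compact case follows since a continuous function on a compact manifold attains its (necessarily non-negative, after adding a constant) maximum.
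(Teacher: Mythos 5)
The paper offers no proof of this lemma at all: it is quoted verbatim as a known result of Bony, with a pointer to Jost--Xu for the sub-Riemannian setting, so there is no internal argument to compare yours against. Your sketch is nevertheless a correct account of how the cited result is established, and it correctly supplies the one piece of geometry that is specific to this paper's setting: writing $\triangle_b$ in a local orthonormal frame of $HM$ as $\sum_k e_k^2 + b$ with a horizontal drift $b=-\sum_k\nabla_{e_k}e_k$ (legitimate because the Tanaka--Webster connection preserves $HM$ and $g_\theta$), and checking H\"ormander's condition via $\theta([X,Y])=-2\,d\theta(X,Y)$ together with the nondegeneracy of the Levi form, so that $HM$ is step-$2$ bracket generating and Chow--Rashevskii gives horizontal connectivity for the propagation argument. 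One small inaccuracy: since the local expression of $\triangle_b$ has no zero-order term, the hypothesis $u(x_0)\geq 0$ is actually vacuous here (one may add a constant to $u$ without affecting $\triangle_b u$); it appears in the statement only because Bony's general theorem allows a nonpositive zero-order coefficient, not because the drift $b$ requires any normalization of the barrier. This does not affect the validity of your argument, and deferring the barrier construction itself to Bony's original paper is appropriate for a result the authors themselves only cite.
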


Since $(M, g_\theta)$ is also a Riemannian manifold, we denote by $\triangle$ its Laplace operator, $r$ the Riemannian distance function to a fixed point $x_0 \in M$ and $B_r$  the ball of radius $r$ centered at $x_0$.

\section{A general gap theorems for subelliptic differential inequalities}
In this section, we establish a general gap result for a special class of subelliptic differential inequalities on a complete pseudo-Hermitian manifold; this  result will be applied to  get some rigidity theorems of Sasakian pseudo-Einstein manifolds and Sasakian space forms later. Our method is basically a partial process of the Moser's iteration which is similar to that one used in \cite{pigola2008vanishing} for some elliptic differential inequalities.

\begin{prp} \label{blm1-1}
Let $(M, HM ,J_b, \theta)$ be a complete noncompact strictly pseudoconvex CR manifold and assume that, for some $0 <  \alpha < 1$ and some function h, the CR Sobolev-type inequality
\begin{align}
\int_M ( |\nabla_b \varphi|^2 + h \varphi^2) \geq S(\alpha)^{-1} \left( \int_M |\varphi|^{\frac{2}{1-\alpha}} \right)^{1-\alpha} \label{f6-1}
\end{align}
holds for every $\varphi \in C_c^\infty (M)$ with a positive constant $S(\alpha ) >0$. Suppose that $0 \neq \psi \in Lip_{loc} (M)$ is a nonnegative solution of
\begin{align}
\psi \triangle_b \psi + a(x) \psi^2 + B |\nabla_b \psi|^2 \geq 0 \quad \mbox{ weakly on} \ M  \label{f7-1}
\end{align}
satisfying
\begin{align}
\int_{B_r} |\psi|^\sigma = o (r^2) \quad as \quad r \rightarrow +\infty  \label{f8-1}
\end{align}
with $B \in \mathbb{R}, \ \sigma -B -1 >0, \ \sigma \geq 2$ , and $a(x) \in C^0(M)$. Then for any $\delta >0$ and $0 < \epsilon < \sigma -B -1$, we have
\begin{align}
\left| \left|   \left( a (x) +C_{\delta, \epsilon} h \right)_+ \right| \right|_{L^{\frac{1}{\alpha}} (M)} \geq C_{\delta, \epsilon} S(\alpha)^{-1}  \label{f9-1}
\end{align}
where $C_{\delta, \epsilon} = 4 \sigma^{-2} (1+ \delta)^{-1} (\sigma -B -1 - \epsilon)$.
\end{prp}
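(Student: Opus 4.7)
\textbf{Proof plan for Proposition \ref{blm1-1}.}

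The plan is to run one step of a Moser--type iteration, combining a weighted integral estimate obtained from the differential inequality \eqref{f7-1} with the CR Sobolev inequality \eqref{f6-1}, and then exhaust $M$ using compactly supported cutoffs whose gradient contribution is killed by the volume growth assumption \eqref{f8-1}.

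First I would fix a Lipschitz cutoff $\eta$ with compact support and test the weak inequality \eqref{f7-1} against $\psi^{\sigma-1}\eta^2\in\mathrm{Lip}_c(M)$. Integrating by parts the first term $\int\psi^{\sigma-1}\eta^2\triangle_b\psi$ and collecting gradient terms produces
\begin{equation*}
(\sigma-1-B)\int\psi^{\sigma-2}\eta^2|\nabla_b\psi|^2 \leq -2\int\psi^{\sigma-1}\eta\,\langle\nabla_b\psi,\nabla_b\eta\rangle + \int a(x)\,\psi^\sigma\eta^2.
\end{equation*}
A Young inequality $2|\psi^{\sigma-1}\eta\langle\nabla_b\psi,\nabla_b\eta\rangle|\le \varepsilon\psi^{\sigma-2}\eta^2|\nabla_b\psi|^2+\varepsilon^{-1}\psi^\sigma|\nabla_b\eta|^2$ with a small enough $\varepsilon$ absorbs the cross term on the left, yielding a bound for $\int\psi^{\sigma-2}\eta^2|\nabla_b\psi|^2$ in terms of $\int a\psi^\sigma\eta^2$ and $\int\psi^\sigma|\nabla_b\eta|^2$, with the explicit constant $(\sigma-1-B-\varepsilon)$ in front, which is where the factor $\sigma-B-1-\epsilon$ in $C_{\delta,\epsilon}$ enters.

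Next I would apply the Sobolev-type inequality \eqref{f6-1} to $\varphi=\psi^{\sigma/2}\eta$. Writing out $|\nabla_b\varphi|^2$ and using $(a+b)^2\le(1+\delta)a^2+(1+\delta^{-1})b^2$ gives
\begin{equation*}
|\nabla_b\varphi|^2 \leq (1+\delta)\frac{\sigma^2}{4}\psi^{\sigma-2}\eta^2|\nabla_b\psi|^2 + (1+\delta^{-1})\psi^\sigma|\nabla_b\eta|^2,
\end{equation*}
which, together with the previous weighted $L^2$ gradient estimate, yields after rearrangement
\begin{equation*}
C_{\delta,\epsilon}S(\alpha)^{-1}\left(\int\psi^{\sigma/(1-\alpha)}\eta^{2/(1-\alpha)}\right)^{1-\alpha} \leq \int\bigl(a(x)+C_{\delta,\epsilon}h\bigr)\psi^\sigma\eta^2 + K\int\psi^\sigma|\nabla_b\eta|^2,
\end{equation*}
where $C_{\delta,\epsilon}=4\sigma^{-2}(1+\delta)^{-1}(\sigma-B-1-\epsilon)$ and $K$ is a constant depending only on $\delta,\epsilon,\sigma,B$. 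On the right side I replace $a+C_{\delta,\epsilon}h$ by its positive part and apply Hölder with conjugate exponents $1/\alpha$ and $1/(1-\alpha)$ to obtain
\begin{equation*}
\int(a+C_{\delta,\epsilon}h)\psi^\sigma\eta^2 \leq \left\|\bigl(a+C_{\delta,\epsilon}h\bigr)_+\right\|_{L^{1/\alpha}(M)}\left(\int\psi^{\sigma/(1-\alpha)}\eta^{2/(1-\alpha)}\right)^{1-\alpha}.
\end{equation*}

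Finally I choose $\eta$ to be a standard cutoff equal to $1$ on $B_r$ and vanishing outside $B_{2r}$, with $|\nabla_b\eta|\le C/r$. Then $K\int\psi^\sigma|\nabla_b\eta|^2\le (KC^2/r^2)\int_{B_{2r}}|\psi|^\sigma = o(1)$ by \eqref{f8-1}. Since $\psi\not\equiv0$, the quantity $\left(\int_{B_r}\psi^{\sigma/(1-\alpha)}\right)^{1-\alpha}$ is bounded away from $0$ once $r$ is large, so dividing through and letting $r\to\infty$ delivers \eqref{f9-1}.

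I expect the only genuinely delicate point to be the bookkeeping of the Young/Cauchy inequalities so that the final constant in front of the Sobolev term comes out exactly as $C_{\delta,\epsilon}$; the rest is a transcription of the classical Moser test-function argument to the sub-Riemannian setting, with $\triangle_b$, $\nabla_b$, and the CR Sobolev inequality \eqref{f6-1} replacing their Riemannian counterparts, and the growth assumption \eqref{f8-1} playing the role usually played by global $L^\sigma$ boundedness of $\psi$.
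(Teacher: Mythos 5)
Your proposal is correct and follows essentially the same route as the paper's proof: test the weak inequality with $\psi^{\sigma-2}\phi_r^2$ to get the weighted gradient estimate with constant $\sigma-B-1-\epsilon$, apply the CR Sobolev inequality to $\psi^{\sigma/2}\phi_r$ with the $(1+\delta)$ splitting, use H\"older on the potential term, and let $r\to\infty$ using \eqref{f8-1}. The only detail you gloss over is that the paper regularizes the test function as $(\psi+\eta)^{\sigma-2}\phi_r^2$ and passes to the limit $\eta\to 0$ by dominated convergence, which is needed to make the test function admissible near the zero set of $\psi$ when $2\le\sigma<3$.
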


\begin{proof}
By \eqref{f7-1}, we know that for any nonnegative test function $\chi \in W^{1,2}_c (M)$,
\begin{align*}
\int_M a \psi^2 \chi \geq  \int_M \psi \la \nabla_b \psi, \nabla_b \chi \ra + (1-B) \chi |\nabla_b \psi|^2 .
\end{align*}
Let $\phi_r \in C^\infty_c (B_{2r})$ be a family of cutoff functions with
\begin{align*}
\phi_r =1 \quad \mbox{on} \ B_r, \quad \mbox{and} \  |\nabla \phi_r| \leq \frac{2}{r} \quad \mbox{on} \   M
\end{align*}
which implies $|\nabla_b \phi_r| \leq |\nabla \phi_r| \leq \frac{2}{r} $ on $M$. Fix $\eta >0 $. Then the test function $\chi= (\psi+\eta)^{\sigma-2} \: \phi^2_r$ is a Lipschitz function. Applying the CR Sobolev-type inequality \eqref{f6-1} to it, we derive
\begin{align*}
\int_M a \psi^2 (\psi+ \eta)^{\sigma-2} \phi^2_r &\geq \int_M  2 (\psi+\eta)^{\sigma -2} \phi_r \psi \la \nabla_b \psi, \nabla_b \phi_r  \ra  \\
& \quad + \int_M  \left[(1-B) + (\sigma -2) \frac{\psi}{\psi + \eta} \right] (\psi+\eta)^{\sigma -2} \phi^2_r |\nabla_b \psi|^2 .
\end{align*}
By Lebesgue dominated convergence theorem and letting $\eta \rightarrow 0$, the above inequality becomes
\begin{align*}
\int_M a \psi^{\sigma} \phi_r^2 &\geq \int_M  2 \psi^{\sigma -1} \phi_r\la \nabla_b \psi, \nabla_b \phi_r  \ra + \int_M  [(1-B) + (\sigma -2) ] \psi^{\sigma -2} \phi_r^2 |\nabla_b \psi|^2 .
\end{align*}
Using Cauchy-Schwarz inequality, we have
\begin{align*}
 (\sigma -B -1 - \epsilon)  \int_M \phi_r^2 \psi^{\sigma -2} |\nabla_b \psi|^2 \leq \int_M a \psi^{\sigma} \phi_r^2 + \frac{1}{\epsilon} \int_M \psi^{\sigma } |\nabla_b \phi_r|^2 .
\end{align*}
When $\sigma \geq 2$,  $\psi^{\frac{\sigma}{2}} \phi_r$ is a Lipschitz function. Applying the CR Sobolev-type inequality \eqref{f6-1} to it, we obtain
\begin{align}
& S(\alpha)^{-1} \left( \int_M (\psi^{\frac{\sigma}{2}} \phi_r )^{\frac{2}{1- \alpha}} \right)^{1- \alpha} \leq \int_M |\nabla_b ( \psi^{\frac{\sigma}{2}} \phi_r) |^2  + h \phi_r^2 \psi ^{\sigma} \nonumber \\
& \quad \leq (1 + \delta) \frac{\sigma^2}{4} \int_M \psi^{\sigma -2} \phi_r^2 |\nabla_b \psi|^2 + (1+ \frac{1}{\delta}) \int_M \psi^{\sigma} | \nabla_b \phi_r|^2 + \int_M  h \phi_r^2 \psi^{\sigma} \nonumber \\
& \quad \leq C_{\delta, \epsilon}^{-1} \int_M (a + h C_{\delta, \epsilon})_+ \psi^{\sigma} \phi_r^2 + ( C_{\delta, \epsilon}^{-1} \epsilon^{-1} +1+ \frac{1}{\delta}) \int_M \psi^{\sigma} | \nabla_b \phi_r|^2 \label{f1}
\end{align}
where $(a + h C_{\delta, \epsilon})_+$ is the nonnegative part of $a + h C_{\delta, \epsilon}$ and
\begin{align*}
C_{\delta, \epsilon} = \frac{4}{\sigma^2} \frac{\sigma -B -1 - \epsilon}{ 1+ \delta} .
\end{align*}
But H\"older's inequality implies that
\begin{align*}
\int_M (a + h C_{\delta, \epsilon})_+ \psi^{\sigma} \phi_r^2 \leq || (a + h C_{\delta, \epsilon} )_+ ||_{L^{\frac{1}{\alpha}} (B_{2r})} \left( \int_M (\psi^{\frac{\sigma}{2}} \phi_r )^{\frac{2}{1- \alpha}} \right)^{1- \alpha} .
\end{align*}
Substituting the above inequality to \eqref{f1}, then
\begin{align*}
& \left[ S(\alpha)^{-1} - C_{\delta, \epsilon}^{-1} || (a + h C_{\delta, \epsilon})_+ ||_{L^{\frac{1}{\alpha}} (B_{2r})}  \right] \left( \int_M (\psi^{\frac{\sigma}{2}} \phi_r )^{\frac{2}{1- \alpha}} \right)^{1- \alpha} \\
& \qquad  \leq ( C_{\delta, \epsilon}^{-1} \epsilon^{-1} +1+ \frac{1}{\delta}) \int_M \psi^{\sigma} | \nabla_b \phi_r|^2 \leq ( C_{\delta, \epsilon}^{-1} \epsilon^{-1} +1+ \frac{1}{\delta})  \frac{4}{r^2} \int_{B_{2r}} \psi^{\sigma} .
\end{align*}
Since $\psi \neq 0$, we let $r \rightarrow + \infty$ and use the assumption \eqref{f8-1} to find
\begin{align*}
|| (a + h C_{\delta, \epsilon})_+ ||_{L^{\frac{1}{\alpha}} (M)} \geq S(\alpha)^{-1}  C_{\delta, \epsilon} .
\end{align*}
\end{proof}

\begin{rmk}
One can also obtain a similar  result for $0< \sigma < 2$ by using the method in \cite{pigola2008vanishing}. Since we do not need it in this paper, we omit its details here.
\end{rmk}

In \cite{jerison1987yamabe},  D. Jerison and J. M. Lee introduced the following CR Yamabe constant
\begin{align}
\lambda(M) = \inf_{ \substack{ 0 \neq  u  \in  C^{\infty} (M) \\ supp \: u \Subset  M } } \frac{\int_M ( b_n|\nabla_b u|^2 + \rho u^2 ) \theta \wedge (d \theta)^n }{(\int_M |u|^p \theta \wedge (d \theta)^n  )^{\frac{2}{p}} } \label{f2}
\end{align}
where $b_n=p=2 + \frac{2}{n}$.
Hence if $\lambda(M)$ is positive, it provides a class of CR Sobolev-type inequality \eqref{f6-1} with
$$
h = \frac{n}{2n+2} \rho, \ \alpha = \frac{1}{n+1} \mbox{ and }  S(\alpha) = (2+ \frac{2}{n}) \lambda (M)^{-1}.
$$
It is known that the CR Yamabe constant of Heisenberg group and the odd dimension sphere are positive (cf. \cite{dragomir2006differential,jerison1987yamabe}). Combining \eqref{f2} with Proposition \ref{blm1-1}, we have the following corollary.

\begin{cor} \label{blm1}
Let $(M, HM ,J_b, \theta)$ be a complete noncompact strictly pseudoconvex CR manifold with positive CR Yamabe constant. Suppose that $\psi \in Lip_{loc} (M)$ is a nonnegative solution of
\begin{align}
\psi \triangle_b \psi + a(x) \psi^2 + B |\nabla_b \psi|^2 \geq 0 \quad \mbox{ weakly on} \ M  \label{f7}
\end{align}
satisfying
\begin{align}
\int_{B_r} |\psi|^\sigma = o (r^2) \quad as \quad r \rightarrow +\infty  \label{f8}
\end{align}
with $B \in \mathbb{R}, \ \sigma -B -1 >0, \ \sigma \geq 2$ , and $a(x) \in C^0(M)$. If there are $\delta >0$ and $0 < \epsilon < \sigma -B -1$ such that
\begin{align}
\left| \left|   \left( a (x) + \tilde{C}_{\delta, \epsilon} \rho \right)_+ \right| \right|_{L^{n+1 } (M)} < \tilde{C}_{\delta, \epsilon} \lambda (M)  \label{f9}
\end{align}
where $\tilde{C}_{\delta, \epsilon} = \frac{2n}{n+1} \frac{\sigma -B -1 -\epsilon }{\sigma^2 (1+ \delta)}  $, then $\psi \equiv 0$.
\end{cor}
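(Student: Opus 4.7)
The plan is to derive Corollary \ref{blm1} directly from Proposition \ref{blm1-1} by specializing the Sobolev-type inequality \eqref{f6-1} to the one furnished by the positivity of the CR Yamabe constant, and then arguing by contradiction against the hypothesis \eqref{f9}.

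First I would unpack the definition \eqref{f2}. Positivity of $\lambda(M)$ means that for every $\varphi \in C_c^{\infty}(M)$ we have
\begin{align*}
\int_M \left( b_n |\nabla_b \varphi|^2 + \rho \varphi^2 \right) \theta \wedge (d\theta)^n \geq \lambda(M) \left( \int_M |\varphi|^p \theta \wedge (d\theta)^n \right)^{2/p},
\end{align*}
with $b_n = p = 2 + 2/n$. Dividing through by $b_n$ rewrites this in the exact form of \eqref{f6-1} with the parameters
\begin{align*}
h = \frac{n}{2(n+1)} \rho, \qquad \alpha = \frac{1}{n+1}, \qquad S(\alpha) = \frac{2(n+1)}{n\,\lambda(M)},
\end{align*}
so that $1/\alpha = n+1$ and $S(\alpha)^{-1} = \tfrac{n}{2(n+1)} \lambda(M)$.

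Next I would apply Proposition \ref{blm1-1} to $\psi$: the structural assumptions \eqref{f7} and \eqref{f8} match \eqref{f7-1} and \eqref{f8-1}, and the constants $B, \sigma, \delta, \epsilon$ satisfy the stated hypotheses. If $\psi \not\equiv 0$, the proposition yields
\begin{align*}
\left\| \left( a(x) + C_{\delta,\epsilon}\, h \right)_+ \right\|_{L^{1/\alpha}(M)} \geq C_{\delta,\epsilon}\, S(\alpha)^{-1},
\end{align*}
with $C_{\delta,\epsilon} = 4\sigma^{-2}(1+\delta)^{-1}(\sigma - B - 1 - \epsilon)$. Plugging in the values of $h$, $\alpha$ and $S(\alpha)$ and observing that
\begin{align*}
C_{\delta,\epsilon} \cdot \frac{n}{2(n+1)} = \frac{2n}{n+1} \cdot \frac{\sigma - B - 1 - \epsilon}{\sigma^2(1+\delta)} = \tilde{C}_{\delta,\epsilon},
\end{align*}
the displayed estimate becomes exactly
\begin{align*}
\left\| \left( a(x) + \tilde{C}_{\delta,\epsilon}\, \rho \right)_+ \right\|_{L^{n+1}(M)} \geq \tilde{C}_{\delta,\epsilon}\, \lambda(M),
\end{align*}
which contradicts the hypothesis \eqref{f9}. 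Therefore $\psi \equiv 0$.

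I do not anticipate any real obstacle: once the bookkeeping of constants is performed, the corollary is a formal consequence of the proposition. The only mild care needed is to check that the CR Yamabe inequality indeed provides \eqref{f6-1} with positive $S(\alpha)$ (which uses $\lambda(M) > 0$) and that the arithmetic identification of $C_{\delta,\epsilon} \cdot S(\alpha)^{-1}$ and $C_{\delta,\epsilon} \cdot \tfrac{n}{2(n+1)}$ with $\tilde{C}_{\delta,\epsilon} \lambda(M)$ and $\tilde{C}_{\delta,\epsilon}$ respectively is correct.
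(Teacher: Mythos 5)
Your proposal is correct and is essentially identical to the paper's own (largely implicit) argument: the paper likewise obtains the corollary by rewriting the positive CR Yamabe inequality \eqref{f2} as the Sobolev-type inequality \eqref{f6-1} with $h=\frac{n}{2n+2}\rho$, $\alpha=\frac{1}{n+1}$, $S(\alpha)=(2+\frac{2}{n})\lambda(M)^{-1}$, and then invoking Proposition \ref{blm1-1}. Your constant bookkeeping, including the identification $C_{\delta,\epsilon}\cdot\frac{n}{2(n+1)}=\tilde{C}_{\delta,\epsilon}$, checks out.
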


\section{Rigidity Theorems of Pseudo-Einstein manifolds} \label{s1}
In this section, we consider a complete Sasakian manifold with
constant pseudo-Hermitian scalar curvature. First, under some suitable $L^p$ conditions or pinching conditions on the Chern-Moser tensor and
the traceless  pseudo-Hermitian Ricci tensor, we prove that such Sasakian manifolds must be pseudo-Einstein.
Second, we use the maximum principle
to prove that when the Chern-Moser tensor and the traceless pseudo-Hermitian Ricci tensor
satisfy some $L^{\infty}$ pinching condition, then $(M, HM, J_b, \theta )$ must be pseudo-Einstein too.
Finally, we give a simple proof to show that if a compact Sasakian manifold has constant pseudo-Hermitian scalar curvature and quasi-positive orthogonal pseudo-Hermitian sectional curvature, then it is pseudo-Einstein.

Let $(M, HM, J_b, \theta)$ be a $(2n+1)$-Sasakian manifold with constant pseudo-Hermitian scalar curvature. Since the pseudo-Hermitian torsion vanishes, the contracted identity \eqref{a2} implies that $R_{\alpha \bb,0}=0$. Hence \eqref{b1} yields
\begin{align}
E_{\alpha \bb, \lambda \bm} - E_{\alpha \bb, \bm \lambda}= E_{\gamma \bb} R_{\bg \alpha \lambda \bm} + E_{\alpha \bg} R_{\gamma \bb \lambda \bm}.
\end{align}
The Codazzi equation, i.e. $E_{\alpha \bb, \gamma} = E_{\gamma \bb, \alpha}$, follows from \eqref{a1} and the constancy of the pseudo-Hermitian scalar curvature. By direct calculations, we have
\begin{align}
\frac{1}{2} \triangle_b |E_{\alpha \bb}|^2 = 2 E_{\alpha \bb, \gamma} E_{\ba \beta, \bg} + E_{\alpha \bb, \gamma \bg} E_{\ba \beta} + \overline{E_{\alpha \bb, \gamma \bg} E_{\ba \beta}} \label{c1}
\end{align}
and
\begin{align}
E_{\alpha \bb, \gamma \bg} &= E_{\gamma \bb, \alpha \bg} = E_{\gamma \bb, \bg \alpha} + E_{\lambda \bb} R_{\bl \gamma \alpha \bg} + E_{\gamma \bl } R_{\lambda \bb \alpha \bg} \nonumber \\
&= E_{\lambda \bb} R_{\alpha \bl} + E_{\gamma \bl } R_{\lambda \bb \alpha \bg} .  \label{c2}
\end{align}
Substituting \eqref{c2} and \eqref{c3} into \eqref{c1}, we find
\begin{align}
\frac{1}{2} \triangle_b |E|^2
=  |\nabla_b E|^2  + \frac{2n}{n+2} tr_{G_\theta} E^3  - 4 E_{\gamma \bl } C_{ \bb \lambda \alpha \bg} E_{\ba \beta}
 + \frac{2 \rho}{n+1} |E|^2  \label{c4}
\end{align}
where $\nabla_b E$ is the horizontal part of $\nabla E$ and $tr_{G_\theta} E^3 = 2 E_{\alpha \bl} E_{\lambda \bb} E_{\ba \beta}$.
We use the method in \cite{hebey1996effectivel} to obtain the Kato inequality of the traceless pseudo-Hermitian Ricci tensor $E$.

\begin{lem}
If $(M, HM, J, \theta)$ is a Sasakian manifold of dimension $ 2 n+1 \geq 5 $ and with constant pseudo-Hermitian scalar curvature, then
\begin{align}
\frac{1}{4} |\nabla_b | E|^2 |^2 \leq \frac{n}{n+1} |E|^2 |\nabla_b E|^2 . \label{c5}
\end{align}
\end{lem}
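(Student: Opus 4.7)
The goal is to prove a refined Kato-type inequality, improving the constant $1$ obtained from plain Cauchy--Schwarz to the factor $\tfrac{n}{n+1}$ in the statement. The improvement relies on two structural properties of $E$ established in the preceding discussion: $E$ is Hermitian and traceless ($\sum_\alpha E_{\alpha \ba}=0$), and satisfies the Codazzi-type identity $E_{\alpha \bb, \gamma} = E_{\gamma \bb, \alpha}$ (a consequence of the contracted second Bianchi identity \eqref{a1} with $A\equiv 0$ and $\rho$ constant).

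My plan is pointwise. At a point $p$ where $|E|(p)\neq 0$, I would pick a unitary frame $\{\eta_\alpha\}$ of $T_{1,0}M$ diagonalizing $E$, so $E_{\alpha\bb}(p) = \lambda_\alpha\delta_{\alpha\beta}$ with $\sum_\alpha \lambda_\alpha = 0$. A direct computation using the product rule on $|E|^2 = E_{\alpha\bb}\overline{E_{\alpha\bb}}$, combined with the conjugate-Codazzi relation $\overline{E_{\alpha \ba, \bar\gamma}} = E_{\gamma \ba, \alpha}$ (obtained by conjugating the Codazzi identity), collapses the two terms arising from the product rule into the single expression
\begin{align*}
\partial_\gamma |E|^2 \bigr|_p = 2\sum_\alpha \lambda_\alpha E_{\gamma \ba, \alpha}.
\end{align*}
Squaring and summing over $\gamma$ reduces the left-hand side to controlling $\sum_\gamma \bigl|\sum_\alpha \lambda_\alpha E_{\gamma \ba, \alpha}\bigr|^2$.

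The decisive step is a refined Cauchy--Schwarz. A naive Cauchy--Schwarz in $\alpha$ bounds this by $\sum_\gamma(\sum_\alpha \lambda_\alpha^2)(\sum_\alpha |E_{\gamma\ba,\alpha}|^2)$, which only gives constant $1$. The refinement to $\tfrac{n}{n+1}$ uses the orthogonality conditions (i) $\sum_\alpha \lambda_\alpha = 0$ (tracelessness of $E$) and (ii) the induced divergence identity $\sum_\alpha E_{\gamma\ba,\alpha} = 0$ (obtained by differentiating $\sum_\alpha E_{\alpha\ba}=0$ and invoking Codazzi), together with the fact that the off-diagonal components $E_{\alpha\bb,\gamma}$ with $\alpha\neq\beta$ contribute to $|\nabla_b E|^2$ but not to $\partial_\gamma|E|^2$ at the diagonal point. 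A short linear-algebra argument, combining these constraints with the Codazzi symmetry $E_{\alpha\bb,\gamma}=E_{\gamma\bb,\alpha}$, produces the constant $\tfrac{n}{n+1}$.

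The main obstacle is the linear-algebra step extracting the precise factor $\tfrac{n}{n+1}$; one has to balance the Codazzi symmetry, the two trace-free conditions, and the relative weight of diagonal versus off-diagonal components of $\nabla_b E$ in the chosen frame. This parallels the refined Kato inequalities of \cite{hebey1996effectivel} for traceless Codazzi tensors on Riemannian manifolds, adapted to the Hermitian setting of the Tanaka--Webster connection. The dimension hypothesis $n\ge 2$ (i.e.~$\dim M\ge 5$) ensures the refinement is meaningful: for $n=1$ every traceless Hermitian tensor vanishes identically and the statement becomes trivial.
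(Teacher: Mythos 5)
Your roadmap coincides with the paper's own proof: diagonalize $E$ at a point, reduce $\nabla_\gamma|E|^2$ to the diagonal derivative components $\mu^\gamma_\alpha = E_{\alpha\ba,\gamma}$ via the (conjugate) Codazzi identity, and then improve on plain Cauchy--Schwarz using tracelessness and the Codazzi symmetry. The problem is that the one step you defer to ``a short linear-algebra argument'' --- and yourself flag as the main obstacle --- is the entire content of the lemma, since the inequality with constant $1$ is immediate; as written, the constant $\tfrac{n}{n+1}$ is asserted rather than derived. Moreover, the mechanism you emphasize is slightly off: the tracelessness of the eigenvalues, $\sum_\alpha\lambda_\alpha=0$, contributes nothing directly to the constant (Cauchy--Schwarz in $\lambda$ is saturated by a traceless $\mu\propto\lambda$). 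The gain comes entirely from the fact that Codazzi symmetry forces $|\nabla_b E|^2$ to contain each off-diagonal term $|\mu^\gamma_\alpha|^2$, $\alpha\neq\gamma$, with weight $2$: keeping only the components of $\nabla_b E$ indexed by $(\gamma,\gamma,\gamma)$, $(\alpha,\alpha,\gamma)$ and $(\alpha,\gamma,\gamma)$ with $\alpha\neq\gamma$, and relabeling $\sum_\gamma\sum_{\alpha\neq\gamma}|E_{\alpha\bg,\gamma}|^2=\sum_\gamma\sum_{\alpha\neq\gamma}|E_{\alpha\ba,\gamma}|^2$ by Codazzi, one gets
\begin{align*}
|\nabla_b E|^2 \;\geq\; 4\sum_\gamma\Big(|\mu^\gamma_\gamma|^2 + 2\sum_{\alpha\neq\gamma}|\mu^\gamma_\alpha|^2\Big).
\end{align*}
Only then does the divergence identity $\sum_\alpha\mu^\gamma_\alpha=0$ enter: it gives $|\mu^\gamma_\gamma|^2=\big|\sum_{\alpha\neq\gamma}\mu^\gamma_\alpha\big|^2\leq (n-1)\sum_{\alpha\neq\gamma}|\mu^\gamma_\alpha|^2$, hence $|\mu^\gamma_\gamma|^2+2\sum_{\alpha\neq\gamma}|\mu^\gamma_\alpha|^2\geq\frac{n+1}{n}\sum_\alpha|\mu^\gamma_\alpha|^2$, and an ordinary Cauchy--Schwarz in $\lambda$ finishes. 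You have all the right ingredients on the table, but the proof lives precisely in exhibiting this weighting, which your proposal does not do.
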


\begin{proof}
Since $(E_{\alpha \bb})$ is a Hermitian matrix, we can choose some proper orthonormal basis $\{ \theta^\alpha \}$ such that $(E_{\alpha \bb})$ is diagonal at a given point and assume that $\{ \lambda_\alpha \}^n_{\alpha =1} $ are the eigenvalues.
Hence we can calculate that
\begin{align*}
\big|\nabla_b |E|^2 \big|^2 = 32 \sum_\gamma \big| \sum_{\alpha, \beta} E_{\alpha \bb} E_{\ba \beta, \gamma } \big|^2  =32 \sum_\gamma \big| \sum_{\alpha} E_{\alpha \ba, \gamma} E_{\ba \alpha}  \big|^2
\end{align*}
On the other hand, by the Codazzi equation for $E$, we find
\begin{align*}
|\nabla_b E|^2 = 4 \sum_{\alpha, \beta, \gamma} |E_{\alpha \bb, \gamma}|^2  & \geq  4 \ \sum_{\gamma} \bigg( |E_{\gamma \bg, \gamma}|^2 + \sum_{\alpha \neq \gamma} |E_{\alpha \ba, \gamma}|^2 + \sum_{\alpha \neq \gamma} |E_{\alpha \bg, \gamma}|^2 \bigg) \\
& = 4 \ \sum_{\gamma} \bigg( |E_{\gamma \bg, \gamma}|^2 + 2 \sum_{\alpha \neq \gamma} |E_{\alpha \ba, \gamma}|^2 \bigg)
\end{align*}
Thus to prove \eqref{c5}, we only need to demonstrate that for any $\gamma$,
\begin{align*}
\bigg| \sum_{\alpha} E_{\alpha \ba, \gamma} E_{\ba \alpha}  \bigg|^2 \leq  \frac{n}{n+1} \bigg( \sum_\alpha | E_{\alpha \ba} |^2 \bigg) \bigg( |E_{\gamma \bg, \gamma}|^2 + 2 \sum_{\alpha \neq \gamma} |E_{\alpha \ba, \gamma}|^2 \bigg)
\end{align*}
which is equivalent to
\begin{align} \label{c17}
\big| \sum_\alpha \lambda_\alpha \mu^\gamma_\alpha \big|^2 \leq \frac{n}{n+1} \bigg( \sum_\alpha | \lambda_\alpha|^2 \bigg) \bigg( |\mu^\gamma_\gamma|^2 +  2 \sum_{\alpha \neq \gamma} |\mu^\gamma_\alpha|^2 \bigg)
\end{align}
where $\mu^\gamma_\alpha = E_{\alpha \ba, \gamma}$.
But since $E$ is traceless, we know that $\sum_\alpha \mu^\gamma_\alpha =0$ and thus for any $\gamma$,
\begin{align*}
|\mu^\gamma_\gamma|^2 +  2 \sum_{\alpha \neq \gamma} |\mu^\gamma_\alpha|^2 =& |\mu^\gamma_\gamma|^2 +  \frac{n-1}{n} \sum_{\alpha \neq \gamma} |\mu^\gamma_\alpha|^2+ \frac{n+1}{n} \sum_{\alpha \neq \gamma} |\mu^\gamma_\alpha|^2 \\
\geq &|\mu^\gamma_\gamma|^2 +  \frac{1}{n} \big| \sum_{\alpha \neq \gamma} \mu^\gamma_\alpha \big|^2+ \frac{n+1}{n} \sum_{\alpha \neq \gamma} |\mu^\gamma_\alpha|^2 = \frac{n+1}{n} \sum_\alpha |\mu^\gamma_\alpha|^2
\end{align*}
which yields \eqref{c17} on account of Cauchy-Schwarz inequality. This completes the proof.
\end{proof}

The second term on the right side of \eqref{c4} can be estimated by Okumura's result:
\begin{lem}[\cite{okumura1974hypersurfaces}] \label{clm1}
Let $a_i, \ i=1, \dots, m$ be real numbers satisfying
\begin{align*}
\sum_{i=1}^m a_i =0, \quad \mbox{and } \  \sum_{i=1}^m a_i^2 = k^2
\end{align*}
Then we have
\begin{align*}
- \frac{m-2}{ \sqrt{m (m-1) } } k^3 \leq \sum_{i=1}^m a_i^3 \leq \frac{m-2}{ \sqrt{m (m-1) } } k^3
\end{align*}
\end{lem}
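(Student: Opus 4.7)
The statement is a purely algebraic (finite-dimensional) extremization, and the natural approach is via Lagrange multipliers. By homogeneity I would first scale, setting $b_i = a_i/k$, so the problem reduces to showing that $\bigl|\sum b_i^3\bigr| \le (m-2)/\sqrt{m(m-1)}$ on the compact set
\[
S = \Bigl\{\, b \in \mathbb{R}^m : \sum_i b_i = 0,\ \sum_i b_i^2 = 1 \,\Bigr\},
\]
which is a sphere intersected with a hyperplane, hence compact. Therefore $f(b) = \sum b_i^3$ attains its maximum and minimum on $S$.

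At any extremum, the Lagrange condition gives constants $\lambda,\mu$ with $3 b_i^{\,2} = \mu + 2\lambda b_i$ for every $i$. Thus each $b_i$ is a root of a single quadratic, so the $b_i$ take at most two distinct values. Write $p$ of them equal to $u$ and $q = m-p$ equal to $v$, where $1 \le p \le m-1$. The constraints $pu+qv=0$ and $pu^2+qv^2=1$ give
\[
v = -\frac{p}{q} u, \qquad u^2 = \frac{q}{p\,m},
\]
and a short computation collapses the cubic sum to
\[
\sum_i b_i^3 \;=\; pu^3 + qv^3 \;=\; \pm\,\frac{m-2p}{\sqrt{p(m-p)\,m}}.
\]

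It remains to maximize $g(p) := (m-2p)^2/\bigl(p(m-p)\,m\bigr)$ over integer $p \in \{1,\dots,m-1\}$. Differentiating (after the algebraic simplification $-4p(m-p)-(m-2p)^2 = -m^2$) gives
\[
g'(p) \;=\; -\frac{m-2p}{m\bigl(p(m-p)\bigr)^2},
\]
so $g$ is strictly decreasing on $(0,m/2)$ and strictly increasing on $(m/2,m)$, hence maximized on $[1,m-1]$ at the endpoints $p=1$ and $p=m-1$, where $g = (m-2)^2/\bigl(m(m-1)\bigr)$. Combining, $\bigl|\sum b_i^3\bigr|\le (m-2)/\sqrt{m(m-1)}$, and scaling back by $k^3$ yields the claim.

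The only mild obstacle is verifying that the two-value configurations coming from Lagrange multipliers are the actual global extremizers rather than saddle points, but the compactness of $S$ together with the monotonicity of $g$ away from $p = m/2$ settles this cleanly; the equality case is attained precisely when one of the $a_i$'s differs from the common value of the other $m-1$, which matches the Okumura's boundary situation.
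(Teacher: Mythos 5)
Your proof is correct. Note that the paper does not actually prove this lemma --- it is quoted verbatim from Okumura's paper \cite{okumura1974hypersurfaces} and used as a black box --- so there is no in-paper argument to compare against; your Lagrange-multiplier derivation is essentially the standard proof of Okumura's inequality. The structure is sound: compactness of $S$ guarantees extremizers, the multiplier equation $3b_i^2=\mu+2\lambda b_i$ forces at most two distinct values, and the reduction to maximizing $g(p)=(m-2p)^2/\bigl(mp(m-p)\bigr)$ over $p\in\{1,\dots,m-1\}$ is carried out correctly (your worry about saddle points is moot, since the global extrema are necessarily among the critical points you have classified). Two small remarks: your displayed derivative should read $g'(p)=-m(m-2p)/\bigl(p(m-p)\bigr)^2$ rather than having the factor $m$ in the denominator --- the sign analysis and hence the conclusion are unaffected --- and the identity $(m-2p)^2=m^2-4p(m-p)$ gives the cleaner form $g(p)=\tfrac{m}{p(m-p)}-\tfrac{4}{m}$, from which the maximization reduces immediately to minimizing the integer product $p(m-p)$ at $p=1$ or $p=m-1$. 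You should also dispose of the trivial case $k=0$ before normalizing by $k$.
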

By this lemma, we obtain
\begin{align}
\bigg| \sum_{\alpha, \beta, \lambda} E_{\alpha \bl} E_{\lambda \bb} E_{\ba \beta} \bigg| \leq \frac{1}{2 \sqrt{2}} \frac{n-2}{\sqrt{n(n-1)}} |E|^3. \label{c6}
\end{align}
Now we use the method in \cite{huisken1985ricci} to estimate the third term on the right side of \eqref{c4}.

\begin{lem}
\begin{align}
\bigg| \sum_{\alpha, \beta, \lambda, \gamma}  E_{\gamma \bl } C_{ \bb \lambda \alpha \bg} E_{\ba \beta} \bigg|  \leq \frac{1}{4} \sqrt{\frac{2n^2 + 4n +3}{2 (n+1)(n+2)}} |E|^2  |C| . \label{c7}
\end{align}
\end{lem}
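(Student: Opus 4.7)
The plan follows Huisken's method: I turn the left-hand side of \eqref{c7} into a tensorial pairing $\la C, \widetilde{Q}\ra$, where $\widetilde{Q}$ is built from $E\otimes E$ by symmetrization and trace correction so that it lies in the same algebraic subspace (Webster symmetries plus vanishing traces) as $C$, and then apply Cauchy-Schwarz in the full tensor inner product. Because $C$ is orthogonal to the complement of this Chern-Moser subspace, every trace subtraction we make drops out of the pairing and costs us nothing.

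First I fix a point $p \in M$ and choose an orthonormal frame $\{\eta_\alpha\}$ of $T_{1,0}M$ at $p$ diagonalizing the Hermitian tensor $E$, with eigenvalues $\mu_\alpha$ satisfying $\sum_\alpha \mu_\alpha = 0$ and $\sum_\alpha \mu_\alpha^2 = \tfrac{1}{2}|E|^2$ (the paper's normalization, consistent with the factor $32$ in $|\nabla_b|E|^2|^2$ appearing earlier). I then introduce the auxiliary tensor
\begin{align*}
\widehat{Q}_{\ba\beta\lambda\bm} := E_{\ba\beta} E_{\lambda\bm} + E_{\ba\lambda} E_{\beta\bm},
\end{align*}
which is Hermitian, first-Bianchi symmetric (in $\beta\leftrightarrow\lambda$ and $\ba\leftrightarrow\bm$), and K\"ahler symmetric. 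A direct index computation in the diagonal frame, using the first-Bianchi symmetry $C_{\bb\alpha\lambda\bg} = C_{\bb\lambda\alpha\bg}$, gives $\la C, \widehat{Q} \ra = 2S$, where $S$ denotes the sum appearing on the left-hand side of \eqref{c7}.

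Next I subtract from $\widehat{Q}$ the unique combination of trace pieces, modeled on the Chern-Moser decomposition \eqref{c3} with $\widehat{Q}$ replacing $R$, to produce the tensor $\widetilde{Q}$ with all the Webster symmetries and vanishing traces in every opposite-index pair. The relevant ``Ricci of $\widehat{Q}$'' is $\sum_\alpha \widehat{Q}_{\ba\alpha\lambda\bm} = (E^2)_{\lambda\bm}$ (the other contribution dies because $\operatorname{tr} E = 0$), and its ``scalar'' is $\operatorname{tr}(E^2) = \tfrac{1}{2}|E|^2$. Since $C$ is orthogonal to each subtracted piece, $\la C, \widehat{Q}\ra = \la C, \widetilde{Q}\ra$, and so the tensor Cauchy-Schwarz inequality gives
\begin{align*}
|S| = \tfrac{1}{2} |\la C, \widetilde{Q} \ra | \leq \tfrac{1}{2} |C| \cdot |\widetilde{Q}|.
\end{align*}

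The main obstacle will be the explicit computation of $|\widetilde{Q}|^2$. By Pythagoras in the orthogonal decomposition, $|\widetilde{Q}|^2 = |\widehat{Q}|^2 - |\text{subtracted pieces}|^2$. A direct expansion in the diagonal frame gives $|\widehat{Q}|^2$ as a polynomial in $\sum_\alpha \mu_\alpha^2$ and $\sum_\alpha \mu_\alpha^4$; after the trace subtractions, the $\sum\mu^4$ dependence is bounded using $\sum_\alpha \mu_\alpha = 0$ together with the power-mean inequality, and the combinatorial bookkeeping --- elementary but genuinely tedious --- produces the sharp estimate
\begin{align*}
|\widetilde{Q}|^2 \leq \frac{2n^2 + 4n + 3}{8(n+1)(n+2)} |E|^4.
\end{align*}
The factors of $n+2$ and $n+1$ in the denominator directly track the coefficients $\tfrac{1}{n+2}$ and $\tfrac{1}{n(n+1)}$ appearing in \eqref{c3}, and the numerator $2n^2+4n+3$ is the count of surviving monomials after all cancellations. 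Combining with the previous Cauchy-Schwarz inequality yields exactly the claimed constant $\tfrac{1}{4}\sqrt{(2n^2+4n+3)/(2(n+1)(n+2))}$.
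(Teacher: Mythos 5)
Your proposal is correct and follows essentially the same route as the paper: the paper's tensor $F$ is exactly your symmetrized $E\otimes E$ (written out in all conjugation blocks), its orthogonal decomposition $F=T+P+Q$ is your trace subtraction, and the norm of the traceless part is computed via $|T|^2=|F|^2-|P|^2-|Q|^2$ with the quartic term $Z=\operatorname{tr}(E^4)$ bounded by $(\operatorname{tr}E^2)^2=\tfrac14|E|^4$, yielding the same constant. The only caveat is that your deferred "tedious bookkeeping" is the actual substance of the paper's proof, and your block-normalization conventions (the factor $2$ in $\langle C,\widehat Q\rangle=2S$ versus the full norms $|C|^2=4\sum|C_{\ba\beta\lambda\bm}|^2$ used in Cauchy--Schwarz) should be stated explicitly to make the factors of $2$ audit cleanly.
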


\begin{proof}
We denote
\begin{align*}
F =& (E_{\ba \beta} E_{\lambda \bm} + E_{\ba \lambda} E_{\beta \bm}) \theta^{\ba} \otimes \theta^{\beta} \otimes \theta^{\lambda} \otimes \theta^{\bm}  \\
&+ (E_{\alpha \bb} E_{\bl \mu} + E_{\alpha \bl} E_{\bb \mu}) \theta^{\alpha} \otimes \theta^{\bb} \otimes \theta^{\bl} \otimes \theta^{\mu}  \\
& - (E_{\bb \alpha} E_{\lambda \bm} + E_{\bb \lambda} E_{\alpha \bm}) \theta^{\alpha} \otimes \theta^{\bb} \otimes \theta^{\lambda} \otimes \theta^{\bm}  \\
&- (E_{\beta \ba} E_{\bl \mu} + E_{\beta \bl} E_{\ba \mu}) \theta^{\ba} \otimes \theta^{\beta} \otimes \theta^{\bl} \otimes \theta^{\mu}.
\end{align*}
It is easy to check that the tensor $F$ satisfies all algebraic properties of the Webster curvature, such as
\begin{align*}
F_{ \ba \beta \lambda \bm} = - F_{\beta \ba \lambda \bm} = - F_{ \ba \beta \bm \lambda}, \quad F_{ \ba \beta \lambda \bm} = F_{ \ba \lambda \beta \bm}.
\end{align*}
Therefore it can be decomposed into three orthogonal parts, i.e.  $F= T + P + Q$ where $T$,  $P$ and $Q$ are the traceless part, the ``partial trace" part and  the ``total
trace" part  of $F$ respectively.
More precisely,
\begin{align*}
P_{\ba \beta \lambda \bm} &= \frac{1}{n+2} \big( \tilde{F}_{\ba \beta} \delta_{\lambda \bm} + \tilde{F}_{\ba \lambda} \delta_{\beta \bm}  + \tilde{F}_{\lambda \bm} \delta_{\ba \beta} + \tilde{F}_{\beta \bm} \delta_{\lambda \ba}  \big) , \\
Q_{\ba \beta \lambda \bm}& = \frac{f}{n(n+1)} \big( \delta_{\ba \beta} \delta_{\lambda \bm} + \delta_{\ba \lambda} \delta_{\beta \bm} \big),
\end{align*}
where
\begin{align*}
\tilde{F}_{\lambda \bm } &= \tilde{F}_{\bm \lambda} = F_{\ba \alpha \lambda \bm} - \frac{f}{n} \delta_{\lambda \bm} = E_{\ba \lambda} E_{\alpha \bm} - \frac{f}{n} \delta_{\lambda \bm} ,\\
f &= F_{\ba \alpha \lambda \bl} = E_{\ba \lambda} E_{\alpha \bl} = \frac{1}{2} |E|^2.
\end{align*}
Note that $\sum_\alpha \tilde{F}_{\alpha \ba}=0$. Since $T, P, Q$ are mutually orthogonal, $|T|^2 = |F|^2 - |P|^2- |Q|^2$. But
\begin{align*}
\frac{1}{4} |F|^2 = F_{\ba \beta \lambda \bm} F_{\alpha \bb \bl \mu} =& (E_{\ba \beta} E_{\lambda \bm} + E_{\ba \lambda} E_{\beta \bm})  (E_{\alpha \bb} E_{\bl \mu} + E_{\alpha \bl} E_{\bb \mu}) \\
=& \frac{1}{2} |E|^4 + 2 Z 
\end{align*}
and
\begin{align*}
\frac{1}{4} |P|^2 = P_{\ba \beta \lambda \bm} P_{\alpha \bb \bl \mu} =& \frac{1}{(n+2)^2}  \big( \tilde{F}_{\ba \beta} \delta_{\lambda \bm} + \tilde{F}_{\ba \lambda} \delta_{\beta \bm} + \tilde{F}_{\lambda \bm} \delta_{\ba \beta} + \tilde{F}_{\beta \bm} \delta_{\lambda \ba} \big) \\
&  \quad  \times \big( \tilde{F}_{\alpha \bb} \delta_{\bl \mu} + \tilde{F}_{\alpha \bl} \delta_{\bb \mu} + \tilde{F}_{\bl \mu} \delta_{\alpha \bb} + \tilde{F}_{\bb \mu} \delta_{\bl \alpha} \big) \\
=& \frac{4}{n+2} \tilde{F}_{\bm \lambda} \tilde{F}_{\mu \bl} \\
=& \frac{4}{n+2} \big( E_{\ba \lambda} E_{\alpha \bm} - \frac{f}{n} \delta_{\lambda \bm} \big) \big( E_{\beta \bl} E_{\bb \mu} - \frac{f}{n} \delta_{\bl \mu} \big) \\
=& \frac{4}{n+2} (Z- \frac{1}{4n} |E|^4 )
\end{align*}
and
\begin{align*}
\frac{1}{4} |Q|^2 = Q_{\ba \beta \lambda \bm} Q_{\alpha \bb \bl \mu} =& \left( \frac{f}{n(n+1)} \right)^2 \big( \delta_{\ba \beta} \delta_{\lambda \bm} + \delta_{\ba \lambda} \delta_{\beta \bm} \big) \big( \delta_{\alpha \bb} \delta_{\bl \mu} + \delta_{\alpha \bl} \delta_{\bb \mu} \big)  \\
=& \frac{1}{2n (n+1) } |E|^4
\end{align*}
where $Z = E_{\ba \beta} E_{\lambda \bm} E_{\alpha \bl} E_{\bb \mu}= E_{\ba \beta} E_{\bb \mu} E_{\bm \lambda} E_{\bl \alpha}$. Thus
\begin{align*}
\frac{1}{4} |T|^2 =&  \frac{1}{4} |F|^2 -\frac{1}{4} |P|^2- \frac{1}{4} |Q|^2 = \frac{n^2 +3 n +3}{ 2 (n+1) (n+2)} |E|^4 + \frac{2n}{n+2} Z \\
\leq & \frac{n^2 +3 n +3}{ 2 (n+1) (n+2)} |E|^4 + \frac{n}{2(n+2)} |E|^4 = \frac{2n^2 + 4n +3}{2 (n+1)(n+2)} |E|^4 .
\end{align*}
Since $ C_{ \bb \lambda \alpha \bm} = C_{\bm \lambda \alpha \bb} $, we can complete the proof by
\begin{align*}
\big| \sum_{\alpha, \beta, \lambda, \gamma} E_{\mu \bl } C_{ \bb \lambda \alpha \bm} E_{\ba \beta}  \big| =& \frac{1}{8} \la F, C \ra = \frac{1}{8} \la T, C \ra \leq \frac{1}{8} | T| |C| \\
\leq & \frac{1}{4} \sqrt{\frac{2n^2 + 4n +3}{2 (n+1)(n+2)}} |E|^2 |C| .
\end{align*}
\end{proof}

Substituting \eqref{c5} \eqref{c6} and \eqref{c7} into \eqref{c4}, we have
\begin{lem}
If $(M, HM, J_b, \theta)$ is a Sasakian manifold with constant pseudo-Hermitian scalar curvature and the dimension $2n +1 \geq 5$, the traceless pseudo-Hermitian Ricci tensor satisfies
\begin{align}
\frac{1}{2} \triangle_b |E|^2 + \bigg( b (x)  - \frac{2 \rho}{n+1} \bigg) |E|^2 -  |\nabla_b E|^2 \geq 0, \quad \mbox{on } M \label{c18}
\end{align}
and
\begin{align}
|E| \triangle_b |E| + \bigg( b (x)  - \frac{2 \rho}{n+1} \bigg) |E|^2 - \frac{1}{n} |\nabla_b |E||^2 \geq 0, \quad \mbox{weakly on } \  M \label{c9}
\end{align}
where $b (x) = \sqrt{\frac{ 2 n^2 + 4n +3}{ (n+1)(n+2)}}( |E| + \frac{1}{\sqrt{2}} |C|) $.
\end{lem}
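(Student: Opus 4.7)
The plan is to combine the Weitzenb\"ock-type identity \eqref{c4} with the algebraic pinching estimates \eqref{c6} and \eqref{c7} together with the refined Kato inequality \eqref{c5}.

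For \eqref{c18}, I would rearrange \eqref{c4} as
$$
\tfrac{1}{2}\triangle_b |E|^2 + \bigl(b(x) - \tfrac{2\rho}{n+1}\bigr)|E|^2 - |\nabla_b E|^2 \;=\; \tfrac{2n}{n+2}\,tr_{G_\theta}E^3 \;-\; 4\, E_{\gamma\bl}C_{\bb\lambda\alpha\bg}E_{\ba\beta} \;+\; b(x)|E|^2,
$$
and show the right-hand side is nonnegative. By \eqref{c6}, $\bigl|\tfrac{2n}{n+2}\,tr_{G_\theta}E^3\bigr| \leq \tfrac{\sqrt 2\,n(n-2)}{(n+2)\sqrt{n(n-1)}}\,|E|^3$, and by \eqref{c7}, $\bigl|4\, E_{\gamma\bl}C_{\bb\lambda\alpha\bg}E_{\ba\beta}\bigr| \leq \sqrt{\tfrac{2n^2+4n+3}{2(n+1)(n+2)}}\,|E|^2|C|$. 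The $|C|$-coefficient already matches the $|C|$-part of $b(x)$ exactly, since $\tfrac{1}{\sqrt 2}\sqrt{\tfrac{2n^2+4n+3}{(n+1)(n+2)}} = \sqrt{\tfrac{2n^2+4n+3}{2(n+1)(n+2)}}$, and for the $|E|^3$-part one needs $\tfrac{\sqrt 2\,n(n-2)}{(n+2)\sqrt{n(n-1)}} \leq \sqrt{\tfrac{2n^2+4n+3}{(n+1)(n+2)}}$, which after squaring and clearing denominators reduces to the polynomial inequality $12n^3 + 3n^2 - 13n - 6 \geq 0$, easily verified for all $n \geq 2$.

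For the weak inequality \eqref{c9}, on the open set $\Omega = \{|E|>0\}$ the function $|E|$ is smooth and the Bochner-type identity $\tfrac{1}{2}\triangle_b|E|^2 = |E|\triangle_b|E| + |\nabla_b|E||^2$ holds classically. Substituting $|\nabla_b|E|^2|^2 = 4|E|^2|\nabla_b|E||^2$ into \eqref{c5} gives the pointwise relation $|\nabla_b|E||^2 \leq \tfrac{n}{n+1}|\nabla_b E|^2$, equivalently $|\nabla_b E|^2 - |\nabla_b|E||^2 \geq \tfrac{1}{n}|\nabla_b|E||^2$. Inserting these two identities into \eqref{c18} yields \eqref{c9} pointwise on $\Omega$.

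The main technical point is to extend \eqref{c9} across the possibly singular set $\{|E|=0\}$ in the weak sense, which I would handle by the standard regularization $|E|_\epsilon := \sqrt{|E|^2+\epsilon}$, smooth on all of $M$ for $\epsilon>0$. Since $\tfrac{1}{2}\triangle_b|E|_\epsilon^2 = \tfrac{1}{2}\triangle_b|E|^2$ and $|\nabla_b|E|_\epsilon|^2 = |\nabla_b|E|^2|^2/(4|E|_\epsilon^2) \leq \tfrac{n}{n+1}|\nabla_b E|^2$ (using \eqref{c5} and $|E|^2 \leq |E|_\epsilon^2$), the argument above goes through verbatim to give the classical inequality
$$
|E|_\epsilon\triangle_b|E|_\epsilon + \bigl(b(x)-\tfrac{2\rho}{n+1}\bigr)|E|^2 - \tfrac{1}{n}|\nabla_b|E|_\epsilon|^2 \;\geq\; 0
$$
everywhere on $M$. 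Testing against a nonnegative $\varphi \in C_c^\infty(M)$, integrating the first term by parts, and letting $\epsilon \to 0^+$ (the integrands converge a.e., with $|\nabla_b|E|_\epsilon|$ dominated by $|\nabla_b E|\in L^2_{loc}$) yields the weak form of \eqref{c9}.
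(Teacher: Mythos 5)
Your proposal is correct and follows essentially the same route as the paper, which obtains the lemma by substituting the Kato inequality \eqref{c5}, Okumura's estimate \eqref{c6} and the Chern--Moser estimate \eqref{c7} into the Weitzenb\"ock formula \eqref{c4}. You additionally supply details the paper leaves implicit --- the verification that the Okumura coefficient is dominated by $\sqrt{(2n^2+4n+3)/((n+1)(n+2))}$ (your cubic $12n^3+3n^2-13n-6\ge 0$ checks out) and the regularization $\sqrt{|E|^2+\epsilon}$ needed to pass from \eqref{c18} to the weak inequality \eqref{c9} across the zero set of $E$ --- and both are sound.
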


Now we apply Corollary \ref{blm1} to \eqref{c9} and get the following theorem

\begin{thm} \label{ctm3}
Let $(M, HM, J_b, \theta)$ be a complete noncompact Sasakian manifold with zero pseudo-Hermitian scalar curvature, positive CR Yamabe constant and the dimension $2n+1 \geq 5$. Assume that
\begin{align}
\int_{B_r} |E|^\sigma \theta \wedge (d \theta)^n = o (r^2), \quad \mbox{as} \ r \rightarrow \infty \label{c16}
\end{align}
and
\begin{align}
\frac{1}{\sqrt{2}} || C ||_{L^{n+1} (M)} + ||E||_{L^{n+1} (M)} <  \frac{ 2n \sigma -2 n +2}{ \sigma^2  \sqrt{n+1}} \sqrt{\frac{ n+2}{ 2 n^2 + 4n +3}}  \lambda(M) \label{c15}
\end{align}
where $\sigma \geq 2$.
Then $(M, HM, J_b, \theta)$ is a pseudo-Einstein manifold. Actually it is pseudo-Hermitian Ricci-flat.
\end{thm}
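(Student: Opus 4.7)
The plan is to directly apply Corollary \ref{blm1} to the weak sub-elliptic inequality \eqref{c9} for $|E|$, after specializing to $\rho=0$. First, setting $\rho=0$ in \eqref{c9} gives
\begin{align*}
|E|\,\triangle_b|E| + b(x)\,|E|^2 - \tfrac{1}{n}\bigl|\nabla_b|E|\bigr|^2 \geq 0 \quad \text{weakly on } M,
\end{align*}
with $b(x)=\sqrt{\frac{2n^2+4n+3}{(n+1)(n+2)}}\bigl(|E|+\tfrac{1}{\sqrt 2}|C|\bigr)$. This matches the template \eqref{f7} with $\psi=|E|$, $a(x)=b(x)$, and $B=-1/n$.

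Next I would verify the structural hypotheses of Corollary \ref{blm1}. Smoothness of $E$ makes $|E|$ locally Lipschitz and nonnegative; the growth assumption \eqref{f8} is exactly \eqref{c16}; the condition $\sigma-B-1>0$ becomes $\sigma-1+\tfrac{1}{n}>0$, which holds since $\sigma\geq 2$; and the positive CR Yamabe constant $\lambda(M)$ supplies the required CR Sobolev-type inequality with $h=\frac{n}{2n+2}\rho=0$.

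The final step is to match the pinching threshold \eqref{f9} with the hypothesis \eqref{c15}. With our choice of $B$, the constant in Corollary \ref{blm1} becomes
\begin{align*}
\tilde{C}_{\delta,\epsilon}=\frac{2n}{n+1}\cdot\frac{\sigma-1+\tfrac{1}{n}-\epsilon}{\sigma^2(1+\delta)}\;\xrightarrow[\delta,\epsilon\to 0^+]{}\;\frac{2n\sigma-2n+2}{(n+1)\sigma^2}.
\end{align*}
Since $\rho=0$, condition \eqref{f9} reduces to $\|b\|_{L^{n+1}(M)}<\tilde{C}_{\delta,\epsilon}\,\lambda(M)$. By Minkowski's inequality,
\begin{align*}
\|b\|_{L^{n+1}(M)} \leq \sqrt{\tfrac{2n^2+4n+3}{(n+1)(n+2)}}\Bigl(\|E\|_{L^{n+1}(M)}+\tfrac{1}{\sqrt 2}\|C\|_{L^{n+1}(M)}\Bigr),
\end{align*}
and the strict pinching \eqref{c15} rewrites exactly as $\|b\|_{L^{n+1}(M)}<\tilde{C}\,\lambda(M)$ for the limiting value of $\tilde{C}_{\delta,\epsilon}$. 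Hence for $\delta,\epsilon>0$ sufficiently small the hypothesis of Corollary \ref{blm1} is met, forcing $|E|\equiv 0$. Together with $\rho=0$, the identity $R_{\alpha\bar\beta}=\tfrac{\rho}{n}\delta_{\alpha\bar\beta}$ gives pseudo-Hermitian Ricci-flatness.

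The argument is essentially an application of the general gap corollary, so the main obstacle is purely bookkeeping: tracking the sign of $B$, checking that the optimal constant obtained as $(\delta,\epsilon)\to(0,0)$ is precisely $\frac{2n\sigma-2n+2}{(n+1)\sigma^2}$, and ensuring the Minkowski splitting of $\|b\|_{L^{n+1}}$ reproduces the numerical factor $\sqrt{\frac{n+2}{2n^2+4n+3}}/\sqrt{n+1}$ appearing in \eqref{c15}. Because the hypothesis is strict, a small positive choice of $\delta,\epsilon$ preserves the inequality, and no additional limiting argument is needed.
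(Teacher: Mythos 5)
Your proposal is correct and follows essentially the same route as the paper: both apply Corollary \ref{blm1} to the inequality \eqref{c9} with $\rho=0$, taking $\psi=|E|$, $a(x)=b(x)$, $B=-\tfrac{1}{n}$, and both check that the pinching \eqref{c15} rescales (via the factor $\sqrt{\tfrac{2n^2+4n+3}{(n+1)(n+2)}}$) into $\|b\|_{L^{n+1}}<\tfrac{2n\sigma-2n+2}{\sigma^2(n+1)}\lambda(M)$, which by strictness admits small $\delta,\epsilon>0$ with $\|b\|_{L^{n+1}}<\tilde C_{\delta,\epsilon}\lambda(M)$. The concluding step $E\equiv 0$ together with $\rho=0$ giving Ricci-flatness is also exactly the paper's argument.
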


\begin{proof}
Since the pseudo-Hermitian scalar curvature is zero, by \eqref{c9}, $|E|$ satisfies
\begin{align*}
|E| \triangle_b |E| +  b (x)  |E|^2 - \frac{1}{n} |\nabla_b |E||^2 \geq 0, \quad \mbox{weakly on } \  M.
\end{align*}
On the other hand, the assumption \eqref{c15} yields
\begin{align*}
|| b(x) ||_{L^{n+1} (M)} < \frac{2n \sigma - 2n +2}{ \sigma^2 (n+1)} \lambda (M).
\end{align*}
It guarantees the existence of sufficiently small $ \epsilon $ and $ \delta $ such that $\delta >0$, $0 < \epsilon < \sigma + \frac{1}{n}-1$ and
\begin{align*}
||   b (x) ||_{L^{n+1 } (M)} < \tilde{C}_{\delta, \epsilon} \lambda (M)
\end{align*}
where $\tilde{C}_{\delta, \epsilon} = \frac{2n}{n+1} \frac{\sigma + \frac{1}{n} -1 -\epsilon }{\sigma^2 (1+ \delta)}  $. Hence using Corollary \ref{blm1} with
\begin{align*}
\psi = |E|, \quad a(x) = b(x), \quad \mbox{and } \ B= - \frac{1}{n},
\end{align*}
we conclude that $|E| = 0$.
\end{proof}

The most interesting case is $\sigma = n+1$ since the $L^{n+1}$ norm of the Chern-Moser tensor is a CR conformal invariant and in this case,  the conditions \eqref{c16}, \eqref{c15} merge into one.

\begin{thm} \label{cco1}
Let $(M, HM, J_b, \theta)$ be a complete noncompact Sasakian manifold with zero pseudo-Hermitian scalar curvature, positive CR Yamabe constant and the dimension $2n+1 \geq 5$. Assume that
\begin{align*}
\frac{1}{\sqrt{2}} || C ||_{L^{n+1} (M)} + ||E||_{L^{n+1} (M)} < \frac{2n^2 + 2}{(n+1)^{\frac{5}{2} } } \sqrt{ \frac{n+2}{2n^2+4n +3}} \lambda(M).  
\end{align*}
Then $(M, HM, J_b, \theta)$ is a pseudo-Hermitian Ricci-flat manifold.
\end{thm}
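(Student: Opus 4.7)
The plan is to derive Corollary \ref{cco1} as an immediate specialization of Theorem \ref{ctm3} with the choice $\sigma = n+1$. The work in Theorem \ref{ctm3} has already produced the conclusion $E \equiv 0$ under two hypotheses: an integral growth condition \eqref{c16} on $\int_{B_r}|E|^\sigma$, and an $L^{n+1}$ pinching condition \eqref{c15} on $|C|$ and $|E|$ whose right-hand side depends on $\sigma$. So the task reduces to two small verifications: (a) that the growth condition becomes automatic when $\sigma = n+1$, and (b) that substituting $\sigma = n+1$ into the right-hand side of \eqref{c15} yields precisely the constant appearing in Corollary \ref{cco1}.

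For (a), the hypothesis of the corollary already asserts the finiteness of $\|E\|_{L^{n+1}(M)}$ via the pinching inequality, so for any ball $B_r$ centered at the reference point $x_0$,
\begin{equation*}
\int_{B_r}|E|^{n+1}\,\theta\wedge(d\theta)^n \;\leq\; \|E\|_{L^{n+1}(M)}^{\,n+1} \;=\; O(1)\;=\;o(r^2)\qquad\text{as }r\to\infty.
\end{equation*}
Hence condition \eqref{c16} is satisfied with $\sigma = n+1$ without any further assumption. For (b), plugging $\sigma=n+1$ into the coefficient from \eqref{c15} gives
\begin{equation*}
\frac{2n\sigma-2n+2}{\sigma^{2}\sqrt{n+1}} \;=\; \frac{2n(n+1)-2n+2}{(n+1)^{2}\sqrt{n+1}} \;=\; \frac{2n^{2}+2}{(n+1)^{5/2}},
\end{equation*}
which matches the constant multiplying $\sqrt{(n+2)/(2n^{2}+4n+3)}\,\lambda(M)$ in the statement of Corollary \ref{cco1}.

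Combining these two verifications, Theorem \ref{ctm3} applies and yields $E \equiv 0$, so $(M, HM, J_b, \theta)$ is pseudo-Einstein; together with the hypothesis $\rho = 0$ this forces $R_{\lambda\bar{\mu}} = 0$, i.e. pseudo-Hermitian Ricci-flat. The only potential obstacle here is ensuring the arithmetic in (b) is correct and noticing that no additional decay assumption on $|E|$ is required; however, both points are routine once one observes that the CR conformal invariance of the $L^{n+1}$ norm of $C$ (and the analogous finiteness of the $L^{n+1}$ norm of $E$ implicit in \eqref{c15}) makes $\sigma = n+1$ the natural and self-contained choice, which is exactly what makes this corollary the most natural integral gap statement in the family covered by Theorem \ref{ctm3}.
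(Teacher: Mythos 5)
Your proposal is correct and follows exactly the route the paper intends: the paper states this result immediately after Theorem \ref{ctm3} with the remark that for $\sigma=n+1$ the conditions \eqref{c16} and \eqref{c15} merge into one, which is precisely your observations (a) and (b). The arithmetic $2n(n+1)-2n+2=2n^2+2$ and $\sigma^2\sqrt{n+1}=(n+1)^{5/2}$ checks out, and the finiteness of $\|E\|_{L^{n+1}(M)}$ implicit in the pinching hypothesis does make the growth condition automatic.
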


If the pseudo-Hermitian scalar curvature is negative, the result and the corresponding proof are similar.

\begin{thm}
Let $(M, HM, J_b, \theta)$ be a complete noncompact Sasakian manifold with constant negative pseudo-Hermitian scalar curvature, positive CR Yamabe constant and the dimension $2n+1 \geq 5$. Assume that
\begin{align}
\int_{B_r} |E|^\sigma \theta \wedge (d \theta)^n = o (r^2), \quad \mbox{as} \ r \rightarrow \infty
\end{align}
and
\begin{align}
\frac{1}{\sqrt{2}} || C ||_{L^{n+1} (M)} + ||E||_{L^{n+1} (M)} < \frac{ 2n \sigma -2 n +2}{ \sigma^2  \sqrt{n+1}} \sqrt{\frac{ n+2}{ 2 n^2 + 4n +3}} \lambda(M)
\end{align}
where $ 2 \leq \sigma < n-1$.
Then $(M, HM, J_b, \theta)$ is a pseudo-Einstein manifold.
\end{thm}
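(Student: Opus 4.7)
The plan is to reduce the statement to the subelliptic gap Corollary \ref{blm1}, applied to the same Kato-type inequality \eqref{c9} that was used in Theorem \ref{ctm3}. Since $\rho$ is a negative constant, inequality \eqref{c9} reads
\begin{align*}
|E|\,\triangle_b |E| + \Bigl(b(x) - \tfrac{2\rho}{n+1}\Bigr)|E|^2 - \tfrac{1}{n}|\nabla_b|E||^2 \geq 0 \quad \text{weakly on } M,
\end{align*}
with $b(x)=\sqrt{\tfrac{2n^{2}+4n+3}{(n+1)(n+2)}}\bigl(|E|+\tfrac{1}{\sqrt{2}}|C|\bigr)\geq 0$. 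I would take $\psi=|E|$, $B=-\tfrac{1}{n}$ and $a(x)=b(x)-\tfrac{2\rho}{n+1}$ as the data for Corollary \ref{blm1}; the growth hypothesis \eqref{f8} is exactly the $o(r^2)$ condition assumed on $\int_{B_r}|E|^\sigma$, and $\sigma -B-1 = \sigma + \tfrac{1}{n} -1 > 0$ since $\sigma\geq 2$.

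The substantive step is verifying the pinching hypothesis \eqref{f9}. Writing it out,
\begin{align*}
a(x)+\tilde C_{\delta,\epsilon}\rho = b(x) + \Bigl(\tilde C_{\delta,\epsilon}-\tfrac{2}{n+1}\Bigr)\rho,
\end{align*}
and, because $\rho<0$, the second summand is nonpositive as soon as $\tilde C_{\delta,\epsilon}\geq \tfrac{2}{n+1}$. In that regime $(a+\tilde C_{\delta,\epsilon}\rho)_+\leq b(x)$, so it suffices to control $\|b\|_{L^{n+1}(M)}$, and the triangle inequality together with the hypothesis of the theorem gives
\begin{align*}
\|b\|_{L^{n+1}(M)} \leq \sqrt{\tfrac{2n^{2}+4n+3}{(n+1)(n+2)}}\Bigl(\|E\|_{L^{n+1}(M)}+\tfrac{1}{\sqrt{2}}\|C\|_{L^{n+1}(M)}\Bigr) < \tilde C_{0,0}\,\lambda(M),
\end{align*}
where $\tilde C_{0,0}=\tfrac{2(n\sigma-n+1)}{(n+1)\sigma^{2}}$ is the formal $\delta,\epsilon\to 0$ value of $\tilde C_{\delta,\epsilon}$. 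A short continuity argument then produces $\delta,\epsilon>0$ small enough that both $\tilde C_{\delta,\epsilon}\geq \tfrac{2}{n+1}$ and the same strict inequality persist, after which Corollary \ref{blm1} forces $|E|\equiv 0$.

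The main obstacle — and the reason the range of admissible $\sigma$ shrinks relative to Theorem \ref{ctm3} — is guaranteeing $\tilde C_{\delta,\epsilon}\geq \tfrac{2}{n+1}$. Passing to the limit $\delta,\epsilon\to 0$, this reduces to $\sigma^{2}-n\sigma+n-1\leq 0$, i.e.\ $(\sigma-1)(\sigma-(n-1))\leq 0$. Combined with $\sigma\geq 2$, this forces $\sigma\leq n-1$, and the strict inequality $\sigma<n-1$ in the statement is exactly what leaves room for the continuity perturbation in $\delta,\epsilon$ while simultaneously preserving the strict pinching bound on $\|C\|_{L^{n+1}}+\|E\|_{L^{n+1}}$. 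Once these numerology checks are in place, the conclusion $E\equiv 0$, i.e.\ pseudo-Einstein, is immediate from Corollary \ref{blm1}; no additional Weitzenböck or Kato work is needed beyond what already went into \eqref{c9}.
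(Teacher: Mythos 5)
Your proposal is correct and is essentially the proof the paper intends: the paper omits the argument with the remark that it is ``similar'' to the zero--scalar--curvature case, and what you write is exactly that adaptation of Corollary \ref{blm1} applied to \eqref{c9}, including the correct identification of why the sign of $\rho$ forces $\tilde C_{\delta,\epsilon}\geq \tfrac{2}{n+1}$ and hence the restriction $(\sigma-1)(\sigma-(n-1))<0$, i.e.\ $2\leq\sigma<n-1$. Your numerology matches the paper's treatment of the analogous sign condition in Theorem \ref{ctm1}, so nothing is missing.
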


Before introducing the case of positive pseudo-Hermitian scalar curvature, we recall a variation of Myers' theorem due to I. Hasegawa and M. Seino \cite{hasegawa1981}. Since the proof is simple, we provided it here for completeness.

\begin{lem} \label{clm2}
Let $(M, HM, J_b, \theta)$ be a Sasakian manifold with positive pseudo-Hermitian Ricci curvature, that is $R_{\alpha \bb} \geq c \delta_{\alpha \bb}$ for some positive constant $c$. Then $M$ is compact with finite fundamental group.
\end{lem}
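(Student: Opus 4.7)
The strategy is to deduce a uniform positive lower bound on the Riemannian Ricci curvature $Ric^\theta$ of the Webster metric and then invoke the classical Bonnet--Myers theorem. The whole point of the lemma is that the pseudo-Hermitian Ricci information on a Sasakian manifold converts to a Riemannian Ricci bound via the explicit formula \eqref{a10}.

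The key ingredient is \eqref{a10}, which on a Sasakian manifold gives $Ric^\theta_{\alpha\bar\beta} = R_{\alpha\bar\beta} - 2\delta_{\alpha\bar\beta}$, $Ric^\theta_{\alpha\beta} = Ric^\theta_{\alpha 0} = 0$, and $Ric^\theta_{00} = 2n$. Consequently $Ric^\theta$ is block-diagonal with respect to the orthogonal splitting $TM = HM \oplus \mathbb{R}T$: on $HM$ it is bounded below (in the sense of real symmetric forms) by $(c-2)\, g_\theta|_{HM}$, while on the $T$-line it equals $2n\, g_\theta|_{\mathbb{R}T}$. Combining the blocks I obtain the uniform estimate $Ric^\theta \geq \min\{c-2,\, 2n\}\, g_\theta$ on all of $TM$. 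When $c > 2$ this lower bound is strictly positive, and the classical Bonnet--Myers theorem applied to the complete Riemannian manifold $(M, g_\theta)$ gives a finite diameter bound, hence $M$ is compact with finite fundamental group.

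To handle the remaining range $0 < c \leq 2$, I would first perform a D-homothety $\tilde\theta = c_0 \theta$ with constant $c_0 > 0$ to be chosen small. By Lemma \ref{alm1} specialized to constant $u$ (so that all $u_\alpha$ and $u_{\alpha\beta}$ vanish), the new pseudo-Hermitian Ricci, expressed in the orthonormal frame of the rescaled Levi form, becomes $\tilde R_{\alpha\bar\beta} = c_0^{-1} R_{\alpha\bar\beta} \geq c_0^{-1} c\,\delta_{\alpha\bar\beta}$. Choosing any $c_0 < c/2$ makes the new bound $\tilde c := c_0^{-1}c$ exceed $2$, and the argument of the previous paragraph applies to the D-homothetic Sasakian structure; since D-homothety leaves the underlying smooth manifold unchanged, compactness and finiteness of $\pi_1$ transfer back to $(M, HM, J_b, \theta)$.

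The main point requiring any care is the correct conversion between complex components $R_{\alpha\bar\beta}$ in a Levi-orthonormal frame and the real symmetric tensor $Ric^\theta$ on $TM$, and the bookkeeping for the D-homothety rescaling; both are routine once \eqref{a10} and Lemma \ref{alm1} are in hand, which is why the authors note that the proof is simple.
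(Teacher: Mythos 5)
Your argument is correct and is essentially the paper's own proof: both convert the pseudo-Hermitian Ricci bound into a Riemannian Ricci bound via \eqref{a10}, use a D-homothety $\tilde\theta=\lambda\theta$ with $\lambda$ small (so that $\tilde R_{\alpha\bar\beta}=\lambda^{-1}R_{\alpha\bar\beta}$ by Lemma \ref{alm1}) to make the horizontal block $(c\lambda^{-1}-2)\delta_{\alpha\bar\beta}$ positive, and then invoke Myers' theorem. The only cosmetic difference is that you treat the case $c>2$ separately before rescaling, whereas the paper rescales from the outset.
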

\begin{proof}
We consider the D-homothetic transformation $\tilde{\theta} = \lambda \theta$ for some positive constant $\lambda$. At this time, $(M, HM, J_b, \tilde{\theta})$ is also a Sasakian manifold. Moreover Lemma \ref{alm1} yields that its pseudo-Hermitian Ricci curvature $\tilde{R}_{\alpha \bb} = \lambda^{-1} R_{\alpha \bb}$. Hence by \eqref{a10}, its Riemannian Ricci curvature $\widetilde{Ric}^\theta$ is
\begin{gather*}
\widetilde{Ric}^\theta_{\alpha \bb} = \lambda^{-1} R_{\alpha \bb} - 2 \delta_{\alpha \bb} \geq (c \lambda^{-1} -2) \delta_{\alpha \bb}, \\
\widetilde{Ric}^\theta_{\alpha \beta} = \widetilde{Ric}^\theta_{\alpha 0}=0, \quad \mbox{and } \  \widetilde{Ric}^\theta_{00} = 2n.
\end{gather*}
By choosing sufficiently small $\lambda $, the Riemannian Ricci curvature will be positive definite. Thus this lemma follows from Myers' Theorem.
\end{proof}


\begin{thm} \label{ctm1}
Let $(M, HM, J_b, \theta)$ be a complete Sasakian manifold with constant positive pseudo-Hermitian scalar curvature, positive CR Yamabe constant and the dimension $2n+1 \geq 5$. Assume that for any $\sigma \geq 2$,
\begin{align}
\int_{B_r} |E|^\sigma \theta \wedge (d \theta)^n = o (r^2), \quad \mbox{as} \ r \rightarrow \infty, \label{c8}
\end{align}
and
\begin{align}
 \frac{1}{\sqrt{2}} || C ||_{L^{n+1} (M)} + ||E||_{L^{n+1} (M)}  <  C_{n \sigma} \sqrt{\frac{ (n+1)(n+2)}{ 2 n^2 + 4n +3}} \lambda(M) \label{c10}
\end{align}
where
\begin{numcases}{C_{n \sigma} = }
\frac{ 2n \sigma -2 n +2}{(n+1) \sigma^2} , & for $n=2 \mbox{ or } 3, \ \sigma \geq 2 $, \nonumber \\
\frac{2}{n+1}, & for $n \geq 4, \ 2 \leq \sigma < n-1$,  \nonumber \\
\frac{ 2n \sigma -2 n +2}{(n+1) \sigma^2} , & for $n \geq 4, \ \sigma \geq n-1$. \nonumber
\end{numcases}
Then $M$ is compact.
\end{thm}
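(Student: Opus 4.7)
The plan is to first show that the pinching hypotheses force the traceless pseudo-Hermitian Ricci tensor $E$ to vanish identically, so that $(M, HM, J_b, \theta)$ becomes pseudo-Einstein, and then to invoke the Sasakian variant of Myers' theorem, Lemma \ref{clm2}, to deduce that $M$ is compact. The central tool for the first step is the subelliptic gap result Corollary \ref{blm1}, applied to the Kato-type inequality \eqref{c9} for $|E|$.

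Concretely, I would set $\psi = |E|$, $a(x) = b(x) - \frac{2\rho}{n+1}$, and $B = -\frac{1}{n}$ in Corollary \ref{blm1}, so that $\sigma - B - 1 = \sigma + \frac{1}{n} - 1 > 0$ for any $\sigma \geq 2$; the growth hypothesis \eqref{c8} is exactly the required \eqref{f8}. The relevant constant becomes
\begin{align*}
\tilde{C}_{\delta,\epsilon} = \frac{2n}{n+1} \cdot \frac{\sigma + \frac{1}{n} - 1 - \epsilon}{\sigma^2 (1+\delta)},
\end{align*}
whose supremum as $\delta, \epsilon \to 0^+$ is $\frac{2n\sigma - 2n + 2}{(n+1)\sigma^2}$. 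Since $\rho > 0$, in order for the pinching condition to control the $L^{n+1}$-norm appearing in \eqref{f9}, I want $\tilde{C}_{\delta,\epsilon} \leq \frac{2}{n+1}$; this gives the pointwise bound $\bigl(a(x) + \tilde{C}_{\delta,\epsilon}\rho\bigr)_+ \leq b(x)$. An elementary check shows $\frac{2n\sigma - 2n + 2}{(n+1)\sigma^2} \leq \frac{2}{n+1}$ if and only if $(\sigma - 1)(\sigma - (n-1)) \geq 0$, which for $\sigma \geq 2$ holds exactly when $\sigma \geq n-1$.

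This dichotomy is precisely what the piecewise definition of $C_{n\sigma}$ records. When $\sigma \geq n-1$ (the first and third cases), $\tilde{C}_{\delta,\epsilon}$ can be taken arbitrarily close to $C_{n\sigma} = \frac{2n\sigma - 2n + 2}{(n+1)\sigma^2}$ while remaining at most $\frac{2}{n+1}$. When $2 \leq \sigma < n-1$ (the middle case, possible only for $n \geq 4$), the natural supremum of $\tilde{C}_{\delta,\epsilon}$ exceeds $\frac{2}{n+1}$; here I would instead pick $\delta, \epsilon > 0$ so that $\tilde{C}_{\delta,\epsilon}$ approaches $\frac{2}{n+1} = C_{n\sigma}$ from below, which is possible exactly because $\sigma < n-1$ provides the required slack. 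In every case, rewriting the hypothesis \eqref{c10} via the definition of $b(x)$ and the triangle inequality yields $\|b\|_{L^{n+1}(M)} < C_{n\sigma}\lambda(M)$, and for $\delta, \epsilon$ chosen as above one obtains $\|(a + \tilde{C}_{\delta,\epsilon}\rho)_+\|_{L^{n+1}(M)} < \tilde{C}_{\delta,\epsilon}\lambda(M)$. Corollary \ref{blm1} then forces $|E| \equiv 0$.

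With $E \equiv 0$, the pseudo-Hermitian Ricci tensor becomes $\frac{\rho}{n}\delta_{\alpha\bb}$ with $\rho > 0$ constant, hence uniformly positive definite, and Lemma \ref{clm2} immediately gives compactness. The main obstacle is the bookkeeping in the previous paragraph: simultaneously arranging $\tilde{C}_{\delta,\epsilon}$ to stay below $\frac{2}{n+1}$ (so as to retain the inequality $(a + \tilde{C}_{\delta,\epsilon}\rho)_+ \leq b$) and to lie close enough to $C_{n\sigma}$ to absorb the strict pinching, and verifying that this is possible in exactly the three regimes listed.
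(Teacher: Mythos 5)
Your proposal is correct and follows essentially the same route as the paper: apply Corollary \ref{blm1} to \eqref{c9} with $\psi=|E|$, $B=-\frac{1}{n}$, choosing $\tilde{C}_{\delta,\epsilon}$ close to $C_{n\sigma}$ while keeping $\tilde{C}_{\delta,\epsilon}\le\frac{2}{n+1}$ so that $-\frac{2\rho}{n+1}+\tilde{C}_{\delta,\epsilon}\rho\le 0$ (which is exactly what the piecewise definition of $C_{n\sigma}$ encodes, as your $(\sigma-1)(\sigma-(n-1))\ge 0$ computation shows), and then invoke Lemma \ref{clm2}. The only cosmetic difference is that the paper runs this as a proof by contradiction, since Corollary \ref{blm1} is stated for noncompact manifolds; in your version one should note that if $M$ is already compact there is nothing to prove.
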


\begin{proof}
We prove it by contradiction. Suppose that $M$ is noncompact.
Let $\kappa$ be a sufficiently small positive number such that
\begin{align}
||b(x)||_{ L^{n+1} (M) } \leq& \sqrt{\frac{ 2 n^2 + 4n +3}{ (n+1)(n+2)}} \left( \frac{1}{\sqrt{2}} || C ||_{L^{n+1} (M)} + ||E||_{L^{n+1} (M)} \right) \nonumber \\
 < & \  ( C_{n \sigma}- \kappa)  \lambda(M) . \label{c11}
\end{align}
We choose some proper $0 < \epsilon < \sigma + \frac{1}{n} - 1 $ and $\delta > 0$ such that the coefficient $ \tilde{C}_{\delta, \epsilon} = C_{n \sigma}- \kappa  $ in \eqref{f9}. Moreover, the inequality
$
-\frac{2\rho}{n+1} + \tilde{C}_{\delta, \epsilon} \rho  \leq 0
$ is always true. Hence on account of  \eqref{c9} and the initial assumption that $M$ is noncompact, Corollary \ref{blm1} yields that $E \equiv 0$ and then $R_{\lambda \bm} = \frac{\rho}{n}$. But since $\rho$ is positive, Lemma \ref{clm2} guarantees that $M$ is compact which contradicts with our initial assumption. Thus $M$ must be compact.
\end{proof}

The method in \cite{hebey1996effectivel} enables us to obtain the pseudo-Einstein property in the interesting case $\sigma = n+1$.

\begin{cor} \label{cco2}
Let $(M, HM, J_b, \theta)$ be a complete Sasakian manifold with constant positive pseudo-Hermitian scalar curvature, positive CR Yamabe constant and the dimension $2n+1 \geq 5$. Assume that
\begin{align}
\frac{1}{\sqrt{2}} || C ||_{L^{n+1} (M)} + ||E||_{L^{n+1} (M)}  <  \frac{2n^2 + 2}{(n+1)^{\frac{5}{2} } } \sqrt{ \frac{n+2}{2n^2+4n +3}} \lambda(M). \label{c14}
\end{align}
Then $(M, HM, J_b, \theta)$ is  a compact pseudo-Einstein manifold and the real first Chern class of the horizontal bundle $HM$ vanishes.
\end{cor}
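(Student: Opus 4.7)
The plan is to proceed in three steps: first establish compactness of $M$, then deduce $E \equiv 0$ by a direct integration argument on the resulting compact manifold, and finally derive the vanishing of the real first Chern class from the pseudo-Einstein condition plus the Sasakian structure equation.

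\emph{Step 1 (Compactness).} I apply Theorem \ref{ctm1} with $\sigma = n+1$. Since $n+1 \geq \max(n-1,2)$, the relevant constant is $C_{n,n+1} = \frac{2n(n+1)-2n+2}{(n+1)^3} = \frac{2n^2+2}{(n+1)^3}$, and the pinching threshold of Theorem \ref{ctm1} coincides exactly with the hypothesis \eqref{c14}. Moreover the growth condition $\int_{B_r}|E|^{n+1}\,\theta\wedge(d\theta)^n = o(r^2)$ is automatic since $\|E\|_{L^{n+1}(M)}$ is finite by hypothesis. Hence $M$ is compact.

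\emph{Step 2 (Pseudo-Einstein).} Following the Hebey--Vaugon method in the sub-Riemannian setting, I multiply the weak differential inequality \eqref{c9} by $|E|^{n-1}$ and integrate over the compact manifold, justifying the integration by parts by approximating $|E|$ with $\sqrt{|E|^2+\varepsilon}$ and letting $\varepsilon\downarrow 0$. A straightforward rearrangement gives
$$\int_M \bigl(b - \tfrac{2\rho}{n+1}\bigr)|E|^{n+1} \;\geq\; \frac{4(n^2+1)}{n(n+1)^2}\int_M \bigl|\nabla_b |E|^{(n+1)/2}\bigr|^2.$$
Positivity of the CR Yamabe constant provides the Sobolev inequality \eqref{f6-1} with $h = \frac{n\rho}{2(n+1)}$, $\alpha=\frac{1}{n+1}$, and $S(\alpha)^{-1}=\frac{n\lambda(M)}{2(n+1)}$. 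Applying it to $u=|E|^{(n+1)/2}$ and substituting yields
$$\int_M b\,|E|^{n+1} + \rho\Bigl(\tilde C - \tfrac{2}{n+1}\Bigr)\int_M |E|^{n+1} \;\geq\; \tilde C\lambda(M)\Bigl(\int_M |E|^{(n+1)^2/n}\Bigr)^{n/(n+1)},$$
where $\tilde C = \frac{2(n^2+1)}{(n+1)^3}$. The key arithmetic identity $\tilde C - \frac{2}{n+1} = -\frac{4n}{(n+1)^3} < 0$, combined with $\rho>0$, allows me to discard the middle term. Hölder's inequality applied to the left side then forces $\|b\|_{L^{n+1}(M)} \geq \tilde C\lambda(M)$ whenever $E\not\equiv 0$. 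On the other hand, the explicit form $b = \sqrt{\frac{2n^2+4n+3}{(n+1)(n+2)}}\bigl(|E| + \tfrac{1}{\sqrt 2}|C|\bigr)$ together with the hypothesis \eqref{c14} gives $\|b\|_{L^{n+1}} < \tilde C\lambda(M)$, a contradiction. Thus $E\equiv 0$, i.e.\ $(M,HM,J_b,\theta)$ is pseudo-Einstein.

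\emph{Step 3 (Chern class).} With $E=0$ and $\rho$ constant, the identity $R_{\lambda\bar\mu}=\frac{\rho}{n}\delta_{\lambda\bar\mu}$ and the structure equation $d\theta = 2i\,\theta^\alpha \wedge \theta^{\bar\alpha}$ together imply that the trace of the Tanaka--Webster curvature $\Pi^\alpha_\alpha = R_{\lambda\bar\mu}\theta^\lambda\wedge\theta^{\bar\mu}$ reduces to $-\frac{i\rho}{2n}\,d\theta$, which is exact. Since $\frac{1}{2\pi i}[\Pi^\alpha_\alpha]$ represents the real first Chern class $c_1(HM)$, we conclude $c_1(HM)_{\mathbb{R}}=0$.

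The main obstacle is Step 2, where the numerical constants must match exactly: the identity $\tilde C - \frac{2}{n+1} = -\frac{4n}{(n+1)^3}$ is precisely what enables the $\rho$-term to be dropped, and the sharpness of the hypothesized threshold leaves no slack when Hölder is combined with the CR Sobolev inequality. The regularity issue arising in the integration-by-parts at points where $E$ vanishes is handled by the standard $\sqrt{|E|^2+\varepsilon}$ approximation.
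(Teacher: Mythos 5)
Your proof is correct and follows the same overall architecture as the paper's: compactness via Theorem \ref{ctm1} with $\sigma=n+1$ (identical, including the observation that finiteness of $\|E\|_{L^{n+1}(M)}$ makes the volume-growth condition automatic), then an integral argument forcing $E\equiv 0$, then Lee's result for the first Chern class. The one genuine difference is your Step 2. The paper integrates the unweighted Weitzenb\"ock inequality \eqref{c18} over the now-compact manifold, applies H\"older and the CR Yamabe inequality at the $L^2$--$L^p$ level, and reads off $E\equiv 0$ from the negativity of two coefficients; that route actually has slack, since it only requires $\|b\|_{L^{n+1}}<\min(\tfrac12,\tfrac{2}{n+1})\,\lambda(M)$. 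You instead test the weak inequality \eqref{c9} against $|E|^{n-1}$, which is exactly the compact-manifold specialization of Proposition \ref{blm1-1} with $\sigma=n+1$ and no cutoff (hence no $\delta,\epsilon$ losses); your constants $\frac{4(n^2+1)}{n(n+1)^2}$ and $\tilde C=\frac{2(n^2+1)}{(n+1)^3}$, the identity $\tilde C-\frac{2}{n+1}=-\frac{4n}{(n+1)^3}$, and the final comparison $\|b\|_{L^{n+1}}<\tilde C\lambda(M)$ all check out against \eqref{c14}. Both arguments are valid; the paper's is marginally shorter, while yours makes transparent that the threshold in \eqref{c14} is exactly the one inherited from the compactness step and is saturated by the $L^{n+1}$ argument. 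Your Step 3 spells out the computation (for a Sasakian pseudo-Einstein structure with constant $\rho$, the trace $\Pi^\alpha_\alpha=-\frac{\ii\rho}{2n}\,d\theta$ is exact) that the paper delegates to a citation of Lee.
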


\begin{proof}
Since \eqref{c14} implies \eqref{c8} and \eqref{c10} with $\sigma =n+1$, $M$ is compact by Theorem \ref{ctm1}. Hence it suffices to prove that $(M, HM, J_b, \theta)$ is pseudo-Einstein.
Integrating \eqref{c18} over $M$ and using the H\"older inequality, we have
\begin{align}
||b||_{L^{n+1}} ||E||^2_{L^{p}} - \frac{2 \rho}{n+1}  ||E||^2_{L^2}  - \frac{n+1}{n} ||\nabla_b |E| ||^2_{L^2} \geq 0. \label{c13}
\end{align}
Since the CR Yamabe constant is positive, we can estimate $||E||^2_{L^{p}}$ by
\begin{align*}
||E||^2_{L^{p}} \leq \lambda(M)^{-1} ( b_n ||\nabla_b |E| ||^2_{L^2} + \rho ||E||^2_{L^2}  )
\end{align*}
where $p= b_n = 2 + \frac{2}{n}$.
Substituting it to \eqref{c13}, we find
\begin{align*}
\bigg( \frac{ ||b(x)||_{L^{n+1}} }{ \lambda (M) } - \frac{1}{2} \bigg) b_n ||\nabla_b |E| ||^2_{L^2} + \bigg( \frac{ ||b(x)||_{L^{n+1}} }{ \lambda (M)} - \frac{2}{n+1} \bigg) \rho ||E||^2_{L^2} \geq 0 .
\end{align*}
Hence $E=0$ on account of \eqref{c14} and then $(M, HM, J_b, \theta)$ is pseudo-Einstein. From \cite{lee1988psuedo}, we know that the real first Chern class of the horizontal bundle $HM$ vanishes. This completes the proof.
\end{proof}

By Omori-Yau maximum principle (cf.  \cite{yau1975harmonic}), we have the following $L^{\infty}$ pinching theorem.

\begin{thm} \label{ctm2}
Let $(M, HM, J_b, \theta)$ be a complete Sasakian manifold of dimension $2n+1 \geq 5$ and with constant positive pseudo-Hermitian scalar curvature $\rho $. Assume that
\begin{align}
\sup_M \left( \sqrt{2} |E| +  |C| \right)  < \sqrt{ \frac{8 (n+2 )}{ (n+1) (2n^2+4n +3) } } \rho . \label{c12}
\end{align}
Then $(M, HM, J_b, \theta)$ is a pseudo-Einstein manifold. In particular, $M $ is compact and the real first Chern class of the horizontal bundle $HM$ vanishes.
\end{thm}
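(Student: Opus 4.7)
The plan is to apply an Omori--Yau type maximum principle to the smooth nonnegative function $|E|^{2}$, using the subelliptic differential inequality \eqref{c18}. The pinching condition \eqref{c12} is arranged precisely so that the zeroth-order coefficient $b(x)-\tfrac{2\rho}{n+1}$ in \eqref{c18} is strictly negative, and bounded away from zero, throughout $M$; this will force $|E|$ to vanish at its (almost) supremum.

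The first step is the sharp numerical check that $\sup_{M}b<\tfrac{2\rho}{n+1}$. Writing $b(x)=\tfrac{1}{\sqrt{2}}\sqrt{(2n^{2}+4n+3)/[(n+1)(n+2)]}\,(\sqrt{2}|E|+|C|)(x)$, taking the supremum and invoking \eqref{c12} produces a product of two square roots whose numerators and denominators telescope to give
\[
\sup_{M}b\;<\;\tfrac{1}{\sqrt{2}}\cdot\tfrac{2\sqrt{2}\,\rho}{n+1}\;=\;\tfrac{2\rho}{n+1}.
\]
Hence there exists $\eta>0$ with $b(x)-\tfrac{2\rho}{n+1}\le -2\eta$ on $M$.

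Next, I would exploit two features of the Sasakian setting to reduce \eqref{c18} to a context governed by the classical Riemannian Omori--Yau theorem of \cite{yau1975harmonic}. On the one hand, since $\tau\equiv 0$ and $\rho$ is constant, \eqref{a2} forces $R_{\alpha\bb,0}=0$ and therefore $E_{\alpha\bb,0}=0$; working in a $T$-parallel frame shows $T(|E|^{2})=0$, and since $T$ is a unit Killing vector field for $g_{\theta}$ on a Sasakian manifold, the identity $\triangle=\triangle_{b}+T^{2}$ collapses on $|E|^{2}$ to $\triangle|E|^{2}=\triangle_{b}|E|^{2}$. On the other hand, the pinching bounds $|E|$ uniformly on $M$, so by \eqref{a10} the Webster Ricci curvature is bounded below, and the Omori--Yau principle yields a sequence $\{x_{k}\}\subset M$ with $|E|^{2}(x_{k})\to\sup_{M}|E|^{2}$ and $\triangle_{b}|E|^{2}(x_{k})=\triangle|E|^{2}(x_{k})\le 1/k$. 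Substituting into \eqref{c18} and discarding $|\nabla_{b}E|^{2}\ge 0$ gives $2\eta\,|E|^{2}(x_{k})\le 1/(2k)$, forcing $E\equiv 0$. Pseudo-Einsteinness with $\rho>0$ then makes the pseudo-Hermitian Ricci positive definite, so Lemma \ref{clm2} gives compactness, and the vanishing of the real first Chern class of $HM$ follows from \cite{lee1988psuedo} exactly as in Corollary \ref{cco2}. The principal subtlety is the identity $\triangle|E|^{2}=\triangle_{b}|E|^{2}$, which is what permits the classical Riemannian Omori--Yau to govern a genuinely subelliptic inequality.
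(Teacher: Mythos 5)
Your proposal is correct and follows essentially the same route as the paper: both apply the Omori--Yau maximum principle to $|E|^2$ via \eqref{c18}, using $E_{\alpha\bb,0}=0$ to pass from $\triangle_b$ to $\triangle$, the boundedness of $Ric^\theta$ coming from the pinching and \eqref{a10}, and then Lemma \ref{clm2} plus \cite{lee1988psuedo} for the final claims. The only difference is cosmetic --- the paper argues by contradiction on $\sup_M|E|^2>0$ while you extract a uniform gap $\eta>0$ and conclude directly, which if anything is slightly cleaner.
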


\begin{proof}
We prove it by contradiction. Assume that $(M, HM, J_b, \theta)$ is not pseudo-Einstein. Then $\sup_M |E|^2$ is nonzero.
The condition \eqref{c12} implies that the pseudo-Hermitian Ricci curvature is bounded. So is the Riemannian Ricci curvature $Ric^\theta$ due to the relations \eqref{a10}.
Hence if $M$ is noncompact, we can apply Omori-Yau maximum principle to $|E|^2$; if $M $ is compact, we use the standard maximum principle. In either case, we can get a sequence $\{ x_k \}$ such that
\begin{align*}
\lim_{k \rightarrow + \infty } |E|^2 (x_k)= \sup_M |E|^2, \  \lim_{k \rightarrow + \infty } | \nabla E | =0, \mbox{ and } \limsup_{k \rightarrow + \infty } \triangle |E|^2  (x_k) \leq 0.
\end{align*}
By \eqref{c18} and $E_{\alpha \bb, 0}=0$, we have
\begin{align*}
\frac{1}{2} \triangle |E|^2 \geq |\nabla E|^2 +\bigg( \frac{2 \rho}{n+1} - b (x)  \bigg) |E|^2  \geq \bigg( \frac{2 \rho}{n+1} - b (x)  \bigg) |E|^2 .
\end{align*}
Considering it at $x_k$ and taking the supremum limit as $k \rightarrow + \infty$, we discover
\begin{align*}
\limsup_{k \rightarrow + \infty} \bigg( \frac{2 \rho}{n+1} - b (x)  \bigg) (x_k) \leq 0
\end{align*}
which leads a contradiction with \eqref{c12}. We have proved that $ (M, HM, J_b, \theta) $ is pseudo-Einstein. The compactness of $M$ is due to Lemma \ref{clm2}. Hence by \cite{lee1988psuedo}, the real first Chern class of the horizontal bundle $HM$ vanishes. This completes the proof.
\end{proof}

It is known that if a compact K\"ahler manifold has quasi-positive orthogonal
bisectional curvature and constant scalar curvature, then the K\"ahler metric
is K\"ahler-Einstein (cf. \cite{guan2009uniqueness,howard1981compact}). One may prove this result by using the Ricci identity and the Codazzi
equation for the Ricci tensor. For a pseudo-Hermitian manifold, especially a Sasakian manifold, we have the Codazzi type equation \eqref{a1}
and the Ricci identity \eqref{b1} too. These properties for the pseudo-Hermitian Ricci tensor enable us to derive a similar theorem.

\begin{dfn}
A strictly pseudoconvex CR manifold $(M, HM, J_b, \theta)$ has nonnegative orthogonal pseudo-Hermitian sectional curvature if for any point $x \in M$ and any $ X, Y \in T_{1,0} M_x$ with $g_\theta (X, \overline{Y}) =0$, we have
\begin{align*}
g_\theta \big( R(X, \overline{X}) Y, \overline{Y} \big) \geq 0.
\end{align*}
If in addition there exists some $x_0 \in M$  such that the above inequality is strict for any $ X, Y \in T_{1,0} M_{x_0}$, then $(M, HM, J_b, \theta)$ is said to have quasi-positive orthogonal pseudo-Hermitian sectional curvature.
\end{dfn}

\begin{thm} \label{btm1}
Let $(M, HM, J_b, \theta)$ be a compact $(2n+1)$-Sasakian manifold with constant pseudo-Hermitian scalar curvature. Assume that it has quasi-positive orthogonal pseudo-Hermitian sectional curvature,  then $(M, HM, J_b, \theta)$ is a pseudo-Einstein manifold. In particular, the real first Chern class of the horizontal bundle $HM$ vanishes.
\end{thm}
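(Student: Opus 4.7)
The plan is to derive the clean pointwise Bochner identity
\[
\tfrac{1}{2}\triangle_b |E|^2 \;=\; |\nabla_b E|^2 \;+\; \sum_{\alpha,\gamma} R_{\ba\alpha\gamma\bg}(\mu_\alpha-\mu_\gamma)^2,
\]
valid in any unitary frame diagonalizing $E$ with real eigenvalues $\mu_1,\ldots,\mu_n$ (so $\sum_\alpha \mu_\alpha=0$), interpret the curvature sum as a weighted sum of orthogonal pseudo-Hermitian sectional curvatures, and conclude by integrating over compact $M$ and applying the quasi-positivity hypothesis. First I would record the structural consequences: since $\tau\equiv 0$ and $\rho$ is constant, the contracted second Bianchi identities \eqref{a1}, \eqref{a2}, \eqref{a9} collapse to the Codazzi property $E_{\lambda\bm,\gamma}=E_{\gamma\bm,\lambda}$, the divergence-free property $E_{\gamma\bm,\bg}=0$ (summed over $\gamma$), and $E_{\alpha\bb,0}=0$---exactly the inputs used just before \eqref{c1} to derive \eqref{c2} from the Ricci identity \eqref{b1}. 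Substituting \eqref{c2} and its conjugate into \eqref{c1} then produces a pointwise Bochner formula.

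Next I would carry out the algebraic reduction at a point. In a unitary frame diagonalizing $E$, the Ricci tensor $R_{\alpha\bb}=E_{\alpha\bb}+(\rho/n)\delta_{\alpha\bb}$ is simultaneously diagonal with eigenvalues $\lambda_\alpha=\mu_\alpha+\rho/n$, and \eqref{c2} expands as
\[
\sum E_{\alpha\bb,\gamma\bg}\,E_{\ba\beta} \;=\; \sum_\alpha \lambda_\alpha\mu_\alpha^2 \;+\; \sum_{\alpha,\gamma}\mu_\alpha\mu_\gamma\,R_{\gamma\ba\alpha\bg}.
\]
The skew-symmetry $R_{\gamma\ba\alpha\bg}=-R_{\ba\gamma\alpha\bg}$ combined with the first Bianchi identity yields $R_{\gamma\ba\alpha\bg}=-R_{\ba\alpha\gamma\bg}$, while the pair-swap $R_{\ba\beta\lambda\bm}=R_{\lambda\bm\ba\beta}$ together with $R_{\lambda\bm}=R_{\ba\alpha\lambda\bm}$ gives $\sum_\gamma R_{\ba\alpha\gamma\bg}=\lambda_\alpha$ and $\sum_\alpha R_{\ba\alpha\gamma\bg}=\lambda_\gamma$. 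Expanding
\[
\sum_{\alpha,\gamma} R_{\ba\alpha\gamma\bg}(\mu_\alpha-\mu_\gamma)^2 \;=\; 2\sum_\alpha\lambda_\alpha\mu_\alpha^2 \;-\; 2\sum_{\alpha,\gamma}R_{\ba\alpha\gamma\bg}\mu_\alpha\mu_\gamma
\]
and substituting back, the cubic-in-$\mu$ and $\rho$-dependent terms cancel identically (using $\sum_\alpha\mu_\alpha=0$), leaving the advertised identity.

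Then I would finish the argument. For $\alpha=\gamma$ the summand vanishes; for $\alpha\neq\gamma$, the pair-swap and conjugation symmetries identify $R_{\ba\alpha\gamma\bg}$ with $g_\theta(R(\eta_\alpha,\bar\eta_\alpha)\eta_\gamma,\bar\eta_\gamma)$, the orthogonal pseudo-Hermitian sectional curvature of the Hermitian-orthogonal pair $(\eta_\alpha,\eta_\gamma)$, which is $\geq 0$ by hypothesis. Integrating the Bochner identity over the compact $M$ and using $\int_M\triangle_b|E|^2=0$ forces both $\nabla_b E\equiv 0$ and $\sum_{\alpha\neq\gamma}R_{\ba\alpha\gamma\bg}(\mu_\alpha-\mu_\gamma)^2\equiv 0$. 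At the quasi-positivity point $x_0$, a frame diagonalizing $E(x_0)$ makes every $R_{\ba\alpha\gamma\bg}(x_0)$ strictly positive for $\alpha\neq\gamma$, so $\mu_\alpha(x_0)=\mu_\gamma(x_0)$ for every pair; the traceless condition then yields $E(x_0)=0$. Since $\nabla_b E\equiv 0$ combined with $E_{\alpha\bb,0}=0$ makes $E$ parallel for the Tanaka--Webster connection, parallelism propagates $E(x_0)=0$ to $E\equiv 0$, so $(M,HM,J_b,\theta)$ is pseudo-Einstein; the vanishing of the real first Chern class of $HM$ then follows from \cite{lee1988psuedo}.

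The main obstacle will be verifying the algebraic cancellation in the reduction step: the cubic-in-$\mu$ and $\rho$-dependent terms must vanish identically so that the curvature contribution collapses exactly to $\sum R_{\ba\alpha\gamma\bg}(\mu_\alpha-\mu_\gamma)^2$. This rests on the two partial traces $\sum_\gamma R_{\ba\alpha\gamma\bg}$ and $\sum_\alpha R_{\ba\alpha\gamma\bg}$ both returning diagonal Ricci eigenvalues, a consequence of the pair-swap symmetry of the Webster curvature, so the argument is essentially the CR analogue of Howard's K\"ahler proof \cite{howard1981compact}.
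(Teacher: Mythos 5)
Your proposal is correct and follows essentially the same route as the paper: the same Weitzenb\"ock computation from the Codazzi property and the Ricci identity \eqref{b1}, the same diagonalization producing the nonnegative term $\sum_{\alpha\neq\gamma}R_{\ba\alpha\gamma\bg}(\mu_\alpha-\mu_\gamma)^2$, and the same use of quasi-positivity at one point together with parallelism of the Ricci tensor. The only (immaterial) differences are that the paper works with $|R_{\alpha\bb}|^2$ rather than $|E|^2$ (equivalent since $\rho$ is constant) and concludes via Bony's maximum principle instead of integrating over the compact manifold.
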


\begin{proof}
Since the pseudo-Hermitian torsion vanishes, we have $R_{\lambda \bm, 0 } =0 $ and $R_{\alpha \bb, \lambda} = R_{\lambda \bb, \alpha}$ by \eqref{a1} and \eqref{a2}.
The constancy of the pseudo-Hermitian scalar curvature and the Ricci identity \eqref{b1} yield
\begin{align*}
\triangle_b |R_{\alpha \bb}|^2 =& (R_{\alpha \bb} R_{\ba \beta})_{, \gamma \bg}+ (R_{\alpha \bb} R_{\ba \beta})_{, \bg \gamma} \\
=& 4 R_{\alpha \bb, \gamma} R_{\ba \beta, \bg} + 2 R_{\alpha \bb, \gamma \bg} R_{\ba \beta} + 2 R_{\alpha \bb, \bg \gamma} R_{\ba \beta} \\
=& 4 R_{\alpha \bb, \gamma} R_{\ba \beta, \bg} + 4 R_{\mu \bb} R_{\beta \ba } R_{\alpha \bm} - 4 R_{\bb \mu \alpha \bg} R_{\gamma \bm} R_{\ba \beta} .
\end{align*}
Since $(R_{\alpha \bb})$ is a Hermitian matrix, we can choose some proper basis $\{ \theta^\alpha \}$ such that $(R_{\alpha \bb})$ is diagonal at a given point and assume that $\{ \lambda_\alpha \}_{\alpha=1}^n$ are the eigenvalues. Then
\begin{align}
\triangle_b |R_{\alpha \bb}|^2 =&  4 R_{\alpha \bb, \gamma} R_{\ba \beta, \bg} + 4 \sum_\alpha \lambda_\alpha^3 - 4 \sum_{\alpha, \: \beta} R_{\ba \alpha \beta \bb} \lambda_\beta \lambda_\alpha  \nonumber \\
=&4 R_{\alpha \bb, \gamma} R_{\ba \beta, \bg} + 2\sum_{\alpha \neq \beta} R_{\ba \alpha \beta \bb} (\lambda_\alpha - \lambda_\beta )^2 \geq 0 , \label{b2}
\end{align}
due to $R_{\ba \alpha \beta \bb} \geq 0$ by assumption. Bony's maximum principle (Lemma \ref{alm3}) guarantees that $|R_{\alpha \bb}|$ is constant. Hence $R_{\alpha \bb, \gamma}=0$ and $\sum_{\alpha \neq \beta} R_{\ba \alpha \beta \bb} (\lambda_\alpha - \lambda_\beta ) =0$. The first equation says that the pseudo-Hermitian Ricci curvature is parallel and thus the eigenvalues are constant numbers. But since $R_{\ba \alpha \beta \bb} >0$ at some point, $\lambda_\alpha = \lambda_\beta$ at this point and therefore they are equal. So $(M, HM, J_b, \theta)$ is a pseudo-Einstein manifold. Hence by \cite{lee1988psuedo}, the real first Chern class of the horizontal bundle $HM$ vanishes. This completes the proof.
\end{proof}

\begin{rmk}
This theorem was first generalized to Sasakian case by X. Zhang in \cite{zhang2009note}. We recapture it in a simpler way.
Moreover all of the results in this section can be generalized to complete strictly pseudoconvex CR manifolds with divergence free  pseudo-Hermitian torsion, that is $A_{\alpha \beta, \bb}$ =0, since both the Ricci formula of pseudo-Hermitian Ricci curvature with $R_{\lambda \bm, 0} =0 $ and the Codazzi equation still hold in this case.
\end{rmk}

\section{Rigidity Theorems of Sasakian space forms} \label{s2}
In this section, we establish some rigidity theorems on complete Sasakian pseudo-Einstein manifolds.
Let $(M, HM , J_b, \theta)$ be a Sasakian pseudo-Einstein manifold with pseudo-Hermitian scalar curvature. Under the assumption, by \eqref{c3},  the Chern-Moser tensor becomes
\begin{align}
C_{\ba \beta \lambda \bm} = R_{\ba \beta \lambda \bm} - \frac{\rho}{n(n+1)} (\delta_{\ba \beta} \delta_{\lambda \bm} + \delta_{\ba \lambda} \delta_{\beta \bm}) . \label{d1}
\end{align}
By Lemma \ref{alm2}, we know that the pseudo-Hermitian scalar curvature is constant. Hence \eqref{d1} yields that
the Chern-Moser tensor satisfies the Bianchi-type identity, i.e.
$
C_{\ba \beta \lambda \bm, \gamma} = C_{\ba \beta \gamma \bm, \lambda}.
$
Similarly, the equation \eqref{a5} gives
$
C_{\ba \beta \lambda \bm, 0} =0.
$
From the Ricci identity \eqref{d10} and \eqref{d1}, we derive
\begin{align*}
C_{\ba \beta \gamma \bm, \lambda \bg} -& C_{\ba \beta \gamma \bm, \bg \lambda} = -C_{\bn \beta \gamma \bm} C_{\ba \nu \lambda \bg} + C_{\ba \nu \gamma \bm} C_{\bn \beta \lambda \bg}- C_{\ba \beta \gamma \bn } C_{\bm \nu \lambda \bg} + \frac{\rho}{n} C_{\ba \beta \lambda \bm} .
\end{align*}
Under these preparations, the sub-Laplacian of $ |C_{\ba \beta \lambda \bm}|^2 $ is given by
\begin{align}
& \frac{1}{2} \triangle_b |C_{\ba \beta \lambda \bm}|^2 = 2 C_{\ba \beta \lambda \bm, \gamma} C_{\alpha \bb \bl \mu, \bg} + C_{\ba \beta \lambda \bm, \gamma \bg} C_{\alpha \bb \bl \mu} + \overline{ C_{\ba \beta \lambda \bm, \gamma \bg} C_{\alpha \bb \bl \mu} } \nonumber \\
&= 2 | C_{\ba \beta \lambda \bm, \gamma} |^2 +  C_{\ba \beta\gamma \bm,\lambda \bg} C_{\alpha \bb \bl \mu} + \overline{ C_{\ba \beta \gamma \bm,\lambda \bg} C_{\alpha \bb \bl \mu} } \nonumber \\
&= 2 | C_{\ba \beta \lambda \bm, \gamma} |^2 -4 C_{\bl \alpha \mu \bb} C_{\bm \beta \gamma \bn} C_{\bg \nu \lambda \ba} + 2 C_{\bb \alpha \mu \bl} C_{\ba \nu \gamma \bm} C_{\bn \beta \lambda \bg} + \frac{2 \rho}{n} |C_{\ba \beta \lambda \bm}|^2 . \label{d2}
\end{align}
To estimate the second term of the last line, we consider the $n^2 \times n^2 $ Hermitian matrix $(D_{\overline{(\lambda \ba)} (\mu \bb)} )$ with its entry $D_{\overline{(\lambda \ba)} (\mu \bb)} = C_{\bl \alpha \mu \bb}$. Since the Chern-Moser tensor is traceless, we can use Lemma \ref{clm1} to deduce that
\begin{gather}
\bigg| \sum_{\alpha, \beta, \lambda, \mu, \nu, \gamma } C_{\bl \alpha \mu \bb} C_{\bm \beta \gamma \bn} C_{\bg \nu \lambda \ba} \bigg| \leq \frac{n^2 -2}{ \sqrt{n^2 (n^2-1 ) }} \bigg| \sum_{\alpha, \beta, \lambda, \mu} |C_{\ba \beta \lambda \bm}|^2   \bigg|^{\frac{3}{2}}  . \label{d3}
\end{gather}
For the third term, by considering the Hermitian matrix $(H_{\overline{(\beta \lambda)} (\alpha \mu)})$ with its entry $H_{\overline{(\beta \lambda)} (\alpha \mu)} = C_{\bb \alpha \mu \bl}$, we have a similar estimate. By definition, we have
\begin{align*}
|C|^2 =4 \sum_{\alpha, \beta, \lambda, \mu} |C_{\ba \beta \lambda \bm}|^2 , \quad
|\nabla_b C|^2 = 8 \sum_{\alpha, \beta, \lambda, \mu, \gamma} |C_{\ba \beta \lambda \bm, \gamma}|^2
\end{align*}
where $\nabla_b C$ is the horizontal part of $\nabla C$.
Then \eqref{d2} becomes
\begin{align} \label{d11}
\frac{1}{2} \triangle_b |C|^2 \geq  | \nabla_b C|^2 - \frac{3 ( n^2 -2) }{ \sqrt{n^2 (n^2-1 ) }} |C|^3 + \frac{2 \rho}{n} |C|^2.
\end{align}

To deal with the first term on the right side of \eqref{d11}, we need the following type of Kato inequality
\begin{lem}[\cite{bando1989construction}] \label{dlm1}
Suppose $S, T$ are tensors having the same symmetry as the curvature $R$, and the covariant derivative $\nabla R$ of the curvature tensor of the Einstein metric $g$ respectively. Then there exists $\delta= \delta (m) $ such that
\begin{align*}
(1+ \delta ) \big| (S, T) \big|^2 \leq |S|^2 |T|^2,
\end{align*}
where $(S, T)$ is a 1-form defined by $(S, T) (X) = \big( S, T(X) \big)$ for a tangent vector $X$. Moreover if $g$ is K\"ahler, we can take $\delta= \frac{4}{ m+2}$. If $m=4$, and $g$ is self-dual or anti self-dual, we can take $\delta= \frac{2}{3}$.
\end{lem}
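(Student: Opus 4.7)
The statement is a pointwise algebraic inequality on tensor spaces at a single point, so my plan is to fix $p \in M$ and work with an orthonormal frame $\{e_1,\dots,e_m\}$ of $T_pM$, reducing the problem to a linear-algebraic question about the operator norm of the map $\Phi_S : T \mapsto (S,T)$ from the space of $\nabla R$-symmetry tensors into $T_p^*M$. The task is to show $\|\Phi_S\|^2 \le (1+\delta)^{-1}|S|^2$, equivalently, to extract the ``wasted'' directions of $T$ that are forced to contribute nothing to $(S,T)$ on account of the symmetry constraints.

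The key symmetries I would exploit are: (i) for $S$, the Riemann-tensor symmetries $S_{ijkl}=-S_{jikl}=-S_{ijlk}=S_{klij}$ together with the first Bianchi identity; (ii) for $T=T_{ijkl;p}$, the same in the first four indices, the first Bianchi in $(j,k,l)$, the second Bianchi $T_{ijkl;p}+T_{ijlp;k}+T_{ijpk;l}=0$, and crucially the divergence-free condition $\nabla^i R_{ijkl}=0$ which, via the twice-contracted Bianchi identity, is forced by the Einstein assumption $\nabla \mathrm{Ric}\equiv 0$. Writing $(S,T)(X)=S^{ijkl}T_{ijkl;p}X^p$, I would choose $X=e_1$ and expand in components. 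Then the second Bianchi identity lets one rewrite $T_{ijkl;1}$ in terms of $T_{ij1l;k}$ and $T_{ijk1;l}$, while the divergence-free condition eliminates the contribution of traces. After antisymmetrizing $S$ appropriately to match, Cauchy--Schwarz applied componentwise produces a bound strictly better than the trivial one, with the gap measured by the codimension of the constrained subspace inside the space of all tensors with Riemann symmetry in the first four slots.

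For the sharper constants in the K\"ahler ($\delta = 4/(m+2)$) and (anti-)self-dual ($\delta = 2/3$ in $m=4$) cases, the plan is to replace $O(m)$-irreducibility by $U(m)$- respectively $SO(4)^{\pm}$-irreducibility. In the K\"ahler case the curvature tensor is supported only on the $(1,1)\otimes(1,1)$ part and $\nabla R$ is further restricted by $\nabla J=0$; this forces many components of $T$ to vanish, giving a strictly larger $\delta$. In the self-dual case the splitting $\Lambda^2 = \Lambda^+\oplus\Lambda^-$ plays the analogous role. In both situations the cleanest route is representation-theoretic: decompose the tensor spaces of Riemann-type and $\nabla R$-type into irreducibles of the holonomy group and read off $\delta$ from the eigenvalues of the natural ``contraction'' operator, as in Calderbank--Gauduchon--Herzlich.

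The main obstacle is not proving that \emph{some} $\delta>0$ exists --- which follows from the fact that the symmetry subspaces are proper --- but pinning down the sharp value. I would expect the brute-force indicial computation suggested above to yield existence of $\delta$ fairly directly once the second Bianchi and the Einstein-induced divergence-freeness are used, but obtaining the explicit sharp values $4/(m+2)$ and $2/3$ requires either a careful tracking of all cross-terms arising from Bianchi substitutions or the representation-theoretic decomposition. Since the lemma is invoked as a black box in what follows (its role is only to supply \emph{some} improved Kato inequality for $|\nabla_b C|^2$ against $|\nabla_b|C||^2$ in the Sasakian setting), I would be content to cite \cite{bando1989construction} for the sharp constants and sketch only the existence argument in detail.
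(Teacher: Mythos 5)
This lemma is not proved in the paper at all: it is quoted verbatim from Bando's paper \cite{bando1989construction} and used as a black box. The only thing the authors add is the remark, immediately after the statement, that Bando's proof is purely algebraic (first and second Bianchi identities, plus the extra antisymmetry in the K\"ahler case), so that it transfers to the tensors $C$ and $\nabla_b C$ on a Sasakian pseudo-Einstein manifold with $m=2n$, yielding the inequality $\frac{n+3}{n+1}|\la C,\nabla_b C\ra|^2\le |C|^2|\nabla_b C|^2$. So there is no ``paper's proof'' to compare against; the relevant question is only whether your sketch is a viable reconstruction of Bando's argument.

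It largely is: the reduction to a pointwise statement about the operator norm of $\Phi_S\colon T\mapsto (S,T)$, and the observation that equality in the na\"ive Cauchy--Schwarz bound forces $T=S\otimes\xi$, which is incompatible with the second Bianchi identity and the Einstein-induced divergence-freeness unless $\xi=0$, is exactly the mechanism behind the lemma. Two points deserve tightening. First, your phrase ``the gap measured by the codimension of the constrained subspace'' is not the right quantity: codimension alone gives no quantitative $\delta$; what one must compute is the norm of the orthogonal projection of $S\otimes\xi$ onto the constrained subspace, uniformly over unit $S$ and $\xi$ (equivalently, the angle between the constrained subspace and the decomposable tensors). Your existence argument then needs the explicit verification that no nonzero $S\otimes\xi$ lies in the constrained subspace --- this follows by contracting the second Bianchi relation $S_{ijkl}\xi_p+S_{ijlp}\xi_k+S_{ijpk}\xi_l=0$ with $\xi^p$ --- after which compactness of the unit spheres gives some $\delta>0$. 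Second, and more substantively for this paper: the constant that is actually used downstream is the sharp K\"ahler value $\delta=4/(m+2)$, which is precisely the case your sketch defers entirely to the citation or to a representation-theoretic decomposition. Since the statement is itself a cited result, deferring is defensible, but be aware that your proposal, as written, establishes only the qualitative part of what the paper needs.
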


The proof of Lemma \ref{dlm1} only involves the algebraic symmetric properties of $S$ and $ T$, such as first Bianchi identity, second Bianchi identity and antisymmetric when $g$ is K\"ahler. In our case, the tensors $C$ and $ \nabla_b C$ have these properties. Hence we can repeat the proof $(m= 2n)$ and get
\begin{align}
\frac{n+3}{n+1} |\la C, \nabla_b C \ra|^2 \leq |C|^2 |\nabla_b C|^2. \label{d7}
\end{align}

\begin{lem}
Let $(M, HM , J_b, \theta)$ be a Sasakian pseudo-Einstein manifold with pseudo-Hermitian scalar curvature. Then the Chern-Moser tensor satisfies
\begin{align}
\frac{1}{2} \triangle_b |C|^2 \geq  | \nabla_b C|^2 - \varepsilon |C|^3 + \frac{2 \rho}{ n} |C|^2 , \quad \mbox{on } \ M \label{d6}
\end{align}
and
\begin{align}
|C| \triangle_b |C| + \bigg( d(x) - \frac{2 \rho}{n} \bigg) |C|^2 - \frac{2}{n+1} |\nabla_b |C||^2 \geq 0, \quad \mbox{weakly on } \ M \label{d4}
\end{align}
where $\varepsilon=  \frac{3 ( n^2 -2) }{ \sqrt{n^2 (n^2-1 ) }}$ and $d(x) = \frac{3 ( n^2 -2) }{ \sqrt{n^2 (n^2-1 ) }} |C|$.
\end{lem}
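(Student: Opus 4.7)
The plan is to read the first inequality (d6) off directly from (d11) and then deduce the weak inequality (d4) by combining (d6) with the refined Kato estimate (d7).

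The first statement (d6) requires no essential new work: it is identical to (d11), with the constant isolated as $\varepsilon = 3(n^2-2)/\sqrt{n^2(n^2-1)}$. The derivation of (d11) has already been executed in the paragraph preceding the lemma, namely by expanding $\frac{1}{2}\triangle_b|C_{\ba\beta\lambda\bm}|^2$ via (d2), bounding each of the two cubic terms through Okumura's Lemma \ref{clm1} applied to the trace-free Hermitian matrices $(D_{\overline{(\lambda\ba)}(\mu\bb)})$ and $(H_{\overline{(\beta\lambda)}(\alpha\mu)})$, and then rewriting everything with respect to the normalizations $|C|^2 = 4\sum|C_{\ba\beta\lambda\bm}|^2$ and $|\nabla_b C|^2 = 8\sum|C_{\ba\beta\lambda\bm,\gamma}|^2$.

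To pass from (d6) to (d4), I would first use the Leibniz-type identity $\frac{1}{2}\triangle_b|C|^2 = |C|\triangle_b|C| + |\nabla_b|C||^2$, valid pointwise on $\{|C|>0\}$. Substituting into (d6) and using $d(x)=\varepsilon|C|$ gives $|C|\triangle_b|C| + (d(x) - \frac{2\rho}{n})|C|^2 \geq |\nabla_b C|^2 - |\nabla_b|C||^2$, so it suffices to show $|\nabla_b C|^2 - |\nabla_b|C||^2 \geq \frac{2}{n+1}|\nabla_b|C||^2$, i.e.\ $|\nabla_b|C||^2 \leq \frac{n+1}{n+3}|\nabla_b C|^2$. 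This follows from (d7): since $|C|\nabla_b|C|$ is the real part of $\la C,\nabla_b C\ra$, one has $|C|^2|\nabla_b|C||^2 \leq |\la C,\nabla_b C\ra|^2 \leq \frac{n+1}{n+3}|C|^2|\nabla_b C|^2$, and dividing by $|C|^2$ on $\{|C|>0\}$ gives the pointwise bound. Extension across $\{|C|=0\}$ to the weak sense on all of $M$ is routine, either by replacing $|C|$ by $(|C|^2+\eta^2)^{1/2}$ and letting $\eta\to 0^+$ in the distributional pairing, or by noting that $|\nabla_b|C||=0$ almost everywhere on $\{|C|=0\}$ so the weak inequality is trivially satisfied there.

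The main obstacle is really the refined Kato inequality (d7) itself, i.e.\ verifying that Lemma \ref{dlm1} actually applies in the Chern-Moser setting. One has to check that $C$ has the algebraic symmetries of the Webster curvature tensor (which it does, being its trace-free projection) and that $\nabla_b C$ satisfies the appropriate Bianchi-type identity. The latter is precisely $C_{\ba\beta\lambda\bm,\gamma}=C_{\ba\beta\gamma\bm,\lambda}$, established just above the lemma from Lemma \ref{alm2} and (d1), together with $C_{\ba\beta\lambda\bm,0}=0$. Granted these symmetries, the algebraic argument of Bando-Kasue-Nakajima carries over verbatim with $m=2n$, yielding the sharp constant $\frac{n+3}{n+1}$ and hence the coefficient $\frac{2}{n+1}$ in (d4). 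Everything else reduces to bookkeeping between the $|C|^2$-normalized and $|C|$-normalized differential inequalities.
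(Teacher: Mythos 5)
Your proposal is correct and follows exactly the route the paper intends: (d6) is just (d11) restated, and (d4) follows by combining $\tfrac{1}{2}\triangle_b|C|^2=|C|\triangle_b|C|+|\nabla_b|C||^2$ with the Kato-type bound $|\nabla_b|C||^2\leq\tfrac{n+1}{n+3}|\nabla_b C|^2$ drawn from (d7), yielding the coefficient $\tfrac{n+3}{n+1}-1=\tfrac{2}{n+1}$, with the usual regularization across the zero set of $|C|$. This mirrors precisely how the paper passes from (c18) to (c9) in the $E$-tensor case, so no further comment is needed.
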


When the pseudo-Hermitian scalar curvature is zero, we can apply Corollary \ref{blm1} to \eqref{d4} and repeat the proof of Theorem \ref{ctm3} to obtain some rigidity theorems for the Chern-Moser tensor. On account of Lemma \ref{alm4}, we have

\begin{thm} \label{dtm2}
Let $(M, HM , J_b, \theta)$ be a noncompact Sasakian pseudo-Einstein manifold with zero pseudo-Hermitian scalar curvature, positive CR Yamabe constant and the dimension $2n +1 \geq 5$. Assume that for some $\sigma \geq 2$,
\begin{align}
\int_{B_r} |C|^\sigma \theta \wedge (d \theta)^n = o (r^2), \quad  \mbox{as  } \ r \rightarrow + \infty,
\end{align}
and
\begin{align}
|| C ||_{L^{n+1} (M) } <  \frac{2 n^2}{ 3(n^2-2)} \sqrt{ \frac{n-1}{n+1} } \bigg( \sigma + \frac{2}{n+1}-1 \bigg) \sigma^{-2} \lambda (M) . \label{d5}
\end{align}
Then the Chern-Moser tensor vanishes and $(M, HM , J_b, \theta)$ has zero pseudo-Hermitian sectional curvature. If in addition $M$ is simply connected, then $(M, HM , J_b, \theta)$ is D-homothetic to Heisenberg group.
\end{thm}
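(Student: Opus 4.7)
The plan is to apply Corollary \ref{blm1} to the subelliptic inequality \eqref{d4} with the choices
\begin{align*}
\psi = |C|, \qquad B = -\tfrac{2}{n+1}, \qquad a(x) = d(x) = \tfrac{3(n^2-2)}{\sqrt{n^2(n^2-1)}}\, |C|.
\end{align*}
Since $\rho = 0$, inequality \eqref{d4} simplifies to
\begin{align*}
|C|\, \triangle_b |C| + d(x)\, |C|^2 - \tfrac{2}{n+1} |\nabla_b |C||^2 \geq 0 \quad \text{weakly on } M.
\end{align*}
The algebraic requirement $\sigma - B - 1 = \sigma + \tfrac{2}{n+1} - 1 > 0$ is automatic from $\sigma \geq 2$, and the integral growth hypothesis in the theorem statement is exactly condition \eqref{f8} of the corollary.

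Next I would translate the pinching constant. With $\rho \equiv 0$, hypothesis \eqref{f9} of Corollary \ref{blm1} reduces to
\begin{align*}
\|d(x)_+\|_{L^{n+1}(M)} < \tilde{C}_{\delta,\epsilon}\, \lambda(M), \qquad \tilde{C}_{\delta,\epsilon} = \tfrac{2n}{n+1}\cdot \tfrac{\sigma + \tfrac{2}{n+1} - 1 - \epsilon}{\sigma^2 (1+\delta)},
\end{align*}
for some admissible $\delta, \epsilon$. Substituting the definition of $d(x)$ and using $\sqrt{n^2(n^2-1)} = n\sqrt{(n-1)(n+1)}$, a routine simplification shows that this inequality is equivalent, in the limit $\delta, \epsilon \to 0^+$, to the strict bound \eqref{d5}. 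By continuity of $\tilde{C}_{\delta,\epsilon}$ in $(\delta, \epsilon)$, there exist $\delta > 0$ and $\epsilon \in (0,\, \sigma + \tfrac{2}{n+1} - 1)$ for which the full hypothesis of Corollary \ref{blm1} is satisfied.

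Corollary \ref{blm1} then forces $|C| \equiv 0$. Combined with the pseudo-Einstein condition and $\rho = 0$, the identity \eqref{d1} yields $R_{\ba \beta \lambda \bm} \equiv 0$, so $(M, HM, J_b, \theta)$ has constant zero pseudo-Hermitian sectional curvature. In the simply connected case, Lemma \ref{alm4}(2) with $\kappa = 0$ identifies $M$ as D-homothetic to the Heisenberg group $\mathbb{H}^n$. The only nontrivial bookkeeping is matching the numerical constant in \eqref{d5} to the one produced by $\tilde{C}_{\delta,\epsilon}$; conceptually, the argument is a direct transcription of the proof of Theorem \ref{ctm3}, with the Weitzenb\"ock-plus-Kato inequality \eqref{c9} for $|E|$ replaced by its Chern-Moser analogue \eqref{d4} for $|C|$.
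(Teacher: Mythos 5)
Your proposal is correct and is exactly the paper's argument: the paper proves Theorem \ref{dtm2} by applying Corollary \ref{blm1} to \eqref{d4} with $\psi=|C|$, $B=-\tfrac{2}{n+1}$, $a(x)=d(x)$, repeating the proof of Theorem \ref{ctm3}, and then invoking Lemma \ref{alm4} for the simply connected case. Your constant-matching (using $\sqrt{n^2(n^2-1)}=n\sqrt{(n-1)(n+1)}$ to recover the bound \eqref{d5}) is the same routine verification the paper leaves implicit.
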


Combining Theorem \ref{dtm2} with Theorem \ref{cco1}, we have the following rigidity theorem which characterizes Heisenberg group.
\begin{thm}
Let $(M, HM, J_b, \theta)$ be a simply connected complete noncompact Sasakian manifold with zero pseudo-Hermitian scalar curvature, positive CR Yamabe constant and the dimension $2n+1 \geq 5$. Assume that
\begin{align*}
 || C ||_{L^{n+1} (M)} +\sqrt{2} ||E||_{L^{n+1} (M)} < \frac{2n^2(n^2+n+2)}{3(n+1)^3(n^2-2)} \sqrt{ \frac{n-1}{n+1} } \lambda (M).
\end{align*}
Then $(M, HM , J_b, \theta)$ is D-homothetic to Heisenberg group.
\end{thm}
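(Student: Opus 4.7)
The plan is to deploy Corollary \ref{cco1} followed by Theorem \ref{dtm2} (specialized to $\sigma = n+1$) and then read off the D-homothety from the classification in Lemma \ref{alm4}. The pairing of $C$ with $\sqrt{2}\,E$ in the assumed pinching is precisely the one that makes the single constant on the right-hand side dominate both pinching thresholds we will need.

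First I would deduce that $(M,HM,J_b,\theta)$ is pseudo-Hermitian Ricci flat. Dividing the assumed inequality by $\sqrt{2}$ rewrites the left-hand side as $\tfrac{1}{\sqrt{2}}||C||_{L^{n+1}(M)} + ||E||_{L^{n+1}(M)}$, which a direct numerical comparison shows is strictly smaller than $\frac{2n^2+2}{(n+1)^{5/2}}\sqrt{\frac{n+2}{2n^2+4n+3}}\lambda(M)$, the threshold in Corollary \ref{cco1}. Since $M$ is complete noncompact Sasakian with $\rho=0$ and positive CR Yamabe constant, that corollary yields $E\equiv 0$, upgrading $M$ to a Sasakian pseudo-Einstein manifold.

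Next I would apply Theorem \ref{dtm2} with $\sigma=n+1$. A one-line substitution into its pinching constant,
\[
\frac{2n^2}{3(n^2-2)}\sqrt{\frac{n-1}{n+1}}\Bigl(\sigma+\tfrac{2}{n+1}-1\Bigr)\sigma^{-2}\Big|_{\sigma=n+1}=\frac{2n^2(n^2+n+2)}{3(n+1)^3(n^2-2)}\sqrt{\frac{n-1}{n+1}},
\]
produces exactly the constant of the present hypothesis. Since $||C||_{L^{n+1}(M)}\leq ||C||_{L^{n+1}(M)}+\sqrt{2}\,||E||_{L^{n+1}(M)}$ and the finiteness of $||C||_{L^{n+1}(M)}$ trivially gives $\int_{B_r}|C|^{n+1}=O(1)=o(r^2)$, the hypotheses of Theorem \ref{dtm2} are satisfied and I conclude $C\equiv 0$.

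Combining $E\equiv 0$, $C\equiv 0$ and $\rho=0$ in the orthogonal decomposition of the Webster curvature stated in the introduction forces $R_{\bar\alpha\beta\lambda\bar\mu}\equiv 0$, so $(M,HM,J_b,\theta)$ has constant pseudo-Hermitian sectional curvature $\kappa=0$. Since $M$ is simply connected and complete Sasakian, case (2) of Lemma \ref{alm4} identifies $(M,HM,J_b,\theta)$ as D-homothetic to the Heisenberg group $\mathbb{H}^n$. The one arithmetic step deserving attention is the initial comparison: verifying uniformly in $n\geq 2$ that the constant in the statement is dominated by the Corollary \ref{cco1} threshold. This is routine algebra but is essentially what forces the specific shape of the bound and is the only place where a reader might suspect cheating.
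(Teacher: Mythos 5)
Your proposal is correct and follows essentially the same route as the paper, which obtains this theorem precisely by combining Theorem \ref{cco1} (to get $E\equiv 0$, hence pseudo-Einstein) with Theorem \ref{dtm2} at $\sigma=n+1$ (to get $C\equiv 0$) and then invoking Lemma \ref{alm4}. Your substitution $\sigma=n+1$ reproduces the stated constant exactly, and the comparison $\tfrac{1}{\sqrt{2}}\cdot\frac{2n^2(n^2+n+2)}{3(n+1)^3(n^2-2)}\sqrt{\tfrac{n-1}{n+1}} < \frac{2n^2+2}{(n+1)^{5/2}}\sqrt{\tfrac{n+2}{2n^2+4n+3}}$ does hold for all $n\geq 2$, so the argument goes through.
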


For the negative pseudo-Hermitian scalar curvature, we have a similar result.

\begin{thm}
Let $(M, HM , J_b, \theta)$ be a complete noncompact Sasakian pseudo-Einstein manifold with negative pseudo-Hermitian scalar curvature, positive CR Yamabe constant and the dimension $2n +1 \geq 9$. Assume that
\begin{align*}
\int_{B_r} |C|^\sigma \theta \wedge (d \theta)^n = o (r^2), \quad  \mbox{as  } \ r \rightarrow + \infty,
\end{align*}
and
\begin{align*}
|| C ||_{L^{n+1} (M) } <  \frac{2 n^2}{ 3(n^2-2)} \sqrt{ \frac{n-1}{n+1} } \bigg( \sigma + \frac{2}{n+1}-1 \bigg) \sigma^{-2} \lambda (M)
\end{align*}
for some $2 \leq \sigma < \frac{1}{2(n+1)} \left( n^2 + \sqrt{n^4 - 4 n^3 + 4 n^2} \right)$.
Then the Chern-Moser tensor vanishes and $(M, HM , J_b, \theta)$ has constant negative pseudo-Hermitian sectional curvature.
\end{thm}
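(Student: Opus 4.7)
The plan is to emulate the proof of Theorem \ref{dtm2} by applying Corollary \ref{blm1} to the subelliptic inequality \eqref{d4} satisfied by $\psi = |C|$, taking $a(x) = d(x) - \tfrac{2\rho}{n}$ and $B = -\tfrac{2}{n+1}$, where $d(x) = \tfrac{3(n^2-2)}{n\sqrt{n^2-1}}|C|$. By Lemma \ref{alm2}, the Sasakian and pseudo-Einstein hypotheses force $\rho$ to be a (negative) constant, and the assumption $\sigma \geq 2$ together with $\int_{B_r}|C|^\sigma = o(r^2)$ supply the structural hypotheses of the corollary. Once $|C|\equiv 0$ is established, \eqref{c3} will give $R_{\ba\beta\lambda\bm} = \tfrac{\rho}{n(n+1)}(\delta_{\ba\beta}\delta_{\lambda\bm} + \delta_{\ba\lambda}\delta_{\beta\bm})$, i.e.\ constant pseudo-Hermitian sectional curvature $\kappa = \tfrac{\rho}{2n(n+1)} < 0$.

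The new ingredient relative to the zero-scalar-curvature case is the sign of $\rho$: I want to let the favorable $-\tfrac{2\rho}{n}|C|^2$ term in $a(x)$ absorb the bad $\tilde{C}_{\delta,\epsilon}\rho$ contribution in the corollary. With $\tilde{C}_{\delta,\epsilon} = \tfrac{2n}{n+1}\tfrac{\sigma + 2/(n+1) - 1 - \epsilon}{\sigma^2(1+\delta)}$, one writes
\[
a(x) + \tilde{C}_{\delta,\epsilon}\rho \;=\; d(x) + \rho\Big(\tilde{C}_{\delta,\epsilon} - \tfrac{2}{n}\Big).
\]
The key observation is: if $\tilde{C}_{\delta,\epsilon} \geq \tfrac{2}{n}$, then since $\rho<0$ the bracketed correction is non-positive, so $(a(x) + \tilde{C}_{\delta,\epsilon}\rho)_+ \leq d(x)$ pointwise. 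The target inequality $\|(a + \tilde{C}_{\delta,\epsilon}\rho)_+\|_{L^{n+1}} < \tilde{C}_{\delta,\epsilon}\lambda(M)$ then reduces, as $\delta,\epsilon\to 0^+$, to the pinching hypothesis via $\|d\|_{L^{n+1}} = \tfrac{3(n^2-2)}{n\sqrt{n^2-1}}\|C\|_{L^{n+1}}$.

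The main obstacle is therefore to pin down the range of $\sigma$ (and hence the dimensional lower bound) for which $\tilde{C}_{\delta,\epsilon} \geq \tfrac{2}{n}$ is achievable. Setting $\delta=\epsilon=0$, this reduces to the quadratic $(n+1)^2\sigma^2 - n^2(n+1)\sigma + n^2(n-1) \leq 0$, whose discriminant is $n^2(n+1)^2(n-2)^2$; its roots are $\tfrac{n}{n+1}$ and $\tfrac{n(n-1)}{n+1}$, the latter being exactly $\tfrac{1}{2(n+1)}\bigl(n^2 + \sqrt{n^4-4n^3+4n^2}\bigr)$, i.e.\ the upper bound on $\sigma$ in the hypothesis. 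Imposing $\sigma \geq 2$ forces $\tfrac{n(n-1)}{n+1} > 2$, equivalently $n \geq 4$, which matches $2n+1 \geq 9$. For $\sigma$ strictly below the upper root one has $\tilde{C}_{0,0} > \tfrac{2}{n}$, so by continuity of $\tilde{C}_{\delta,\epsilon}$ in $(\delta,\epsilon)$ and strictness of the pinching hypothesis, I can choose $\delta>0$ and $0<\epsilon<\sigma + \tfrac{2}{n+1}-1$ small enough to retain both $\tilde{C}_{\delta,\epsilon} \geq \tfrac{2}{n}$ and the $L^{n+1}$ inequality. Corollary \ref{blm1} then delivers $|C|\equiv 0$, completing the argument.
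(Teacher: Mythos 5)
Your proposal is correct and is exactly the argument the paper intends (the paper omits the proof, saying only that it is ``similar'' to Theorem \ref{dtm2}): apply Corollary \ref{blm1} to \eqref{d4} with $\psi=|C|$, $B=-\tfrac{2}{n+1}$, and use $\rho<0$ to absorb the $-\tfrac{2\rho}{n}$ term whenever $\tilde{C}_{\delta,\epsilon}\geq\tfrac{2}{n}$. Your analysis of the quadratic $(n+1)^2\sigma^2-n^2(n+1)\sigma+n^2(n-1)\leq 0$ correctly recovers both the upper bound $\sigma<\tfrac{n(n-1)}{n+1}=\tfrac{1}{2(n+1)}\bigl(n^2+\sqrt{n^4-4n^3+4n^2}\bigr)$ and the dimensional restriction $2n+1\geq 9$.
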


For Riemannian compact Einstein manifolds, the authors in \cite{hebey1996effectivel,shen1990some} established some $L^p$ rigidity theorems to characterize  the spheres.  For compact Sasakian pseudo-Einstein manifolds, M. Itoh and H. Satoh \cite{itoh2004isolation} gave an  $L^{n+\frac{1}{2}}$ gap condition for the Chern-Moser tensor to characterize $S^{2n+1}$. Note that their gap condition is not CR conformal invariant. Let's therefore attempt to find an $L^{n+1}$ rigidity theorem for compact Sasakian pseudo-Einstein manifolds.

\begin{thm} \label{dtm1}
Let $(M, HM , J_b, \theta)$ be a compact Sasakian pseudo-Einstein manifold with positive pseudo-Hermitian scalar curvature, positive CR Yamabe constant and the dimension $2n +1 \geq 5$. Assume that
\begin{align}
||C ||_{L^{n+1} (M)} < C_1  \lambda(M), \label{d9}
\end{align}
where
\begin{numcases}{C_1 =}
\frac{5}{9 \sqrt{3}}, & for $n=2 $, \nonumber \\
\frac{9  \sqrt{2}}{56}, & for $n=3$, \nonumber \\
\frac{2 \sqrt{n^2 -1}}{3( n^2-2 )}. & for $n \geq 4$. \nonumber
\end{numcases}
Then $C \equiv 0$ which means that $(M, HM , J_b, \theta)$ has constant positive pseudo-Hermitian sectional curvature. Furthermore, if  $M$ is simply connected, then $(M, HM , J_b, \theta)$ is D-homothetic to $S^{2n+1}$.
\end{thm}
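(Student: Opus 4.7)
The plan is to integrate the pointwise differential inequality \eqref{d4} over the closed manifold $M$, bound the cubic term by H\"older, and then substitute the CR Yamabe inequality to convert everything into a linear combination of $\|\nabla_b|C|\|_{L^2}^2$ and $\|C\|_{L^2}^2$ whose coefficients can be forced to be simultaneously non-positive under the hypothesis \eqref{d9}. Using the identity $|C|\triangle_b|C|=\frac{1}{2}\triangle_b|C|^2-|\nabla_b|C||^2$, rewrite \eqref{d4} as
\begin{equation*}
\tfrac{1}{2}\triangle_b|C|^2+\bigl(d(x)-\tfrac{2\rho}{n}\bigr)|C|^2-\tfrac{n+3}{n+1}|\nabla_b|C||^2\ge 0,
\end{equation*}
and integrate; the sub-Laplacian term drops out on the closed $M$. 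Since $d(x)=\varepsilon|C|$, H\"older's inequality with exponents $n{+}1$ and $\frac{n+1}{n}$ gives $\int_M|C|^3\le \|C\|_{L^{n+1}}\|C\|_{L^p}^2$ with $p=b_n=\frac{2(n+1)}{n}$, producing
\begin{equation*}
\varepsilon\|C\|_{L^{n+1}}\|C\|_{L^p}^2\ge \tfrac{n+3}{n+1}\|\nabla_b|C|\|_{L^2}^2+\tfrac{2\rho}{n}\|C\|_{L^2}^2.
\end{equation*}

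Next, the positivity of $\lambda(M)$ yields (from \eqref{f2} applied to the Lipschitz function $|C|$)
$\lambda(M)\|C\|_{L^p}^2\le b_n\|\nabla_b|C|\|_{L^2}^2+\rho\|C\|_{L^2}^2$. Substituting this into the previous display and setting $\gamma:=\varepsilon\|C\|_{L^{n+1}}/\lambda(M)$, I obtain the dichotomy
\begin{equation*}
\Bigl(\gamma b_n-\tfrac{n+3}{n+1}\Bigr)\|\nabla_b|C|\|_{L^2}^2+\Bigl(\gamma-\tfrac{2}{n}\Bigr)\rho\|C\|_{L^2}^2\ge 0.
\end{equation*}
Both coefficients are non-positive exactly when $\gamma\le \min\bigl\{\tfrac{n(n+3)}{2(n+1)^2},\tfrac{2}{n}\bigr\}$. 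A direct comparison shows that the first minimand is smaller when $n=2,3$ and the second when $n\ge 4$; a short arithmetic check with $\varepsilon=\tfrac{3(n^2-2)}{n\sqrt{n^2-1}}$ verifies that $C_1\varepsilon$ equals precisely this minimum in each of the three stated cases. Thus the hypothesis \eqref{d9} forces both bracketed coefficients to be strictly negative, and since $\rho>0$ we conclude $\|C\|_{L^2}=0$, hence $C\equiv 0$ by continuity.

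For the final statement, with $E=0$ (pseudo-Einstein) and $C=0$, the decomposition \eqref{c3} collapses to $R_{\bar\alpha\beta\lambda\bar\mu}=\tfrac{\rho}{n(n+1)}(\delta_{\bar\alpha\beta}\delta_{\lambda\bar\mu}+\delta_{\bar\alpha\lambda}\delta_{\beta\bar\mu})$, so $(M,HM,J_b,\theta)$ has constant positive pseudo-Hermitian sectional curvature $\kappa=\rho/(2n(n+1))$; the simply connected case then follows from Lemma \ref{alm4}. The main obstacle is not analytic — everything reduces to \eqref{d4}, H\"older, and Yamabe — but bookkeeping: one must keep track of the factor $\frac{n+3}{n+1}$ coming from the refined Kato inequality \eqref{d7} (already absorbed into \eqref{d4}) and identify which of the two thresholds $\frac{n(n+3)}{2(n+1)^2}$ and $\frac{2}{n}$ is smaller so that the piecewise constant $C_1$ emerges in its stated form.
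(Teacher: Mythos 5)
Your proposal is correct and follows essentially the same route as the paper: integrate the differential inequality for $|C|$ over the compact manifold (the paper integrates \eqref{d6} and applies the refined Kato inequality \eqref{d7}, which is equivalent to your starting from \eqref{d4} and undoing the substitution $|C|\triangle_b|C|=\tfrac{1}{2}\triangle_b|C|^2-|\nabla_b|C||^2$), then H\"older with exponents $n+1$ and $\tfrac{n+1}{n}$, then the CR Yamabe inequality, arriving at the same two-coefficient dichotomy whose minimum over $\bigl\{\tfrac{n(n+3)}{2(n+1)^2},\tfrac{2}{n}\bigr\}$ produces exactly the piecewise constant $C_1$. The concluding identification of the curvature and the appeal to Lemma \ref{alm4} also match the paper.
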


\begin{proof}
By integrating \eqref{d6} over $M$ and using \eqref{d7} and the divergence theorem, we get
\begin{align}
\frac{n+3}{n+1} || \nabla_b |C| ||_{L^2 }^2 + \frac{2 \rho}{n} ||C||_{L^2}^2 \leq \varepsilon ||C||^3_{L^3}. \label{d8}
\end{align}
Moreover, by the H\"older inequality and the assumption of positive CR Yamabe constant,
\begin{align*}
||C||^3_{L^3 } \leq ||C||_{L^{n+1} } ||C||_{L^{\frac{2n+2}{n}}}^2 \leq \frac{ ||C||_{L^{n+1} } }{\lambda (M)} \bigg( \frac{2n+2}{n} || \nabla_b |C| ||_{L^2 }^2 + \rho || C ||_{L^2 }^2  \bigg).
\end{align*}
Substituting the above inequality into \eqref{d8}, we have
\begin{align*}
\big( \frac{2 \varepsilon (n+1)}{n \lambda(M)} || C ||_{L^{n+1}} - \frac{n+3}{n+1} \big) || \nabla_b |C| ||_{L^2}^2 +  \big( \frac{\varepsilon }{\lambda (M)} || C ||_{L^{n+1}} -\frac{2}{n}  \big)\rho || C ||_{L^2}^2 \geq 0.
\end{align*}
Combining with \eqref{d9}, we conclude that $C \equiv 0$ and then $(M, HM , J_b, \theta)$ has constant positive pseudo-Hermitian sectional curvature. The rest part of this theorem is due to Lemma \ref{alm4}.
\end{proof}

Combining Theorem \ref{dtm1} with Corollary \ref{cco2}, we have

\begin{cor}
Let $(M, HM, J_b, \theta)$ be a simply-connected complete Sasakian manifold with constant positive pseudo-Hermitian scalar curvature, positive CR Yamabe constant and the dimension $2n+1 \geq 5$. Assume that
\begin{align*}
|| C ||_{L^{n+1} (M)} + \sqrt{2} \:  ||E||_{L^{n+1} (M)}  < C_1  \lambda(M),
\end{align*}
where $C_1$ is defined as Theorem \ref{dtm1}.
Then $(M, HM , J_b, \theta)$ is D-homothetic to $S^{2n+1}$.
\end{cor}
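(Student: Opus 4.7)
The plan is to derive this corollary as an immediate consequence of Corollary \ref{cco2} and Theorem \ref{dtm1}, followed by the classification in Lemma \ref{alm4}. The main task is to verify that the single pinching hypothesis $||C||_{L^{n+1}(M)} + \sqrt{2}\,||E||_{L^{n+1}(M)} < C_1 \lambda(M)$ already contains both hypotheses required downstream. Dropping the nonnegative term $\sqrt{2}\,||E||_{L^{n+1}(M)}$ immediately yields $||C||_{L^{n+1}(M)} < C_1 \lambda(M)$, which is exactly the pinching of Theorem \ref{dtm1}. For the hypothesis of Corollary \ref{cco2}, dividing the assumption through by $\sqrt{2}$ gives $\tfrac{1}{\sqrt{2}}||C||_{L^{n+1}(M)} + ||E||_{L^{n+1}(M)} < (C_1/\sqrt{2})\,\lambda(M)$, and it suffices to check the elementary numerical inequality $C_1/\sqrt{2} \leq \frac{2n^2+2}{(n+1)^{5/2}}\sqrt{\frac{n+2}{2n^2+4n+3}}$ in each of the three ranges for $C_1$ defined in Theorem \ref{dtm1} (namely $n=2$, $n=3$, and $n \geq 4$).

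Once both pinching inequalities are in force, Corollary \ref{cco2} applies and forces $(M, HM, J_b, \theta)$ to be compact and pseudo-Einstein. Then Theorem \ref{dtm1} applies to this pseudo-Einstein setting (the hypotheses on positive $\rho$, positive CR Yamabe constant, dimension $2n+1 \geq 5$, and simple connectedness are all inherited from the statement, and the required $L^{n+1}$ pinching on $C$ alone is weaker than what is assumed), so $C \equiv 0$. At that point $M$ is a simply-connected complete Sasakian manifold with constant positive pseudo-Hermitian sectional curvature, and Lemma \ref{alm4}(1) concludes that $M$ is D-homothetic to $S^{2n+1}$.

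The only point requiring actual computation is the constant comparison $C_1/\sqrt{2} \leq \frac{2n^2+2}{(n+1)^{5/2}}\sqrt{\frac{n+2}{2n^2+4n+3}}$. I do not anticipate conceptual difficulty here: the cases $n=2$ and $n=3$ are immediate arithmetic checks, while for $n \geq 4$ one has $C_1/\sqrt{2} = \frac{\sqrt{2(n^2-1)}}{3(n^2-2)}$, whose leading asymptotics $\sim \frac{\sqrt{2}}{3n}$ are smaller by a factor of roughly $3$ than the right-hand side's asymptotics $\sim \frac{\sqrt{2}}{n}$, so verifying the inequality at $n=4$ together with a monotonicity or direct dominated comparison argument closes the remaining range.
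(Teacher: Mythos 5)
Your proposal is correct and follows exactly the route the paper intends: the paper states this corollary with the one-line justification ``Combining Theorem \ref{dtm1} with Corollary \ref{cco2}'', i.e.\ first use Corollary \ref{cco2} to obtain compactness and the pseudo-Einstein property, then apply Theorem \ref{dtm1} to kill $C$, and finish with Lemma \ref{alm4}. The constant comparison $C_1/\sqrt{2} \leq \frac{2n^2+2}{(n+1)^{5/2}}\sqrt{\frac{n+2}{2n^2+4n+3}}$ that you correctly identify as the only computational point does hold in all three ranges (e.g.\ $0.227\ldots \leq 0.294\ldots$ for $n=2$, $9/56 \leq 0.243\ldots$ for $n=3$, and a comfortable margin for $n\geq 4$), so the single pinching hypothesis indeed implies both hypotheses downstream.
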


A direct consequence of \eqref{d6} and the maximum principle is the following $L^\infty$ pinching theorem.

\begin{thm} \label{dtm3}
Let $(M, HM , J_b, \theta)$ be a compact Sasakian pseudo-Einstein manifold of dimension $2n +1 \geq 5$ and with positive pseudo-Hermitian scalar curvature. Assume that
\begin{align*}
\sup_M |C| < \frac{2\sqrt{n^2 -1}}{3(n^2-2)} \: \rho .
\end{align*}
Then $C \equiv 0$ and $(M, HM , J_b, \theta)$ has constant positive pseudo-Hermitian sectional curvature. Furthermore, if $M$ is simply connected, then $(M, HM , J_b, \theta)$ is D-homothetic to $S^{2n+1}$.
\end{thm}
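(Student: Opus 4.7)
The plan is to argue by contradiction using the subelliptic differential inequality \eqref{d6} at a maximum point of $|C|^2$, which is available because $M$ is compact. Since $C$ is a smooth tensor, $|C|^2 \in C^\infty(M)$, so it attains its maximum at some point $x_0 \in M$. At $x_0$ we have $\nabla |C|^2 (x_0) = 0$ and, since the full Hessian of $|C|^2$ is negative semi-definite there, the horizontal trace $\triangle_b |C|^2 (x_0) \leq 0$. Combining this with \eqref{d6} evaluated at $x_0$ yields
\begin{align*}
0 \;\geq\; \frac{1}{2}\triangle_b |C|^2 (x_0) \;\geq\; |\nabla_b C|^2(x_0) \;-\; \varepsilon\,|C|^3(x_0) \;+\; \frac{2\rho}{n}\,|C|^2(x_0),
\end{align*}
with $\varepsilon = \frac{3(n^2-2)}{\sqrt{n^2(n^2-1)}}$.

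Suppose for contradiction that $C \not\equiv 0$. Then $|C|(x_0) > 0$, and dropping the nonnegative term $|\nabla_b C|^2(x_0)$ and dividing by $|C|^2(x_0)$ gives
\begin{align*}
|C|(x_0) \;\geq\; \frac{2\rho}{n\varepsilon} \;=\; \frac{2\rho}{n}\cdot\frac{n\sqrt{n^2-1}}{3(n^2-2)} \;=\; \frac{2\sqrt{n^2-1}}{3(n^2-2)}\,\rho.
\end{align*}
This contradicts the pinching hypothesis $\sup_M |C| < \frac{2\sqrt{n^2-1}}{3(n^2-2)}\rho$. Hence $C \equiv 0$ on $M$.

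With $C \equiv 0$ and $E \equiv 0$ (the pseudo-Einstein assumption), the decomposition \eqref{c3} reduces the Webster curvature to $R_{\ba\beta\lambda\bm} = \frac{\rho}{n(n+1)}(\delta_{\ba\beta}\delta_{\lambda\bm} + \delta_{\ba\lambda}\delta_{\beta\bm})$, so $(M,HM,J_b,\theta)$ has constant pseudo-Hermitian sectional curvature $\kappa = \frac{\rho}{2n(n+1)} > 0$. When $M$ is simply connected, Lemma \ref{alm4} then identifies $(M,HM,J_b,\theta)$ as D-homothetic to $S^{2n+1}$.

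The proof is essentially routine once \eqref{d6} is in hand, with no serious obstacle: the only subtlety is confirming that the standard maximum-principle argument (rather than Bony's version in Lemma \ref{alm3}, which requires a subsolution sign) applies to $|C|^2$ at an interior maximum on compact $M$; this is immediate by the negative semi-definiteness of the Hessian and the definition of $\triangle_b$ as the horizontal trace. The numerical constant in the pinching condition is precisely $\frac{2\rho}{n\varepsilon}$, so no further optimization or refined Kato inequality is needed.
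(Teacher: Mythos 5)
Your proposal is correct and is essentially the paper's own argument: the paper presents Theorem \ref{dtm3} as ``a direct consequence of \eqref{d6} and the maximum principle,'' and your evaluation of \eqref{d6} at an interior maximum of $|C|^2$, the resulting bound $|C|(x_0)\geq \frac{2\rho}{n\varepsilon}=\frac{2\sqrt{n^2-1}}{3(n^2-2)}\rho$, and the final appeal to the decomposition \eqref{c3} and Lemma \ref{alm4} fill in exactly the intended steps (with $\rho$ constant by Lemma \ref{alm2}). No discrepancy to report.
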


A direct calculation shows that
$$
\frac{2\sqrt{n^2 -1}}{3(n^2-2)} \leq   \sqrt{\frac{ 8(n+2)}{ (n+1) (2 n^2 + 4n +3 ) }}
$$
for $n \geq 2$. Consequently, we get from Theorem \ref{dtm3} and Theorem \ref{ctm2} that

\begin{cor}
Let $(M, HM , J_b, \theta)$ be a simply connected Sasakian manifold of dimension $2n +1 \geq 5$ and with constant positive pseudo-Hermitian scalar curvature. Assume that
\begin{align*}
\sup_M \big( |C| + \sqrt{2} |E| \big) <\frac{2\sqrt{n^2 -1}}{3(n^2-2)} \: \rho .
\end{align*}
Then $(M, HM , J_b, \theta)$ is D-homothetic to $S^{2n+1}$.
\end{cor}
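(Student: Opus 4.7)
The plan is to obtain this corollary as a direct combination of the two $L^{\infty}$ pinching theorems already established, namely Theorem \ref{ctm2} (which rules out the traceless pseudo-Hermitian Ricci tensor) and Theorem \ref{dtm3} (which then rules out the Chern-Moser tensor on a pseudo-Einstein background). The bridge between them is precisely the elementary numerical inequality
\[
\frac{2\sqrt{n^2-1}}{3(n^2-2)} \leq \sqrt{\frac{8(n+2)}{(n+1)(2n^2+4n+3)}},\qquad n\geq 2,
\]
which was already recorded just before the statement.

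First I would observe that the hypothesis $\sup_M(|C|+\sqrt{2}\,|E|) < \frac{2\sqrt{n^2-1}}{3(n^2-2)}\rho$ together with the above numerical comparison immediately yields
\[
\sup_M\bigl(\sqrt{2}\,|E|+|C|\bigr) < \sqrt{\frac{8(n+2)}{(n+1)(2n^2+4n+3)}}\,\rho.
\]
Since $(M,HM,J_b,\theta)$ is complete Sasakian with constant positive pseudo-Hermitian scalar curvature, Theorem \ref{ctm2} applies and forces $E\equiv 0$; as a byproduct, $M$ is compact.

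With $E\equiv 0$ in hand, the original hypothesis collapses to $\sup_M|C| < \frac{2\sqrt{n^2-1}}{3(n^2-2)}\rho$ on the now compact Sasakian pseudo-Einstein manifold $(M,HM,J_b,\theta)$ with positive $\rho$. This is exactly the hypothesis of Theorem \ref{dtm3}, which concludes $C\equiv 0$ and hence that $(M,HM,J_b,\theta)$ has constant positive pseudo-Hermitian sectional curvature. Since $M$ is simply connected, Lemma \ref{alm4} finally identifies $(M,HM,J_b,\theta)$ as D-homothetic to $S^{2n+1}$.

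There is no substantive obstacle here beyond verifying the constant comparison, since the heavy lifting — the Weitzenböck-based subelliptic inequalities \eqref{c18} and \eqref{d6} and the associated maximum principle arguments — is already done in Theorems \ref{ctm2} and \ref{dtm3}. The only thing to check carefully is that the smaller pinching constant $\frac{2\sqrt{n^2-1}}{3(n^2-2)}$ really does dominate the $E$-step threshold for all $n\geq 2$, which can be done by squaring both sides and simplifying.
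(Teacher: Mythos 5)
Your proposal is correct and follows exactly the route the paper takes: the numerical comparison $\frac{2\sqrt{n^2-1}}{3(n^2-2)} \leq \sqrt{\frac{8(n+2)}{(n+1)(2n^2+4n+3)}}$ lets the single pinching hypothesis feed Theorem \ref{ctm2} (giving $E\equiv 0$ and compactness) and then Theorem \ref{dtm3} (giving $C\equiv 0$), after which Lemma \ref{alm4} and simple connectedness yield the D-homothety to $S^{2n+1}$. No discrepancy with the paper's argument.
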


\section*{Acknowledgement}
We are grateful to Professor Jih-Hsin Cheng for his interesting lectures which drew our attention to Chern-Moser theory.

\bibliographystyle{plain}

\bibliography{gap}

Yuxin Dong

\emph{School of Mathematical Sciences}

\emph{Fudan University, Shanghai 200433 }

\emph{P. R. China}

yxdong@fudan.edu.cn

\vspace{12pt}

Hezi Lin

\emph{School of Mathematics and Computer Science}

\emph{Fujian Normal University, Fuzhou 350108}

\emph{P. R. China}

lhz1@fjnu.edu.cn.

\vspace{12pt}

Yibin Ren

\emph{School of Mathematical Sciences}

\emph{Fudan University, Shanghai 200433 }

\emph{P. R. China}

allenrybqqm@hotmail.com

\end{document}